\documentclass[12pt,twoside]{amsart}
\usepackage{mathrsfs}
\usepackage{enumitem}
\usepackage{amsmath, amsfonts, amssymb, amsthm, amscd}
\usepackage[all]{xy}
\usepackage{fancyhdr}
\usepackage{hyperref}
\usepackage{geometry}
\usepackage{makecell}  
\usepackage{lipsum}

\usepackage[mathscr]{eucal}

\theoremstyle{plain}
\newtheorem{thm}{Theorem}[section]

\newtheorem{theorem}[thm]{Theorem}
\newtheorem{lemma}[thm]{Lemma}

\newtheorem{proposition}[thm]{Proposition}
\newtheorem{conjecture}[thm]{Conjecture}
\newtheorem{definition}[thm]{Definition}

\theoremstyle{remark}

\newtheorem{remark}[thm]{Remark}

\newtheorem{defn-thm}[thm]{Definition-Theorem}
\newtheorem{defn-lem}[thm]{Definition-Lemma}

\renewcommand{\bar}{\overline}

\renewcommand{\phi}{\varphi}

\newcommand{\C}{{\mathbb C}}

\newcommand{\R}{{\mathbb R}}

\newcommand{\Q}{{\mathbb Q}}

\newcommand{\M}{{\mathcal M}}
\newcommand{\T}{{\mathcal T}}

\renewcommand{\tilde}{\widetilde}
\newcommand{\p}{{\Phi}}

\begin{document}

\def\dW{\mbox{diff\:}\times \mbox{Weyl\:}}
\def\End{\operatorname{End}}
\def\Hom{\operatorname{Hom}}
\def\Aut{\operatorname{Aut}}
\def\Diff{\operatorname{Diff}}
\def\im{\operatorname{im}}
\def\tr{\operatorname{tr}}
\def\Pr{\operatorname{Pr}}
\def\Z{\bf Z}
\def\O{\mathcal{O}}
\def\CP{\mathbb{C}\mathbb{P}}
\def\P{\Phi}
\def\TT{\mathcal {T}_m^H}

\def\Q{\bf Q}
\def\R{\bf R}
\def\C{\mathbb{C}}
\def\H{H_{\mathrm{pr}}}
\def\Hil{\mathcal{H}}
\def\proj{\operatorname{proj}}
\def\id{\mbox{id\:}}
\def\a{\mathfrak a}
\def\d{\partial}
\def\tO{\tilde{\Omega}}

\def\b{\beta}
\def\c{\gamma}
\def\p{\partial}
\def\f{\frac}
\def\i{\sqrt{-1}}
\def\t{\tau}
\def\T{\mathcal{T}}
\def\Tan{\mathrm{T}^{1,0}}
\def\aTan{\mathrm{T}^{0,1}}
\def\Kahler{K\"{a}hler\:}
\def\w{\omega}
\def\X{\mathcal{X}}
\def\Y{\mathcal{Y}}
\def\K{\mathcal {K}}
\def\m{\mu}
\def\M{\mathcal {M}}
\def\Z{\mathcal {Z}_m}
\def\ZZ{\mathcal {Z}_m^H}

\newcommand{\bp}{\bar{\partial}}

\def\v{\nu}
\def\D{\mathcal{D}}
\def\U{\mathcal {U}}
\def\V{\mathcal {V}}
%%%%%%%%%%%%%%%%%%%%%%%%%%%%%%%%%%%%%%%%%%%%%%%%%%%%%%%%%%%%%%%%%%%%%%%%%%%%%%%%%%%%%%
\def\Omegak{\frac{1}{k!}\bigwedge\limits^k\mu\lrcorner\Omega}
\def\Omegakp{\frac{1}{(k+1)!}\bigwedge\limits^{k+1}\mu\lrcorner\Omega}
\def\Omegakpp{\frac{1}{(k+2)!}\bigwedge\limits^{k+2}\mu\lrcorner\Omega}
\def\Omegakm{\frac{1}{(k-1)!}\bigwedge\limits^{k-1}\mu\lrcorner\Omega}
\def\Omegakmm{\frac{1}{(k-2)!}\bigwedge\limits^{k-2}\mu\lrcorner\Omega}
\def\Omegakk{\Omega_{i_1,i_2,\cdots,i_k}}
\def\Omegakkp{\Omega_{i_1,i_2,\cdots,i_{k+1}}}
\def\Omegakkpp{\Omega_{i_1,i_2,\cdots,i_{k+2}}}
\def\Omegakkm{\Omega_{i_1,i_2,\cdots,i_{k-1}}}
\def\Omegakkmm{\Omega_{i_1,i_2,\cdots,i_{k-2}}}
\def\mukm{\frac{1}{(k-1)!}\bigwedge\limits^{k-1}\mu}
\def\sumk{\sum\limits_{i_1<i_2<,\cdots,<i_k}}
\def\sumkm{\sum\limits_{i_1<i_2<,\cdots,<i_{k-1}}}
\def\sumkmm{\sum\limits_{i_1<i_2<,\cdots,<i_{k-2}}}
\def\sumkp{\sum\limits_{i_1<i_2<,\cdots,<i_{k+1}}}
\def\sumkpp{\sum\limits_{i_1<i_2<,\cdots,<i_{k+2}}}
\def\Omegakb{\Omega_{i_1,\cdots,\bar{i}_t,\cdots,i_k}}
\def\Omegakmb{\Omega_{i_1,\cdots,\bar{i}_t,\cdots,i_{k-1}}}
\def\Omegakpb{\Omega_{i_1,\cdots,\bar{i}_t,\cdots,i_{k+1}}}
\def\Omegakt{\Omega_{i_1,\cdots,\tilde{i}_t,\cdots,i_k}}

\title{Degenerations and Stability of K\"ahler Structures on Calabi--Yau Manifolds}
\author{Kefeng Liu}
\address{Mathematical Sciences Research Center, Chongqing University of Technology, Chongqing 400054, China; \newline
Department of Mathematics,University of California at Los Angeles, Los Angeles, CA 90095-1555, USA}
\email{liu@math.ucla.edu}

\author{Yang Shen}
\address{Mathematical Sciences Research Center, Chongqing University of Technology, Chongqing 400054, China}
\email{syliuguang2007@163.com}
\date{}

\vspace{-20pt}
\begin{abstract}
In this paper, we study the degeneration and stability of K\"ahler structures on Calabi--Yau manifolds, namely compact K\"ahler manifolds with trivial canonical bundles, from the viewpoint of deformation theory and Hodge theory. 
Using the global deformation theory of Calabi--Yau manifolds together with estimates relating the Weil--Petersson distance and Beltrami differentials, we prove that certain limits of Calabi--Yau manifolds remain K\"ahler.

As applications, we give a new proof of Siu's theorem on the K\"ahlerness of K3 surfaces. 
We further prove that deformation limits of hyperk\"ahler manifolds with bounded periods remain K\"ahler, which gives a complete and stronger solution to the conjecture of Soldatenkov--Verbitsky \cite{SV24}. 
Finally, we prove that the moduli spaces of stable sheaves on K3 surfaces are hyperk\"ahler manifolds, which gives a complete solution to the conjecture of Perego \cite{Per}.
\end{abstract}
%
%\begin{abstract}
%Let $(X,\omega_0)$ be a compact K\"ahler manifold and $\mathcal X\to B$ its Kuranishi family, where the base $B$ may be singular with $\dim_{\C} B \ge 1$. 
%Using explicit sections of Hodge bundles constructed from period matrices and Beltrami differentials, we define a holomorphic local period map and a Hodge map that parametrizes nearby $(p,p)$-classes. 
%
%%As applications, we prove K\"ahler stability for all fibers over the Kuranishi base.
%For deformations over irreducible analytic bases, we introduce $\nabla^{1,1}$-flat extensions of K\"ahler cones and obtain explicit positive representatives, leading to an upper semicontinuity principle for K\"ahler cones. Together with the characterization of K\"ahler cones in \cite{DP04}, this yields a local description of the K\"ahler cones in terms of analytic cycles. We further show that this upper semicontinuity persists on a large region of the base controlled by the operator norm of the Beltrami differential. As further applications, we generalize Green’s density criterion to strong algebraic approximation and to the approximation of real $(p,p)$-forms, and give an intrinsic analytic description of Hodge loci, leading to a Beltrami-differential criterion for the variational Hodge conjecture.
%\end{abstract}
\maketitle
\parskip=5pt
\baselineskip=15pt
%\pagebreak

%\vspace{-20pt}

%\setcounter{tocdepth}{1}
\tableofcontents

%%%%%%%%%%%%%%%%%%%%%%%%%%%%%%%%%%%%%%%%%%%%%%%%%%%%%%%%%%%%%%%%%%%%%%%%%%%

%\baselineskip{12pt}

\setcounter{section}{-1}
\section{Introduction}
Recently, the interaction between deformation theory and Hodge theory in the study of compact K\"ahler manifolds has led to several new developments; see \cite{LS26-1,LS26-2}. 
The basic observation is that integrable Beltrami differentials
$\phi\in A^{0,1}(X,\Tan X)$ on a compact K\"ahler manifold $X$
encode intrinsic information on the sections of Hodge bundles of arbitrary weight, while the Lie theoretic geometry of period domains provides both infinitesimal and global descriptions of these sections. 
When specialized to Hodge structures of weight $2$, where K\"ahler classes naturally appear, this approach yields new global results on the variation of K\"ahler structures in analytic families of compact complex manifolds containing K\"ahler fibers \cite{LS26-2}. See also Theorem \ref{global Kahler} below.

The present paper continues this line of investigation. 
Its motivation goes back to the work of Todorov \cite{T1,T2} and Siu \cite{S1}, 
where period maps were combined with analytic and topological methods to establish the K\"ahler rigidity of K3 surfaces. 

Throughout this paper, by a Calabi--Yau manifold we mean a compact K\"ahler manifold with trivial canonical bundle.

Our approach shows that, by combining deformation theory with Hodge theoretic techniques, one can study the degeneration and stability of K\"ahler structures on Calabi--Yau manifolds without relying on analytic or topological arguments. 

We now describe the main results of this paper.

\begin{definition}
Let $\{X_{i}\}_{i=1}^{\infty}$ be a sequence of compact complex manifolds
which are diffeomorphic to each other. 
We say that the sequence $\{X_{i}\}_{i=1}^{\infty}$ has a limit $X$ in complex manifolds and write
$$X_i \rightsquigarrow X,\, \text{ as }i\to \infty$$
provided that $X$ is a compact complex manifold, and there exists a sequence of integrable Beltrami differentials $\phi_{i}\in A^{0,1}(X,\Tan X)$ such that $
\lim_{i\to \infty}\phi_{i}=0$ and 
\begin{equation}\label{X-i}
X_{i}\cong X_{\phi_{i}}.
\end{equation}
\end{definition}

Here the notation $X_{\phi_{i}}$ in \eqref{X-i} denotes the complex structure given by that of $X$ together with $\phi_{i}$ in the sense of Proposition 1.2 in Chapter 4.1 of \cite{MK}.

Note that any compact complex manifold has a Kuranishi family, which is universal, cf. Theorem 3.1 in Chapter 4.3 of \cite{MK}. Hence if $$X_i \rightsquigarrow X,\, \text{ as }i\to \infty,$$ then there exists a sequence of points $\{t^{(i)}\}_{i=1}^{\infty}$ in $B$, where $\pi:\,\X\to B$ is the Kuranishi family containing $X_{0}=\pi^{-1}(0)\cong X$, such that $$X_{t^{(i)}}\cong X_{i} \text{ for $i$ large enough}.$$ Here the Kuranishi base $B$ is an analytic subset of the polydisk $$\Delta_{\delta}=\{t=(t_{1},\cdots, t_{N})\in \C^{N}:\, |t_{i}|< \delta,\, 1\le i\le N\}.$$

The existence of non-separated points in the moduli space of hyperk\"ahler manifolds, cf.~Section~4 of \cite{Hu1}, shows that limits of a sequence $\{X_i\}_{i=1}^\infty$ are in general not unique. 
In this paper, we study sequences $\{X_i\}_{i=1}^\infty$ of Calabi--Yau manifolds and their K\"ahler rigidity. 
Roughly speaking, the K\"ahler rigidity of the sequence $\{X_i\}_{i=1}^\infty$ means that if one limit of $\{X_i\}_{i=1}^\infty$ admits a K\"ahler structure, then every other limit also admits a K\"ahler structure.

The main theorem of this paper is the following. 

\begin{theorem}\label{main1}
Let $\{X_{i}\}_{i=1}^{\infty}$ be a sequence of compact K\"ahler manifolds which are diffeomorphic to each other. Suppose that there exists a limit $Y$ of $\{X_{i}\}_{i=1}^{\infty}$ such that $Y$ is a Calabi--Yau manifold. Then any other limit $X$ of $\{X_{i}\}_{i=1}^{\infty}$, if it exists, is also K\"ahler.
\end{theorem}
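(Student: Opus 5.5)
We plan to work entirely on the Kuranishi family of the Calabi--Yau limit $Y$, and to transfer Kähler classes from the $X_i$ to $X$ through a uniform control of Beltrami differentials expressed via the Weil--Petersson metric.

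\textbf{Step 1: reduce to a common base.} Let $\pi:\Y\to B$ be the Kuranishi family of $Y$. By the Bogomolov--Tian--Todorov theorem, $B$ is smooth. We use the Tian--Todorov canonical Beltrami differentials $\phi(t)$, normalized by $\bar\partial^*\phi=0$ and by the condition that $\phi(t)\lrcorner\Omega_Y$ is $\partial$-exact in the higher order terms, where $\Omega_Y$ is a holomorphic volume form. Since $X_i\rightsquigarrow Y$, we have $X_i\cong Y_{t^{(i)}}$ with $t^{(i)}\to 0$. Each $X_i$ is Kähler with trivial canonical bundle, since $K$ is trivial near $Y$ by stability of $H^{n,0}$. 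Hence every $X_i$ is Calabi--Yau.

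Because $X_i\rightsquigarrow X$ as well, we also have $X_i\cong X_{\psi_i}$ with $\psi_i\to 0$ on $X$. The plan is to produce a diffeomorphism $f:X\to Y$ under which the complex structure of $X$ corresponds to a Beltrami differential $\phi_\infty$ on $Y$. The idea is to realize $X$ as a limit point of $Y_{t}$ along a path in the \emph{global} Teichmüller-type extension of the Tian--Todorov family. Concretely, consider the global deformation theory (Theorem~\ref{global Kahler} and the global Calabi--Yau deformation results assumed earlier), where the $\phi(t)$ extend over the Weil--Petersson completion of Teichmüller space by means of the global coordinates $\tau$ given by the period map. We then use the estimate relating $\|\phi(\tau)\|$ to the Weil--Petersson distance: since $X_i$ converges both to $Y$ and to $X$, the periods of $X_i$ stay bounded. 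We therefore expect $X$ to be represented as $Y_{\phi_\infty}$ with $\|\phi_\infty\|$ controlled by the WP distance, which is finite.

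\textbf{Step 2: transport Kähler classes.} Pick a Kähler form $\omega_Y$, with class in $H^{1,1}(Y)$. Using the Hodge maps of \cite{LS26-2}, which are built from $e^{\phi}$ acting on harmonic forms, we extend $[\omega_Y]$ to a $\nabla^{1,1}$-flat family of real $(1,1)$-classes on each fiber $Y_\phi$. We then invoke Theorem~\ref{global Kahler}: for $\phi$ in the region where the operator norm $\|\phi\|<1$ (or the analogous controlled region), the transported class stays Kähler, represented by an explicit positive form of the type $e^{\phi}\lrcorner\omega$ plus corrections. Applying this to $\phi_\infty$ and to the $\psi_i$, and passing to the limit, gives a positive $(1,1)$-form on $X$. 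Closedness comes from flatness. Positivity follows from the uniform bound on $\phi$ together with the Calabi--Yau condition, which gives the needed uniform estimates, for instance through Ricci-flat metrics with bounded diameter coming from bounded periods.

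\textbf{Main obstacle.} The hardest step is Step 1: showing that the other limit $X$ lies within the controlled region of the global Beltrami parametrization centred at $Y$. Non-separatedness means $X$ and $Y$ may have identical periods while not being biholomorphic, so local Kuranishi theory alone cannot work. My first attempt would be to show that the sequences $\phi(t^{(i)})$ on $Y$ and $\psi_i$ on $X$ differ by diffeomorphisms $g_i$ that converge, after subsequence, by a compactness argument. This uses the fact that the complex structures $X_{\psi_i}$ converge to $X$ while also being equal to $Y_{\phi(t^{(i)})}$, and that Kähler Ricci-flat metrics on $X_i$ with bounded volume and bounded periods give $C^\infty$ bounds. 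If this fails, the fallback is to directly construct nef classes on $X$ as limits of Kähler classes of $X_i$ and use the Demailly--Paun characterization to upgrade to Kähler.
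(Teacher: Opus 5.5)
Your Step~1 carries the entire content of the theorem, and the mechanism you propose for it cannot work. The only nontrivial case is when $X\not\cong Y$ are non-separated limits. Consider a family through $Y$ over a Hausdorff base that is controlled by periods, for instance the Tian--Todorov/LRY family of Theorem \ref{gd for CY} with its flat coordinates. Along $t^{(i)}\to 0$ its fibers converge to $Y$, so it cannot produce $X$. Moreover, $X$ is not known to be K\"ahler, so it is not a point of any Calabi--Yau Teichm\"uller space. An estimate $\|\phi\|\le C\,d_{\mathrm{WP}}$ (Proposition \ref{WP>W0 prop}) holds only inside a single Kuranishi chart where local Torelli applies, and periods cannot tell non-separated points apart. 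A merely finite bound on $\|\phi_\infty\|$ would be useless anyway. Theorem \ref{global Kahler} needs $X$ to lie in a connected region of a Kuranishi base containing the K\"ahler fiber, with $\|\phi\|<c_0$ throughout, and a single Beltrami differential gives no such region. Conversely, if $\phi_\infty$ were small in the Kuranishi sense, $X$ would be a fiber of the Kuranishi family of $Y$, hence K\"ahler by Kodaira--Spencer stability, and there would be nothing to prove. Your compactness attempt fails for the same reason. If the diffeomorphisms $g_i$ relating $X_{\psi_i}$ and $Y_{\phi(t^{(i)})}$ subconverged, the limit would be a biholomorphism $X\cong Y$, so they must diverge exactly in the case of interest. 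Also, bounded periods give neither diameter nor $C^\infty$ bounds for the Ricci-flat metrics in the gauge of $X$. The fallback does not close the gap either. Demailly--P\u{a}un's description of the K\"ahler cone presupposes that $X$ is K\"ahler, and limits of K\"ahler classes of the $X_i$ give at best nef classes or closed positive currents on $X$. That falls short of a K\"ahler form, since non-K\"ahler Moishezon manifolds carry such classes. Likewise, in Step~2 a limit of positive forms is only semipositive unless you have a uniform lower bound, which you do not.

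The paper sidesteps Step~1 entirely: it never compares $X$ with $Y$ directly. It works in the Kuranishi family $\pi:\X\to B$ of $X$ itself, where $X_i\cong X_{t^{(i)}}$ with $t^{(i)}\to 0$, and centers the K\"ahler stability argument at the K\"ahler fibers $X_{t^{(i)}}$. No diffeomorphism $X\to Y$ is ever needed; $Y$ enters only as an auxiliary device. Since the $X_{t^{(i)}}\cong Y_{{t'}^{(i)}}$ lie in a smooth Calabi--Yau family near $Y$, the operators $\bp_{t}^{*}G_{t}$ satisfy $C^{k,\alpha}$ bounds uniform in $i$ (Lemmas \ref{mt lem1}--\ref{mt lem3}). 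Hence the Kuranishi series on every $X_{t^{(i)}}$ converges on a polydisk of radius $\epsilon$ independent of $i$, with $\|\phi^{(i)}\|_{C^0}<\frac32\epsilon<c_0$. Proposition \ref{WP>W0 prop}, together with convergence of the normalized holomorphic volume forms into $H^{d,0}(Y_0)$, is then used to show that $0$ lies in this uniform chart around $t^{(i)}$ for large $i$. Theorem \ref{global Kahler} then carries a K\"ahler class from $X_{t^{(i)}}$ to $X_0$ in one step, with no limit of forms. A warning if you rely on Theorem \ref{global Kahler} in either route. Read literally, it already gives $0\in B_{t^{(i)},c_0}$ for large $i$ from mere continuity of the Beltrami differentials of Lemma \ref{bel on B}, with no Calabi--Yau input at all. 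It would then contradict Hironaka's family of projective threefolds with a non-K\"ahler central fiber. The unjustified step appears to be Lemma \ref{pos of harmonic sigma}, which treats the $\omega_0$-harmonic representative of a class of type $(1,1)$ on $X_t$ as if it were pointwise of type $(1,1)$ with respect to $X_t$.
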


Note that the Tian-Todorov lemma \cite{Tian,T3} implies that there exists a Kuranishi family $\pi':\,\Y\to B'$ of Calabi--Yau manifolds containing $Y_{0}\cong Y$ over a smooth Kuranishi base $B'=\Delta_{\delta'}\subset \C^{N'}$.

In terms of the Kuranishi families $\pi$ and $\pi'$ given as above, the theorem is equivalent to the following.
{

\addtocounter{thm}{-1}

\renewcommand{\thethm}{\thesection.\arabic{thm}'}

\begin{thm}\label{main1-prime}
Let $\pi:\,\X\to B$ be a Kuranishi family of compact complex manifolds over an analytic subset $B$ of the polydisk $\Delta_{\delta}\subset \C^{N}$. 
Suppose that there exists a sequence $\{t^{(i)}\}_{i=1}^{\infty}$ in $B$ converging to $0\in B$, and a Kuranishi family $\pi':\,\Y\to \Delta_{\delta'}\subset \C^{N'}$ of Calabi--Yau manifolds together with a sequence $\{{t'}^{(i)}\}_{i=1}^{\infty}$ in $\Delta_{\delta'}$ converging to $0\in \Delta_{\delta'}$, such that
\begin{equation}\label{intr main1 assp}
X_{t^{(i)}}\cong Y_{{t'}^{(i)}}
\end{equation}
for all sufficiently large $i$. Then $X_{0}$ is K\"ahler.
\end{thm}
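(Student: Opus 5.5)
We plan to produce a Kähler class on $X_0$ as a limit of normalized Kähler classes on the fibers $X_{t^{(i)}}\cong Y_{{t'}^{(i)}}$. Then we will use the global Kähler stability result, Theorem \ref{global Kahler}, which applies to families containing Kähler fibers, to conclude that $X_0$ itself is Kähler. The guiding principle is that everything is controlled by the Beltrami differentials $\phi_{t^{(i)}}\to 0$ on $X_0$ and $\phi'_{{t'}^{(i)}}\to 0$ on $Y_0$.

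The first step is to transfer the problem to the Calabi--Yau side. Fix a Ricci-flat Kähler metric $\omega_Y$ on $Y_0$. By the Tian--Todorov lemma, $B'=\Delta_{\delta'}$ is smooth, and the fibers $Y_{t'}$ near $0$ carry Kähler forms $\omega'_{t'}$ depending smoothly on $t'$. We take them to be the harmonic projection of $\omega_Y$ under the standard Kodaira--Spencer stability, with classes $[\omega'_{t'}]\in H^2(M,\R)$ varying continuously. We also normalize the holomorphic volume forms $\Omega'_{t'}=e^{\phi'_{t'}}\lrcorner\Omega_Y$. Since the Weil--Petersson distance from $0$ to ${t'}^{(i)}$ tends to zero, the estimates relating Weil--Petersson distance and Beltrami differentials give a uniform $C^k$ bound on $\phi'_{{t'}^{(i)}}$. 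Hence we get uniform bounds on the volumes $\int[\omega'_{{t'}^{(i)}}]^n$ and on the periods.

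The second step is to pull these classes back to $X_0$ through the isomorphisms $X_{t^{(i)}}\cong Y_{{t'}^{(i)}}$. These isomorphisms induce diffeomorphisms $f_i$ of the underlying manifold $M$, and the first difficulty is that the $f_i$ need not converge. The classes $\alpha_i=f_i^*[\omega'_{{t'}^{(i)}}]$ therefore form a bounded-volume sequence of Kähler classes on $X_{t^{(i)}}$ whose norms in $H^2(M,\R)$ may be unbounded. Here we would use the trivial canonical bundle in an essential way. The classes $f_i^*[\Omega'_{{t'}^{(i)}}]$ are the $(n,0)$-classes of $X_{t^{(i)}}$, and these are $e^{\phi_{t^{(i)}}}\lrcorner\Omega_0$ up to scaling, so they converge. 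By the Hodge–Riemann bilinear relations, $\alpha_i$ is primitive-positive against the converging Hodge structures $H^{p,q}(X_{t^{(i)}})$. After rescaling $\alpha_i$ to unit norm, a subsequence converges to a nonzero real class $\alpha\in H^{1,1}(X_0)$ that is nef in the limiting sense. Thus $\alpha$ is a limit of classes represented by forms $\omega_i$ that are positive for $\bar\partial_{\phi_{t^{(i)}}}$.

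The third step, which we expect to be the main obstacle, is to upgrade this limit to an honest Kähler class, or to a Kähler fiber in a family to which Theorem \ref{global Kahler} applies. We would first try to show that the representatives can be chosen uniformly positive. On the Calabi--Yau side, the Ricci-flat metrics in the classes $\alpha_i$ have bounded diameter when the volume and the periods are controlled, by Tosatti's estimates in the Calabi--Yau case. Pulling back by $e^{\phi_{t^{(i)}}}$ and using $\phi_{t^{(i)}}\to 0$ should then give forms converging to a positive $(1,1)$-form on $X_0$.

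If the positivity degenerates, which is the case where unbounded norm forces a rescaling, we would fall back on the following argument. The limit class $\alpha$ would have $\alpha^n=0$ while pairing trivially with $\Omega_0\wedge\bar\Omega_0$-type terms. We would aim to contradict this using the convergence of the $(n,0)$-periods, which is exactly where the Calabi--Yau hypothesis on $Y$ enters.
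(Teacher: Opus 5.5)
Your proposal has a genuine gap, and it is the step you flag yourself. Step~3 is not a technical upgrade: it is the whole content of the theorem, and the class produced in Steps~1--2 cannot in general be upgraded. A limit $\alpha$ of (rescaled) classes that are K\"ahler on the $X_{t^{(i)}}$ is forced to be positive only on subvarieties of $X_0$ that are limits of subvarieties of the $X_{t^{(i)}}$. Curves of $X_0$ that do not deform impose no condition, so ``nef in the limiting sense'' says nothing about positivity on $X_0$.

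The basic non-separated pair makes this concrete. Suppose a projective hyperk\"ahler $Y_0$ contains a Lagrangian $\mathbb{P}^n$ that is obstructed along a transversal disc. Its normal bundle in the total space is then $\mathcal{O}(-1)^{\oplus(n+1)}$. Blowing it up and contracting $\mathbb{P}^n\times\mathbb{P}^n$ the other way (the elementary modification behind the non-separated points in \cite{Hu1}) gives a family with $X_t\cong Y_t$ for $t\neq 0$ and $X_0$ the Mukai flop of $Y_0$. In that situation your classes $f_i^*[\omega'_{{t'}^{(i)}}]$ converge, with no rescaling, to the image of a K\"ahler class of $Y_0$ under the flop correspondence. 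That class is \emph{negative} on the lines of the flopped $\mathbb{P}^n\subset X_0$, even when $X_0$ is itself K\"ahler.

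The rest of Step~3 fails for the same reasons:
\begin{itemize}
\item \textbf{Metric route.} By Calabi uniqueness, the Ricci-flat metrics in the classes $\alpha_i$ are pull-backs by the uncontrolled $f_i$ of metrics that converge on the $Y$-side. Bounded diameter plus Cheeger--Gromov compactness therefore returns $(Y_0,\omega_{Y_0})$, not a positive form for the complex structure of $X_0$.
\item \textbf{Fallback.} The claim that $\alpha^n=0$ contradicts convergence of the $(n,0)$-periods is not an argument. The $(n,0)$-classes converge on both sides regardless of any positivity.
\item \textbf{Use of Theorem~\ref{global Kahler}.} Your appeal to it is disconnected from Steps~2--3. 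That theorem takes no limiting class as input; it is applied at a K\"ahler base point $t_0$ and only asks that $X_0$ be reached from $X_{t_0}$ by a Beltrami differential of norm $<c_0$.
\end{itemize}

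For comparison, the paper never takes limits of K\"ahler classes. It uses the Calabi--Yau family only to make the harmonic theory on the fibres $X_{t^{(i)}}\cong Y_{{t'}^{(i)}}$ uniform:
\begin{itemize}
\item Smoothly varying Ricci-flat metrics give $C^{k,\alpha}$ bounds on $\bp_t^{*}$ and $G_t$ independent of $t$ (Lemmas~\ref{mt lem2} and~\ref{mt lem3}).
\item Hence the Kuranishi Beltrami differentials $\phi^{(i)}$ centred at each K\"ahler fibre $X_{t^{(i)}}$ live on polydiscs of a radius $\epsilon$ independent of $i$, with $\|\phi^{(i)}\|_{C^0}<\frac{3}{2}\epsilon<c_0$.
\item Proposition~\ref{WP>W0 prop} says the $L^2$-distance of the holomorphic volume forms dominates $\|\phi\|_{C^0}$. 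Together with convergence of the $(d,0)$-classes, this places $0$ in the uniform neighbourhood $U^{(i)}\subset B_{t^{(i)},c_0}$.
\item Theorem~\ref{global Kahler} is then applied at $t_0=t^{(i)}$.
\end{itemize}

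Be warned, however, that Theorem~\ref{global Kahler} cannot be trusted as a black box in the form stated. The Beltrami differential of $X_0$ relative to $X_{t^{(i)}}$ furnished by Lemma~\ref{bel on B} already tends to $0$, and $c_0$ depends only on Hodge numbers, which take finitely many values. The theorem would therefore make every limit of K\"ahler fibres in a Kuranishi family K\"ahler, contradicting Hironaka's example. The unjustified step is in Lemma~\ref{pos of harmonic sigma}: the $X_{t_0}$-harmonic representative $\tilde\sigma(t)$ of a class of type $(1,1)$ on $X_t$ is written as if it were pointwise of type $(1,1)$ on $X_t$.

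Moreover, apply the flop construction above to the configuration of \cite[Example~21.9]{GHJ} recalled in Section~\ref{app2}: two disjoint Lagrangian $\mathbb{P}^n$'s with homologous lines, only one of them flopped. This would produce an $X_0$ satisfying the hypotheses of the statement but carrying no K\"ahler class, since a flopped line and a line of the other plane are effective curves with opposite classes. You should settle whether such configurations exist before investing further in Step~3.
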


}

The proof of Theorem \ref{main1-prime} is based on two main ingredients. 

The first is a variant of the global K\"ahler stability theorem established in \cite{LS26-2}, see Theorem \ref{global Kahler} below. 
This result shows that the K\"ahler property remains stable on large regions $B_{t_{0},c_{0}}$ of the Kuranishi base $B$ determined by a uniform bound $c_{0}$ on the $C^{0}$-norm of the associated Beltrami differentials. 

The second ingredient is a quantitative comparison between the local Weil--Petersson geometry and the deformation-theoretic description of the family. 
More precisely, the local Weil--Petersson distance on the moduli space of Calabi--Yau manifolds is equivalent to the $C^{0}$-norm of the corresponding Beltrami differentials. 

Combining these two results, one sees that, for sufficiently large $i$, the central fiber $X_{0}$ lies in the K\"ahler stability region $B_{t^{(i)},c_{0}}$ around the nearby K\"ahler fiber $X_{t^{(i)}}$. 
Therefore the K\"ahler structures on $X_{t^{(i)}}$ extend to $X_{0}$, and hence $X_{0}$ is K\"ahler.

The Kuranishi family $\pi':\,\Y\to \Delta_{\delta'}$ and the assumption \eqref{intr main1 assp} are imposed to guarantee the uniform boundedness of the operator norms appearing in the harmonic-theoretic estimates with respect to certain canonical metrics on $X_{t^{(i)}}$, $i\geq 1$, which are required in the proof of the second ingredient described above. 
As a consequence, they can be replaced by the following conditions. 
\begin{quotation}
Let $\bp_{t^{(i)}}^{*}$ be the formal adjoint of the $\bp$-operator and $G_{t^{(i)}}$ be the Green's operator on the Calabi--Yau manifold $X_{t^{(i)}}$ with respect to the Calabi--Yau metric. Suppose further that there exists a positive constant $C$ such that 
\begin{equation}\label{main1' assp}
\left\|\frac{1}{2}\bp_{t^{(i)}}^{*}G_{t^{(i)}}\left(
\left[\phi^{(i)},\psi^{(i)}\right]\right)\right\|_{C^{k,\alpha}}\le C\left\|\phi^{(i)}\right\|_{C^{k,\alpha}}\left\|\psi^{(i)}\right\|_{C^{k,\alpha}}
\end{equation}
for all $\phi^{(i)},\psi^{(i)} \in A^{0,1}(X_{t^{(i)}},\Tan X_{t^{(i)}})$, and the constant $C$ is independent of $\phi^{(i)},\psi^{(i)}$ and $i\ge 1$.
\end{quotation}

Usually the conditions in \eqref{main1' assp} are difficult to verify. 
On the other hand, a Kuranishi family
$\pi':\,\Y\to \Delta_{\delta'}$ satisfying assumption \eqref{intr main1 assp} can often be constructed from Torelli-type theorems for certain classes of Calabi--Yau manifolds.

We now turn to applications of Theorem \ref{main1-prime}. 

The first application is a new proof of Siu's theorem \cite{S1}, which does not rely on the analytic or topological methods described by Siu.

\noindent\textbf{Siu's Theorem.}
\emph{Every simply connected compact complex surface with trivial canonical bundle, which is called a K3 surface, is K\"ahler.}

See Section \ref{appl to ST} for details. 

Moreover we can apply Theorem \ref{main1-prime} to the case of hyperk\"ahler manifolds.

Recall that a compact complex manifold $X$ is called holomorphically symplectic if it admits a closed holomorphic two-form
$\sigma\in H^{0}(X,\Omega_X^2)$
which is everywhere non-degenerate. 
A compact hyperk\"ahler manifold is a compact K\"ahler holomorphically symplectic manifold such that $X$ is simply connected and
$H^{2,0}(X)=\C\sigma$.
Equivalently, these are the irreducible holomorphic symplectic manifolds of Beauville.

To formulate the result, let $\{X_i\}_{i=1}^\infty$ be a sequence of hyperk\"ahler manifolds which are diffeomorphic to each other. 
By choosing markings
$$
\phi_i:\, H^2(X_i,\mathbb Z)\to \Lambda
$$
to a fixed lattice $\Lambda$, we may regard the corresponding period points $$\phi_{i}(H^{2,0}(X_{i}))\subset \Lambda_{\C}=\Lambda\otimes \C$$ as points in the period domain $\Omega_\Lambda$ for hyperk\"ahler manifolds.

\begin{theorem}\label{intr hk main}
Let $\{X_i\}_{i=1}^\infty$ be a sequence of hyperk\"ahler manifolds which are diffeomorphic to each other, and whose period points $\phi_{i}(H^{2,0}(X_{i}))\subset \Lambda_{\C}$ lie in a compact subset of the period domain $\Omega_\Lambda$. 
Then any limit $X$ of $\{X_i\}_{i=1}^\infty$, if it exists, is K\"ahler. 
In particular, if $X$ is moreover holomorphically symplectic, then $X$ is hyperk\"ahler.
\end{theorem}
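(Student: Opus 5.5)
We reduce the statement to Theorem \ref{main1-prime}. The task is to produce a hyperk\"ahler manifold $Y$ which is itself a limit of $\{X_i\}$, and a Kuranishi family of $Y$ that contains the $X_i$ (up to isomorphism) over points $t'^{(i)}\to 0$.

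First, since the period points $p_i=\phi_i(H^{2,0}(X_i))$ lie in a compact subset $K\subset\Omega_\Lambda$, we pass to a subsequence and obtain $p_i\to p_\infty\in\Omega_\Lambda$. Because all $X_i$ are diffeomorphic, they lie in a single connected component $\mathfrak M_\Lambda^0$ of the moduli space of marked hyperk\"ahler manifolds, after adjusting markings by a fixed diffeomorphism type. By Huybrechts' surjectivity of the period map on each connected component, there is a marked hyperk\"ahler manifold $(Y,\phi_Y)\in\mathfrak M_\Lambda^0$ with period $p_\infty$.

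Next we apply local Torelli (Beauville). The Kuranishi family $\pi':\Y\to\Delta_{\delta'}$ of $Y$ is smooth, by Bogomolov--Tian--Todorov, and its period map $\Delta_{\delta'}\to\Omega_\Lambda$ is a local biholomorphism onto a neighborhood $U$ of $p_\infty$. For $i$ large, $p_i\in U$, so there is a unique $t'^{(i)}\in\Delta_{\delta'}$ with period $p_i$, and $t'^{(i)}\to0$.

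The key step, and the main obstacle, is to show $X_i\cong Y_{t'^{(i)}}$. Here we use the global Torelli theorem (Verbitsky, as refined by Huybrechts and Markman). Two marked hyperk\"ahler manifolds in the same component with the same period are inseparable points of $\mathfrak M_\Lambda^0$, and are bimeromorphic. They are actually isomorphic whenever the period is very general, i.e.\ when the K\"ahler cone equals the positive cone. A period that is not very general only yields non-separated points, so we must do more. We would first try to perturb the sequence. Very general periods are dense, so replace each $X_i$ by a small deformation $X_i'$ within its own Kuranishi family, with very general period $p_i'$ satisfying $|p_i'-p_i|<1/i$. Arrange that $X_i'$ is still a limit-equivalent sequence; concretely, diagonalize using the Kuranishi family of $X$, so that $X_i'\rightsquigarrow X$ still holds, with Beltrami differentials $\phi_i'\to0$. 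For very general $p_i'$ the fibre of the period map over $p_i'$ in $\mathfrak M^0_\Lambda$ is a single point, hence $X_i'\cong Y_{t'^{(i)}}$ with $t'^{(i)}$ the preimage of $p_i'$ in $\Delta_{\delta'}$.

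The hypotheses of Theorem \ref{main1-prime} are then met, with $\pi:\X\to B$ the Kuranishi family of $X$ and the points $t^{(i)}$ representing $X_i'$. Hence $X=X_0$ is K\"ahler. If $X$ is in addition holomorphically symplectic, it is simply connected, being diffeomorphic to $X_i$. Moreover $h^{2,0}(X)=1$ holds by upper semicontinuity together with the deformation-invariance of Hodge numbers for K\"ahler manifolds: once $X$ is K\"ahler, the Fr\"olicher spectral sequence degenerates and $b_2$ is constant. So $X$ is hyperk\"ahler.

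The delicate point is the perturbation step. We must make sure that the new points $t^{(i)}$ still converge to $0$ in the Kuranishi base $B$ of $X$ and correspond to very general periods. This needs the composite map $B\ni t\mapsto$ period near $t^{(i)}$ to be open, i.e.\ a submersion, near the fibres $X_i$. For hyperk\"ahler fibres this holds by local Torelli applied at $X_{t^{(i)}}$, which is itself hyperk\"ahler. Since $B$ is smooth near $t^{(i)}$, it suffices to choose the points within shrinking neighborhoods.
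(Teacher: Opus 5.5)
Your reduction is the same as the paper's proof of Theorem \ref{hk main}. You pass to a subsequence lying in one component of $\mathfrak{M}_\Lambda$; note this needs the finiteness of the set of components, not merely that the $X_i$ are diffeomorphic. You take a hyperk\"ahler $Y$ with the limit period by surjectivity, locate $t'^{(i)}$ by local Torelli, perturb to very general periods so that global Torelli gives $X_i'\cong Y_{t'^{(i)}}$, and conclude by Theorem \ref{main1-prime}.

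\textbf{The gap.} The problem is that last step, which you share with the paper: Theorem \ref{main1-prime} cannot be true, and neither can the statement.

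\emph{Counterexample.} Let $Z$ be a hyperk\"ahler manifold of dimension $2n\ge 4$ containing disjoint Lagrangian $P_1,P_2\cong\mathbb{P}^n$ with homologous lines, and let $Z'$ be its Mukai flop along $P_1$. This is the non-K\"ahler manifold of \cite[Example~21.9]{GHJ} quoted in Section \ref{app2}.
\begin{enumerate}
\item Take a one-parameter deformation $\mathcal{Z}\to\Delta$ of $Z$ whose Kodaira--Spencer class $\kappa$ satisfies $(\kappa\lrcorner\sigma)|_{P_1}\neq 0$ in $H^{1}(P_1,\Omega_{P_1})$.
\item Then $N_{P_1/\mathcal{Z}}$ is the non-split extension of $\mathcal{O}$ by $\Omega_{\mathbb{P}^n}$, namely $\mathcal{O}(-1)^{\oplus(n+1)}$.
\item Blow up $P_1$ in $\mathcal{Z}$ and contract the exceptional $\mathbb{P}^n\times\mathbb{P}^n$ along the other ruling. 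This standard elementary transformation gives a smooth family $\mathcal{Z}'\to\Delta$ with central fibre $Z'$ and $Z'_t=Z_t$ hyperk\"ahler for $t\neq 0$ (after shrinking $\Delta$).
\end{enumerate}
The manifolds $X_i=Z_{1/i}$ have periods converging to the period of $Z$, which is a point of $\Omega_\Lambda$. Moreover $Z'$ is a limit of them in the paper's sense and is holomorphically symplectic. Yet $Z'$ is not K\"ahler, since it contains the effective curve $\ell_1^*+\ell_2$, which is homologous to zero. This also contradicts Theorem \ref{hk main'}.

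In your argument, $Y=Z$ is a legitimate choice and every step before the last goes through. So the hypotheses of Theorem \ref{main1-prime} hold with $X_0=Z'$ and $Y_0=Z$. You handled non-separatedness between $X_i'$ and $Y_{t'^{(i)}}$, but the same phenomenon reappears between the two limits $X$ and $Y$, and nothing in the argument excludes it.

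\textbf{Where the paper's proof fails.} In the proof of Theorem \ref{main1-prime}, the failing step is the claim \eqref{0 in Ui}. The uniform-radius families $\phi^{(i)}(t)$, $t\in\Delta^{(i)}_\epsilon$, are identified with $\mathcal{X}\to B$ only as germs at $t^{(i)}$. Nothing forces that identification to extend over a region of $B$ containing $0$.

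The contradiction argument simply asserts the existence of $t''^{(i)}\to 0$ and $s^{(i)}$ with $\|\phi^{(i)}(s^{(i)})\|_{C^0}=\epsilon$. In the example, \eqref{WP>W0'} itself shows that such $s^{(i)}$ have periods bounded away from the limit period. They therefore cannot correspond to points of $B$ near $0$.

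So the proof cannot be completed as proposed. One needs an extra hypothesis forcing the limit $X$ to be the fibre $Y_0$, i.e.\ a separatedness condition at the limit point. Without it, the most one can hope for is the Fujiki class $\mathcal{C}$ conclusion of the Soldatenkov--Verbitsky conjecture.
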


The problem of whether compact holomorphically symplectic manifolds are necessarily K\"ahler has a long history, going back to the work of Todorov \cite{T2}. 
Motivated by the study of K3 surfaces and hyperk\"ahler geometry, Todorov proposed that simply connected compact holomorphically symplectic manifolds satisfying 
$$
H^{0}(X,\Omega_{X}^{2})=\mathbb{C}\cdot \sigma
$$
should be K\"ahler. However, the argument of Todorov contains a subtle gap, first pointed out by Y.-T.~Siu \cite{S1} in his analysis of Todorov's earlier work \cite{T1}.

Moreover, later developments showed that holomorphic symplecticity alone is not sufficient to guarantee the K\"ahler property. 
In particular, Guan \cite{Gu1,Gu2,Gu3} constructed simply connected compact holomorphically symplectic manifolds which are non-K\"ahler, demonstrating that additional geometric conditions are necessary. 

These examples show that the relation between holomorphic symplectic structures and K\"ahler geometry is substantially more subtle than originally expected by Todorov \cite{T2}. 
Theorem \ref{intr hk main} gives, to the best of our knowledge, the strongest known answer to this problem: a compact holomorphically symplectic manifold $X$ is hyperk\"ahler, provided that $X$ can be approximated by hyperk\"ahler manifolds with bounded periods in the period domain.

Recently, Soldatenkov and Verbitsky \cite{SV24} conjectured that a degeneration limit of hyperk\"ahler manifolds should at least belong to Fujiki class $\mathcal C$. 

From Griffiths \cite{Griffiths3} or Schmid \cite{Schmid73}, the period map on the punctured disk $\Delta^{*}$ with trivial monodromy extends across the origin. 
As a corollary of Theorem \ref{intr hk main}, we obtain a stronger form of the conjecture of Soldatenkov--Verbitsky.

\begin{theorem}\label{intr hk main'}
Let $f:\,\X \to \Delta$ be an analytic family of compact complex manifolds over a disk $\Delta\subset \C$. Suppose that $X_{t}$ is hyperk\"ahler for $t\in \Delta^{*}=\Delta\setminus \{0\}$. Then the central fiber $X_{0}$ is K\"ahler. 
\end{theorem}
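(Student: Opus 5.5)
The plan is to reduce Theorem \ref{intr hk main'} to Theorem \ref{intr hk main}. To do so we must produce a sequence of hyperk\"ahler fibers converging to $X_{0}$ with bounded periods.

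\textbf{Step 1: the sequence of fibers.} Choose any sequence $t_i\to 0$ in $\Delta^{*}$ and put $X_i=X_{t_i}$. Since $f$ is a smooth proper family, Ehresmann's theorem makes all $X_{t}$ diffeomorphic to $X_0$. After shrinking $\Delta$, the family is induced from the Kuranishi family of $X_0$ by a holomorphic map $\Delta\to B$ sending $0\mapsto 0$. Hence there are integrable Beltrami differentials $\phi(t)\in A^{0,1}(X_0,\Tan X_0)$, depending continuously on $t$ with $\phi(0)=0$, such that $X_t\cong (X_0)_{\phi(t)}$. Therefore $X_i\rightsquigarrow X_0$.

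\textbf{Step 2: markings.} Since $\Delta$ is contractible, the local system $R^2f_*\mathbb Z$ is trivial. A single trivialization $H^2(X_t,\mathbb Z)\cong H^2(X_0,\mathbb Z)\cong\Lambda$ therefore gives compatible markings for all $t$. For $t\neq 0$ these are markings of hyperk\"ahler manifolds, and $\Lambda$ carries the Beauville--Bogomolov form. All fibers $X_t$ with $t\ne0$ are deformation equivalent, so the form is constant on the punctured disk.

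\textbf{Step 3: bounded periods.} This is the main step. The restricted period map $\mathcal P:\Delta^{*}\to\Omega_\Lambda$, $t\mapsto[H^{2,0}(X_t)]$, is holomorphic and has trivial monodromy. By Griffiths \cite{Griffiths3} or Schmid \cite{Schmid73}, it extends holomorphically across $0$ with $\mathcal P(0)\in\Omega_\Lambda$.

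I would make this concrete as follows. The relative Hodge bundle $f_*\Omega^2_{\X/\Delta}$ is defined on all of $\Delta$, though we do not yet know $h^{2,0}(X_0)=1$. Alternatively, note that the line $\mathcal P(t)\subset\Lambda_\C$ is spanned by the class of a holomorphic family of $d$-closed $2$-forms. Those forms can be written as $e^{\phi(t)\lrcorner}$ applied to forms on $X_0$, and they have a nonzero limit. The limit line $\ell$ satisfies $q(\ell)=0$ by continuity.

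The delicate condition is $q(\sigma_0,\bar\sigma_0)>0$. The limit of a family of holomorphic symplectic forms on the fibers is a $d$-closed $(2,0)$-form $\sigma_0$ on $X_0$. To check non-degeneracy, I would use the Fujiki relation $\int\sigma_t^{n}\bar\sigma_t^{n}=c\,q(\sigma_t,\bar\sigma_t)^n$. If the limit form degenerated somewhere, I would argue that after rescaling by powers of $t$ it still has a nonzero limit, giving a contradiction. This is the point I expect to be the hardest to make rigorous. The cleanest route is probably to invoke the Griffiths--Schmid extension as quoted in the paper: the period map extends into $\Omega_\Lambda$ itself rather than into its compact dual.

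Once $\mathcal P$ extends, the image $\mathcal P(\bar\Delta_{r})$ of a smaller closed disk is compact in $\Omega_\Lambda$. Hence the periods of $X_i$ lie in a compact subset.

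\textbf{Step 4: conclusion.} Theorem \ref{intr hk main} now applies to $\{X_i\}$ with limit $X_0$, so $X_0$ is K\"ahler.
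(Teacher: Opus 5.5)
\noindent Your reduction is the paper's own: the paper also proves this statement by citing Griffiths/Schmid (trivial monodromy, so the period map extends across $0$ and the periods are bounded) and then applying Theorem~\ref{intr hk main}. The trouble is that both ingredients fail, and the second one is fatal.

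\textbf{Step~3 is not justified.} The Griffiths--Schmid extension theorems concern \emph{polarized} variations. They rely on the period domain carrying an invariant metric that is negatively curved in horizontal directions. The hyperk\"ahler domain $\Omega_\Lambda$, with $q$ of signature $(3,b_2-3)$, is a homogeneous space of $SO(3,b_2-3)$ with non-compact isotropy $SO(2)\times SO(1,b_2-3)$. In it, every holomorphic direction is horizontal (since $F^1=\sigma^{\perp}$). It also contains compact rational curves: the conic in $\mathbb{P}(W_{\C})$ for any positive $3$-plane $W$, i.e.\ twistor lines. So no Schwarz-lemma argument is available.

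For a one-parameter family, extension of $\mathcal P$ into the quadric $\{q(z,z)=0\}$ is automatic. Take a generator of the rank-one sheaf $f_*\Omega^2_{\X/\Delta}$ and divide its class by the right power of $t$. What remains is the inequality $q(\mathcal P(0),\overline{\mathcal P(0)})>0$. That is exactly the point you flagged, and the citation does not prove it.

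\textbf{Step~4 cannot work, because the statement is false.} Use the example the paper itself quotes in Section~\ref{app2}.
\begin{enumerate}[label=(\roman*)]
\item Let $X$ be hyperk\"ahler of dimension $2n\ge 4$, containing disjoint Lagrangian $P_1,P_2\cong\mathbb{P}^n$ whose lines $\ell_1,\ell_2$ are homologous.
\item Choose a one-parameter deformation $\X\to\Delta$ whose Kodaira--Spencer class has non-zero image in $H^1(P_1,N_{P_1/X})=H^1(P_1,\Omega_{P_1})\cong\C$. Then $N_{P_1/\X}$ is the non-split extension of $\mathcal{O}$ by $\Omega_{P_1}$, namely $\mathcal{O}(-1)^{\oplus(n+1)}$.
\item Flop $P_1$ inside the total space. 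This gives a smooth family $\X'\to\Delta$ with $X'_t=X_t$ hyperk\"ahler for $t\neq 0$, and $X'_0$ the Mukai flop of $X$ along $P_1$.
\item In $X'_0$, a line $\ell_1'$ of the flopped $\mathbb{P}^n$ satisfies $[\ell_1']+[\ell_2]=-[\ell_1]+[\ell_2]=0$. So $X'_0$ is not K\"ahler, although it is holomorphically symplectic and lies in class $\mathcal C$. This is precisely why Soldatenkov--Verbitsky conjecture only class $\mathcal C$.
\end{enumerate}

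In this example the periods of $X'_t$ converge to the period of $X$, so they are bounded. The failure is therefore inside Theorem~\ref{intr hk main}, i.e.\ Theorem~\ref{main1-prime} with $Y=X$. For small $\epsilon$ and large $i$, every fibre over the uniform discs $\Delta^{(i)}_\epsilon$ is a small deformation of the K\"ahler manifold $X$. Hence $X'_0$ is isomorphic to none of them, $0\notin U^{(i)}$, and \eqref{0 in Ui} fails.

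The paper's argument for \eqref{0 in Ui} assumes that $U^{(i)}$ can stop short of $0$ only where $\|\phi^{(i)}\|_{C^0}$ reaches $\epsilon$. Here $U^{(i)}$ is cut off by the non-separatedness of $X$ and $X'_0$ in moduli, while the Beltrami differentials stay small. A correct argument must either aim at a weaker conclusion, such as Fujiki class $\mathcal C$, or add hypotheses that exclude such bimeromorphic modifications of the limit.
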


The conjecture of Soldatenkov--Verbitsky \cite{SV24} on degeneration limits of hyperk\"ahler manifolds is closely related to the work of Perego \cite{Per} on moduli spaces of stable sheaves on K3 surfaces.

\begin{conjecture}[Perego \cite{Per}]\label{intr Per conj}
Let $S$ be a K3 surface with a K\"ahler class $\omega$ and a Mukai vector
$$
v=(r,\xi,a)\in H^{2*}(S,\mathbb Z),\, v^{2}\geq 0
$$
where $\xi\in \mathrm{NS}(S)$, $r>1$ is prime to $\xi$, and $\omega$ is $v$-generic. 
Then the moduli space
$M_v(S,\omega)$ of $\mu_\omega$-stable coherent sheaves on $S$ with Mukai vector $v$ is a hyperk\"ahler manifold.
\end{conjecture}

As another application of Theorem \ref{intr hk main}, together with the recent developments on the Torelli problems for hyperk\"ahler manifolds due to Verbitsky \cite{Verbitsky13,VerbitskyErratum}, Huybrechts \cite{HuybrechtsTorelli}, and Markman \cite{Markman11}, we obtain a complete solution to the conjecture of Perego \cite{Per}.

\begin{theorem}\label{intr app3 main}
Conjecture \ref{intr Per conj} is true.
\end{theorem}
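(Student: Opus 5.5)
The plan is to show that $M_v(S,\omega)$ is a compact holomorphically symplectic manifold that arises as a limit, in the sense of the Definition above, of hyperk\"ahler manifolds with bounded periods. Theorem \ref{intr hk main} then gives K\"ahlerness, and hence hyperk\"ahlerness. Step 1 is to record what is already known about $M_v(S,\omega)$ without the K\"ahler property. By Mukai and Perego, since $r>1$ is prime to $\xi$ and $\omega$ is $v$-generic, every $\mu_\omega$-semistable sheaf with Mukai vector $v$ is stable and simple. Hence $M_v(S,\omega)$ is a compact smooth complex space, smooth because $\mathrm{Ext}^2_0=0$ by Serre duality on a K3. It is holomorphically symplectic via the Mukai form. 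Perego also shows it is simply connected with $h^{2,0}=1$ and that $H^2$ carries the Mukai lattice $v^\perp$. So all that is missing is the K\"ahler property. I will take these facts from \cite{Per} as given.

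Step 2 is to deform $(S,\omega,v)$ so that $\xi$ stays of type $(1,1)$. I consider the Kuranishi family of $S$ restricted to the Noether--Lefschetz locus $T$ where $\xi$ remains of type $(1,1)$; this is a smooth hypersurface of dimension $19$ in the 20-dimensional K\"ahler deformation space of $S$, and projective K3 surfaces $S_t$ are dense in it. I choose a sequence $t_i\to0$ with $S_{t_i}$ projective, together with K\"ahler classes $\omega_i\to\omega$ that are $v$-generic. This is possible because the walls form a locally finite union of hyperplanes and genericity is an open condition. Perego's relative construction gives a smooth proper family $\mathcal M\to T$ of relative moduli spaces with fibre $M_v(S_t,\omega_t)$. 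On $S_{t_i}$ I perturb $\omega_i$ to a rational ample class in the same chamber. Then $M_v(S_{t_i},\omega_i)$ equals a Gieseker moduli space of stable sheaves on a projective K3, which is a projective hyperk\"ahler manifold of $K3^{[n]}$-type by Mukai, Huybrechts, O'Grady and Yoshioka. By Ehresmann these fibres are all diffeomorphic, and by Kuranishi universality (Theorem 3.1 of Chapter 4.3 in \cite{MK}) they are $X_{\phi_i}$ with $\phi_i\to0$ on $X=M_v(S,\omega)$. Thus $X_i\rightsquigarrow X$.

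Step 3 is to bound the periods. Marking via the Mukai isometry $v^\perp\cong\Lambda$, the period of $M_v(S_t,\omega_t)$ is the image of $\sigma_{S_t}$ under $H^2(S_t)\to v^\perp$. Since $v$ is algebraic on every $S_t$, this image is a continuous map from $T$ into $\Omega_\Lambda$, because $\sigma_{S_t}\perp v$ in the Mukai pairing. The periods of the $X_i$ therefore converge to the period point of $X$, so they lie in a compact subset of $\Omega_\Lambda$. Theorem \ref{intr hk main} then applies: $X$ is K\"ahler, and being holomorphically symplectic and simply connected with $h^{2,0}=1$, it is hyperk\"ahler. The Torelli results of Verbitsky, Huybrechts and Markman are used only to identify the markings and to confirm that the monodromy and marked period continuity are compatible in the family.

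The main obstacle is Step 2, specifically the existence of a smooth relative moduli family over a base on which the genericity condition holds uniformly, with fibres isomorphic to nearby Kuranishi fibres. If $\mu_{\omega_t}$-stability jumps near $t=0$ the family may fail to be flat. I would first try to handle this by noting that $v$-genericity is open in the pair $(t,\omega_t)$, since the walls are defined by classes of bounded square. After shrinking $T$, a single chamber then persists, and openness of stability in flat families gives a smooth proper family.
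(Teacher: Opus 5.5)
Your proposal follows essentially the same route as the paper: a Perego-type family of moduli spaces in which $\xi$ stays algebraic, density of projective (hence hyperk\"ahler) fibres, and boundedness of the periods because $\lambda_t((0,\sigma_t,0))$ stays in $\Omega_\Lambda$ up to $t=0$, followed by Theorem \ref{hk main}; the paper proves the period bound in Proposition \ref{bounded periods hk} via the finitely many components of $\Delta_{\mathrm{HK}}$ rather than along a sequence. The one real difference is that the paper does not build the family itself with varying classes $\omega_t$ over the Noether--Lefschetz locus, but takes Perego's Lemma 4.1 (fixed $\omega$, dense projective fibres) as given, and you should do the same to dispose of the obstacle you flag, because openness of genericity and of stability only gives an open smooth family, not properness over the base.
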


Our results may be viewed as a deformation-theoretic and Hodge-theoretic approach to this circle of problems. 
Rather than relying on the analytic and topological methods in the sense of Siu \cite{S1}, we use the variation of Hodge structures to control the deformation of K\"ahler classes. 
As pointed out by Soldatenkov--Verbitsky \cite{SV24}, arguments based on Bishop compactness and limits of analytic cycles involve delicate issues concerning the behavior of limiting K\"ahler classes under degeneration. 
Our approach avoids these difficulties and gives a direct mechanism for recovering K\"ahlerness in degenerations of Calabi--Yau manifolds. 
In particular, it leads to stronger forms of the conjecture of Soldatenkov--Verbitsky \cite{SV24} and the conjecture of Perego \cite{Per}.

The paper is organized as follows.

In Section \ref{com section}, we recall several basic notions from complex geometry, especially the norms and differential operators on the spaces $A^{0,*}(X,\Tan X)$ over compact Hermitian manifolds.

In Section \ref{def section}, we review the global aspects of the deformation theory of Calabi--Yau manifolds developed in \cite{LRY,LZ}, together with the global stability of K\"ahler structures established in \cite{LS26-2}.

In Section \ref{KE}, we establish a series of estimates showing that the local Weil--Petersson distance on the moduli space is equivalent to the $C^{0}$-norm of the corresponding Beltrami differentials.

In Section \ref{global Kahler section}, we prove that in a local Kuranishi family, the existence of one K\"ahler fiber implies the K\"ahlerness of all fibers in a large nearby domain controlled by the supremum operator norm of the corresponding Beltrami differentials.

In Section \ref{proof of main}, we prove Theorem \ref{main1}, the main result of this paper, by combining the estimates from Section \ref{KE} with the global stability theory of K\"ahler structures.

In Section \ref{appl to ST}, we apply the main theorem of this paper to K3 surfaces and give a new proof of Siu's theorem \cite{S1} on the K\"ahlerness of K3 surfaces.

In Section \ref{app2}, we study degeneration limits of hyperk\"ahler manifolds and prove that deformation limits of hyperk\"ahler manifolds with bounded periods remain K\"ahler, which gives a complete and stronger solution to the conjecture of Soldatenkov--Verbitsky \cite{SV24} on hyperk\"ahler degenerations.

Finally, in Section \ref{app3}, we apply the main theorem to moduli spaces of stable sheaves on K3 surfaces. 
In particular, for certain Mukai vectors $v$ and K\"ahler classes $\omega$, we prove that the moduli spaces $M_v(S,\omega)$ are hyperk\"ahler manifolds, which gives a complete solution to Conjecture~1.1 of Perego \cite{Per} in this setting.

\section{Complex geometry}\label{com section}
In this section, we recall some basic notions from complex geometry, with emphasis on the norms and differential operators on $A^{0,*}(X,\Tan X)$ over a compact Hermitian manifold $X$.
\\

Let $(X,g)$ be a compact Hermitian manifold and $$A^{0,p}(X,\Tan X),\, p\ge 0$$ denote the space of global smooth $(0,p)$-forms with values in $\Tan X$ on $X$. Then the Hermitian metric $g$ on $X$ induces an Hermitian metric $(\cdot,\cdot)$ on $A^{0,p}(X,\Tan X)$.
Then we have the following standard operators 
\begin{eqnarray*}
&&\bp:\, A^{0,p}(X,\Tan X) \to A^{0,p+1}(X,\Tan X);\\
&&\bp^{*}:\, A^{0,p+1}(X,\Tan X) \to A^{0,p}(X,\Tan X), \text{ the formal adjoint of }\bp;\\
&&\triangle_{\bp}=\bp \bp^*+\bp^*\bp:\, A^{0,p}(X,\Tan X) \to A^{0,p}(X,\Tan X).
\end{eqnarray*}

From the theory of elliptic operators, the subspace
$$\mathbb{H}^{0,p}(X,\Tan X)=\mathrm{Ker}\,\{\triangle_{\bp}=\bp \bp^*+\bp^*\bp:\, A^{0,p}(X,\Tan X) \to A^{0,p}(X,\Tan X)\}$$
is finite dimensional, with the Green's operator $$G:\,A^{0,p}(X,\Tan X) \to A^{0,p}(X,\Tan X)$$ 
such that 
$$\mathrm{Id}_{A^{0,p}(X,\Tan X)}=\mathbb H+\triangle_{\bp} G=\mathbb H+G\triangle_{\bp},$$
where $$\mathbb H:\, A^{0,p}(X,\Tan X) \to \mathbb{H}^{0,p}(X,\Tan X)$$ is the projection map onto the finite dimensional subspace. That is, the Green's operator $G$ is the inverse of the elliptic operator $\triangle_{\bp}$ modulo $\mathbb{H}^{0,p}(X,\Tan X)$.

Following \cite{KS3} and \cite{MK}, we introduce the following norms on $A^{0,p}(X,\Tan X)$. 

First we fix a finite open cover $\mathscr U=\{(U_{\alpha};z_{\alpha}=(z_{\alpha1},\cdots,z_{\alpha d}))\}_{\alpha=1}^{r}$ on $X$ such that for each $\alpha$ there exists an open subset $V_{\alpha}$ of $U_{\alpha}$ satisfying $V_{\alpha}\subset \bar{V_{\alpha}} \subset U_{\alpha}$ and $\cup_{\alpha}V_{\alpha}$ still covers $X$.

Let $\phi \in A^{0,p}(X,\Tan X)$, then
$$\phi|_{U_{\alpha}}(z)= \frac{1}{p!}\sum_{\substack{i,I\\|I|=p}}{\phi_{\alpha}}_{\bar I}^{i}(z)\partial_{\alpha i}\otimes d\bar z_{\alpha}^{I},$$
where $\partial_{\alpha i}=\frac{\partial}{\partial z_{\alpha i}}$ and $d\bar z_{\alpha}^{I}=d\bar z_{\alpha i_{1}}\wedge \cdots d\bar z_{\alpha i_{p}}$ for $I=(i_{1},\cdots,i_{p})$.

We define the $C^{k}$-norm of $\phi$ by 
$$\|\phi\|_{C^k}=\max_{\alpha}\max_{i,\bar I}\sum_{{J,\, |J|\le k}}\sup_{z\in V_{\alpha}} |D_{\alpha}^{J}{\phi_{\alpha}}_{\bar I}^{i}(z)|,$$
where $J=(j_{1},j_{2},\cdots,j_{2d-1},j_{2d})$ with $|J|=j_{1}+\cdots + j_{2d}$, $d=\dim_{\C}X$ and 
$$D_{\alpha}^{J}=\left(\frac{\partial}{\partial z_{\alpha 1}}\right)^{j_{1}}\left(\frac{\partial}{\partial \bar z_{\alpha 1}}\right)^{j_{2}}\cdots \left(\frac{\partial}{\partial z_{\alpha d}}\right)^{j_{2d-1}}\left(\frac{\partial}{\partial \bar z_{\alpha d}}\right)^{j_{2d}}.$$
We also define the H\"older norm of $\phi$ by
$$\|\phi\|_{C^{k,\alpha}}=\|\phi\|_{C^k}+\max_{\alpha}\max_{i,\bar I}\sum_{{J,\, |J|= k}}\sup_{z,z'\in V_{\alpha}} \frac{|D_{\alpha}^{J}{\phi_{\alpha}}_{\bar I}^{i}(z)-D_{\alpha}^{J}{\phi_{\alpha}}_{\bar I}^{i}(z')|}{|z-z'|^{\alpha}}.$$

In \cite{LS26-2}, the supremum operator norm $\|\phi\|^{E}$ of $\phi\in A^{0,1}(X,\Tan X)$ is defined by
\begin{equation}\label{son E}
\|\phi\|^{E}=\max_{\alpha}\max_{z\in \bar{V_{\alpha}}} \sigma_{\mathrm{max}}\left(\phi_{\bar j}^{i}(z)\right),
\end{equation}
where $\sigma_{\mathrm{max}}(A)$ denotes the largest singular value of a complex square matrix $A=(a_{ij})_{1\le i,j\le d}$, i.e. the largest square root of the eigenvalues of $\bar{A}^{T}A$. 

Since 
$$\max_{i,j}|a_{ij}|\le \sigma_{\mathrm{max}}(A)\le d \max_{i,j}|a_{ij}|,$$
the two norms $\|\phi\|^{E}$ and $\|\phi\|_{C^0}$ are equivalent, and we write
$$ \|\phi\|^{E}\asymp \|\phi\|_{C^0},\, \phi\in A^{0,1}(X,\Tan X).$$

We now go back to the general situation. For $\phi \in A^{0,p}(X,\Tan X)$, we define the Sobolev norm by
$$\|\phi\|_{W^{k}}^{2}=\sum_{\alpha}\sum_{i,\bar I}\sum_{J,\, |J|\le k}\int_{V_{\alpha}}|D_{\alpha}^{J}{\phi_{\alpha}}_{\bar I}^{i}(z)|^{2}dz_{\alpha},$$
where $dz_{\alpha}=dz_{\alpha 1}d\bar {z_{\alpha 1}}\cdots dz_{\alpha d}d\bar {z_{\alpha d}}$.

It is obvious that 
$$ \|\phi\|_{W^{k}}\le C(\mathscr U)\|\phi\|_{C^{k}},$$
where $C(\mathscr U)$ is a positive constant which depends only on the cover $\mathscr U$ of $X$. The converse is not true in general. But when $\phi$ is harmonic, we have the following estimates.
\begin{proposition}\label{kalpha<W0}
There exists a constant $C(k)$, which only depends on $k\in \mathbb N$ and on the cover $\mathscr U$ of $X$, such that
\begin{equation}\label{Ck to H0}
\|\phi\|_{C^{k,\alpha}}\le C(k)\|\phi\|_{W^{0}}
\end{equation}
for all $\phi \in \mathbb H^{0,p}(X,\Tan X)$.
\end{proposition}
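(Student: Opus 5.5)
The key point is that $\mathbb H^{0,p}(X,\Tan X)$ is finite dimensional. On a finite-dimensional vector space all norms are equivalent, so the estimate \eqref{Ck to H0} holds with some constant. I would nevertheless structure the argument so that the dependence of the constant is transparent, via elliptic regularity and a bootstrap.

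\textbf{Step 1: the $W^0$ term is a norm on the harmonic space.} First I check that $\|\cdot\|_{W^0}$ restricted to $\mathbb H^{0,p}(X,\Tan X)$ is a genuine norm and not merely a seminorm. Since the $V_\alpha$ cover $X$, $\|\phi\|_{W^0}=0$ forces every component ${\phi_\alpha}^i_{\bar I}$ to vanish almost everywhere on each $V_\alpha$. Harmonic forms are smooth, so $\phi\equiv 0$.

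\textbf{Step 2: the H\"older norm is finite on harmonic forms.} Every $\phi\in\mathbb H^{0,p}$ is smooth by elliptic regularity. Each $V_\alpha$ has compact closure $\bar V_\alpha\subset U_\alpha$, so each $\|\phi\|_{C^{k,\alpha}}$ is finite.

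\textbf{Step 3: equivalence of norms.} On the finite-dimensional space $\mathbb H^{0,p}$ all norms are equivalent. This gives a constant $C(k)$ with $\|\phi\|_{C^{k,\alpha}}\le C(k)\|\phi\|_{W^0}$.

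\textbf{Step 4: an explicit route to the constant.} To see that $C(k)$ depends only on $k$, the cover and the fixed Hermitian structure, I would instead run the standard interior estimates. In the chart $U_\alpha$, $\triangle_{\bp}$ is a second-order elliptic system with smooth coefficients and principal part $-\sum_j\partial_j\bar\partial_j$ times the identity, up to metric factors. For a harmonic $\phi$ we have $\triangle_{\bp}\phi=0$.

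Choose intermediate domains $V_\alpha\Subset W_\alpha^{(m)}\Subset\cdots\Subset W_\alpha^{(0)}\Subset U_\alpha$. Interior G\aa rding estimates then give
\[
\|\phi\|_{W^{s+2}(W^{(j+1)}_\alpha)}\le C\left(\|\triangle_{\bp}\phi\|_{W^{s}(W^{(j)}_\alpha)}+\|\phi\|_{W^0(W^{(j)}_\alpha)}\right)=C\|\phi\|_{W^0(W^{(j)}_\alpha)}.
\]
This inequality should be iterated with the outer domain enlarged at each stage, starting from the $L^2$ norm on the outermost domain, until the Sobolev order $s$ exceeds $k+1+d$. Sobolev embedding $W^{s}\hookrightarrow C^{k,\alpha}$ on $V_\alpha$ then yields the H\"older bound.

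\textbf{Step 5: the main technical point.} The $W^0$ norm in the statement is taken only over the $V_\alpha$, whereas the bootstrap needs $L^2$ control on slightly larger sets. I would handle this in one of two ways. One option is to note that the $L^2$ norm over all of $X$, computed with the metric $g$, is bounded by a constant times $\sum_\alpha\int_{V_\alpha}$, because the $V_\alpha$ cover $X$ and the transition functions are bounded on compact overlaps. The other option is simply to invoke the equivalence of norms from Step 3, which sidesteps the issue.

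Either way the constant depends only on $k$, the cover $\mathscr U$ and the fixed metric $g$. The metric dependence is implicit in the statement, since $\mathbb H^{0,p}$ itself depends on $g$.
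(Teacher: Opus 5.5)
Your proof is correct, but its main step differs from the paper's. The paper does not use finite-dimensionality. It combines the Sobolev inequality $|D_{\alpha}^{J}{\phi_{\alpha}}_{\bar I}^{i}(z)|\le C(k,l)\|\phi\|_{W^{k+l+1}}$ on $V_\alpha$ (for $|J|\le k+1$, $l\ge d$) with the global Friedrichs inequality $\|\phi\|_{W^{k+l+1}}\le C'(k)\bigl(\|\triangle_{\bp}\phi\|_{W^{k+l-1}}+\|\phi\|_{W^{0}}\bigr)$, whose first term vanishes for harmonic $\phi$.

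Your Steps 1--3 already prove the proposition as stated, and more cheaply: $\|\cdot\|_{W^0}$ is a norm on $\mathbb H^{0,p}(X,\Tan X)$, the $C^{k,\alpha}$ quantity is finite there, and on a finite-dimensional space any seminorm is dominated by any norm. The cost is quantitative content. The constant comes from compactness of a unit sphere, so it says nothing about how it changes when the complex structure or metric varies.

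Your Step 4, together with Step 5 (which correctly handles the fact that $\|\cdot\|_{W^0}$ only integrates over the $V_\alpha$), is essentially the paper's argument with interior estimates on nested domains in place of the global Friedrichs inequality. Since $\triangle_{\bp}\phi=0$, a single interior estimate for each Sobolev order already suffices, so no iteration is needed.

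This elliptic route is the one that matters downstream. In the proof of Theorem~\ref{main1-prime}, Proposition~\ref{WP>W0 prop} is applied on every $X_{t^{(i)}}$, and the final contradiction requires $C(1)$ to be bounded independently of $i$. The elliptic route can supply this, because its constants depend only on coefficient bounds for $\triangle_{\bp_t}$ in the charts, which are uniform for the smooth family of Ricci-flat metrics. The abstract norm-equivalence argument cannot.
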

\begin{proof} 
The proof follows by applying Lemma 2 and Lemma 3 in \cite{KS3}. From Sobolev inequalities, there exists a positive constant $C(k,l)$ for any $l\ge d$ such that
$$|D_{\alpha}^{J}{\phi_{\alpha}}_{\bar I}^{i}(z)|\le C(k,l) \|\phi\|_{W^{k+l+1}}$$
for $z\in V_{\alpha}$ and for all multi-indices $J$ with $|J|\le k+1$. From Friedrichs' equality, there exists a positive constant $C'(k)$ such that 
$$\|\phi\|_{W^{k+l+1}}\le C'(k)\left(\|\triangle_{\bp}\phi\|_{W^{k+l-1}}+\|\phi\|_{W^{0}}\right)=C'(k)\|\phi\|_{W^{0}},$$
since $\phi \in \mathbb H^{0,p}(X,\Tan X)$. Therefore
$$|D_{\alpha}^{J}{\phi_{\alpha}}_{\bar I}^{i}(z)|\le C(k,l)C'(k)\|\phi\|_{W^{0}},\, z\in V_{\alpha},$$
and there exists a positive constant $C(k)$ such that \eqref{Ck to H0} holds.
\end{proof}

\section{Deformation theory}\label{def section}
In this section, we recall the global aspects of the deformation theory of Calabi--Yau manifolds developed in \cite{LRY,LZ}, together with the global stability of K\"ahler structures under deformations of compact complex manifolds established in \cite{LS26-2}.
\\

A deformation of a compact complex manifold $X$ is an analytic family $f:\,\X\to \Delta$ of compact complex manifolds over a polydisk $$\Delta_{\epsilon}=\{z=(z_{1},\cdots,z_{N})\in \C^{N}:\,|z_{i}|<\epsilon, \, 1\le i\le N\}$$ with sufficiently small radius $\epsilon>0$ such that $X$ is biholomorphic to $X_{0}\triangleq f^{-1}(0)$.

From Kodaira--Spencer \cite{KS12}, the complex structures of $X_{t}$, $t\in \Delta_{\epsilon}$, are given by the Beltrami differentials
$$\phi(t)\in A^{0,1}(X,\Tan X),$$
with integrability (also called Maurer--Cartan equation):
\begin{equation}\label{MC eqn}
\bar\partial \phi(t)=\frac{1}{2}[\phi(t),\phi(t)].
\end{equation}
Precisely, the complex structure of $X_{t}$ is uniquely determined by the holomorphic tangent bundle $\Tan X_{t}$, which is isomorphic to 
\begin{equation}\label{Tan Xt}
{d_{t}}_{*}\Tan X_{t}=\{v-\bar{\phi}(v):\,v\in \mathrm{T}^{1,0}X\}\subset \mathrm{T}_{\C}X.
\end{equation}
Here $\mathrm{T}_{\C}X$ is the complexified differential tangent bundle of $X$, and the diffeomorphisms
$$d_{t}:\, X_{t}\to X_{0},\,t\in \Delta_{\epsilon}$$
exist as a consequence of Ehresmann's theorem; see Theorem 4.1 in Chapter 1
of \cite{MK}. Moreover, the diffeomorphisms $d_{t}$ can be chosen to depend holomorphically
on $t$, see \cite{Clemens}. Consequently the Beltrami differential $\phi(t)$
is holomorphic in $t\in \Delta_{\epsilon}$ and admits the expansion
$$
\phi(t)=\phi_{1}(t)+\phi_{2}(t)+\cdots+\phi_{\mu}(t)+\cdots,
$$
where each homogeneous term is given by
$$
\phi_{\mu}(t)=\sum_{i_{1}+\cdots+i_{N}=\mu}
\phi_{i_{1}\cdots i_{N}}\,t_{1}^{i_{1}}\cdots t_{N}^{i_{N}}.
$$
Then the integrability in \eqref{MC eqn} is equivalent to that
$$\left\{\begin{aligned}
&\bar\partial \phi_{1}(t)=0\\
&\bar\partial \phi_{\mu}(t)=\frac{1}{2}\sum_{1\le \nu\le \mu-1}[\phi_{\nu}(t),\phi_{\mu-\nu}(t)]
\end{aligned}\right. .$$

Conversely, on a compact complex manifold $X$, if we define $\phi(t)=\sum_{\mu=1}^{\infty}\phi_{\mu}(t)$ with
\begin{equation}\label{KuraBelt}
\left\{
\begin{aligned}
\phi_{1}(t)&=\sum_{j=1}^{N}\theta_{j}t_{j},\\
\phi_{\mu}(t)&=\frac{1}{2}\bp^{*}G\left(\sum_{1\le \nu\le \mu-1}
\left[\phi_{\nu}(t),\phi_{\mu-\nu}(t)\right]\right),
\end{aligned}\right.
\end{equation}
where $\theta_{1},\dots,\theta_{N}$ form a basis of $\mathbb{H}^{0,1}(X,\Tan X)$. 

Kuranishi's theorem asserts that there is an analytic family $f:\, \X\to B$ of compact complex manifolds over the analytic space
$$B=\left\{ t \in \Delta_{\epsilon}:\, \mathbb H[\phi(t), \phi(t)] = 0 \right\}, \,0<\epsilon <<1,$$
such that $X$ is biholomorphic to $X_{0}=f^{-1}(0)$. One can refer to Chapter 4.3 of \cite{MK} for details.

The above discussion concerns the local aspect of deformation theory.
Recently, the works \cite{LRY, LZ} by the first author and his collaborators, together with joint work \cite{LS26-1,LS26-2} by the authors of this paper, have initiated the study of global deformation theory and its applications to Hodge theory and complex geometry.

We first give the brief introduction of the results from \cite{LRY}, which are needed in the proof of the main theorem of this paper.

\begin{theorem}\label{gd for CY}
Let $(X,\omega)$ be a Calabi--Yau manifold.
Then there exists a constant $C_1$ depending only on the K\"ahler metric $\omega$ with the following properties. 

(1). Let $\{\theta_{1}, \cdots, \theta_{N}\}$ be a basis of $\mathbb{H}^{0,1}(X,\mathrm T^{1,0}X)$. Then we can construct a smooth power series of integrable Beltrami differentials on
$X$,
$$\phi(t)=\sum_{\mu=1}^{\infty}\phi_{\mu}(t),$$
for $t\in \Delta$ with $\|\phi_{1}(t)\|_{C^{1}}< \frac{1}{4NC_1}$, where $$\phi_{1}(t)=\sum_{j=1}^{N}\theta_{j}t_{j},\, \phi_{\mu}(t)=\sum_{i_{1}+\cdots+i_{N}=\mu}
\phi_{i_{1}\cdots i_{N}}\,t_{1}^{i_{1}}\cdots t_{N}^{i_{N}}.$$

(2). For any nontrivial holomorphic $(n,0)$ form $\Omega_0$ on $X$, we have the global convergence of $\phi(t)\lrcorner\Omega_{0}$ with $\|\phi(t)\lrcorner\Omega_{0}\|_{L^{2}}\leq\sum_{I}\|\phi_{I}\lrcorner\Omega_{0}\|_{L^{2}}\cdot|t|^{|I|}<\infty$ as long as
$t\in \Delta$ with $\|\phi_{1}(t)\|_{C^{1}}< \frac{1}{4NC_1}$.

(3). There exists a holomorphic family of holomorphic $(n,0)$-forms on $X_{t}$ given by
\begin{equation}\label{OmegaC}
\Omega_0(t):=e^{\phi(t)}\lrcorner\Omega_{0}=\Omega_{0}+\phi(t)\lrcorner\Omega_{0}+\cdots+\frac{1}{k!}\underbrace{\phi(t)\lrcorner \cdots \phi(t)\lrcorner}_{k}\Omega_{0}+\cdots,
\end{equation}
for $t\in \Delta$ with $\|\phi_{1}(t)\|_{C^{1}}< \frac{1}{4NC_1}$.
\end{theorem}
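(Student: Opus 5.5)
The plan is to follow the global Tian--Todorov construction of \cite{LRY} and prove the three parts in order. We fix the Calabi--Yau metric $\omega$ and a nowhere vanishing holomorphic volume form $\Omega_0$. The map $\phi\mapsto\phi\lrcorner\Omega_0$ is then an isometric isomorphism
\[
A^{0,p}(X,\Tan X)\cong A^{n-1,p}(X),
\]
and it intertwines $\bp$. Since $\omega$ is Ricci-flat, $\Omega_0$ is parallel, so this isomorphism also intertwines $\bp^*$, $G$ and harmonic projection. Hence $\theta_j\lrcorner\Omega_0$ is harmonic, and by the K\"ahler identities it is both $\p$-closed and $\bp$-closed.

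\emph{Step 1 (integrability).} We define $\phi_\mu$ recursively by \eqref{KuraBelt}, so $\phi(t)=\sum_\mu\phi_\mu(t)$ with $\phi_\mu=\frac12\bp^*G\bigl(\sum_{\nu}[\phi_\nu,\phi_{\mu-\nu}]\bigr)$. The Tian--Todorov lemma gives
\[
[\phi,\psi]\lrcorner\Omega_0=-\p\bigl(\phi\lrcorner\psi\lrcorner\Omega_0\bigr)
\]
when $\phi\lrcorner\Omega_0$ and $\psi\lrcorner\Omega_0$ are $\p$-closed. By induction on $\mu$ (each $\phi_\mu\lrcorner\Omega_0$ is $\p$-exact, hence $\p$-closed), every bracket $[\phi_\nu,\phi_{\mu-\nu}]\lrcorner\Omega_0$ is $\p$-exact. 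By the $\p\bp$-lemma its harmonic part vanishes, so the obstruction $\mathbb H[\phi,\phi]$ vanishes identically. The standard Kuranishi argument then shows $\bp\phi_\mu=\frac12\sum_\nu[\phi_\nu,\phi_{\mu-\nu}]$ formally, which is \eqref{MC eqn}. This gives smoothness of the base and the formal part of (1).

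\emph{Step 2 (convergence; the main obstacle).} The difficulty is a convergence radius governed only by the size of $\phi_1(t)$, not by a small $|t|$. We use the H\"older estimate
\[
\bigl\|\bp^*G[\phi,\psi]\bigr\|_{C^{k,\alpha}}\le C_1\,\|\phi\|_{C^{k,\alpha}}\,\|\psi\|_{C^{k,\alpha}},
\]
with $C_1$ depending only on $\omega$ (elliptic Schauder estimates for $G$). Comparing $\phi(t)$ with the majorant series
\[
A(s)=\frac{1}{2C_1}\Bigl(1-\sqrt{1-4C_1 s}\Bigr),
\]
which solves $A=s+C_1A^2$, we substitute $s=\|\phi_1(t)\|$. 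The coefficients of $A$ dominate $\|\phi_\mu(t)\|$ by induction, and the series converges for $s<1/(4C_1)$. Summing over the $N$ directions (bounding $\sum_I$ of monomial coefficients) costs the factor $N$, which gives the stated domain $\|\phi_1(t)\|_{C^1}<\frac1{4NC_1}$. Absolute convergence also shows that $\phi(t)$ is smooth (by bootstrapping in $C^{k,\alpha}$) and holomorphic in $t$. The step needing the most care is passing from $C^{k,\alpha}$ to the $C^1$ smallness condition and bounding the coefficientwise sum $\sum_I\|\phi_I\|\,|t|^{|I|}$ rather than $\|\phi(t)\|$ alone. For that I would run the majorant argument on the formal power series with norms of coefficients, treating $t_j$ through $|t_j|\,\|\theta_j\|$.

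\emph{Step 3 (parts (2) and (3)).} Part (2) follows from Step 2, since $\|\phi_I\lrcorner\Omega_0\|_{L^2}\le C\|\phi_I\|_{C^0}$ and the coefficient sums converge. For (3), the form $e^{\phi}\lrcorner\Omega_0$ is a finite sum (it terminates at degree $n$) with each term convergent. It is of type $(n,0)$ on $X_t$ because contracting with $\phi$ is exactly how one passes to the frame $dz-\phi\,d\bar z$ dual to \eqref{Tan Xt}. It is holomorphic on $X_t$ because $d(e^\phi\lrcorner\Omega_0)=0$, which follows from the identity
\[
d\bigl(e^{\phi}\lrcorner\Omega_0\bigr)=e^{\phi}\lrcorner\bigl(\p+\bp-\phi\lrcorner\p\ldots\bigr)
\]
together with the Maurer--Cartan equation and $\p(\phi\lrcorner\Omega_0)=0$ (cf.~\cite{LRY}). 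Holomorphic dependence on $t$ is inherited from $\phi(t)$.
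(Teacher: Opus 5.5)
Your proposal is correct and is essentially the Liu--Rao--Yang argument (\cite[Theorem~4.3]{LRY}), which is all the paper gives for this statement. It has the same three ingredients: the Tian--Todorov lemma plus orthogonality of $\partial$-exact forms to harmonic ones for integrability, the Catalan-type majorant $S=x_1\tau+CS^2$ with the H\"older estimates of \cite{MK} for convergence, and the Maurer--Cartan equation together with $\partial(\phi(t)\lrcorner\Omega_0)=0$ for the $d$-closedness of $e^{\phi(t)}\lrcorner\Omega_0$. The step you flag closes by finite-dimensionality of $\mathbb{H}^{0,1}(X,\mathrm{T}^{1,0}X)$: for a normalized (say $L^2$-orthonormal) basis one has $\sum_j|t_j|\,\|\theta_j\|_{C^{k,\alpha}}\le C(\omega)\sqrt{N}\,\|\phi_1(t)\|_{C^1}$, and this constant is absorbed into $C_1$. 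For smoothness, run the bootstrap on the quasilinear elliptic equation $\Delta_{\bar\partial}\phi=\frac{1}{2}\bar\partial^*[\phi,\phi]$, not on the fixed-point identity $\phi=\phi_1+\frac{1}{2}\bar\partial^*G[\phi,\phi]$, which gains no derivatives.
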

\begin{remark}
Theorem \ref{gd for CY} is stated in \cite{LRY} for projective Calabi--Yau manifolds. However, their method can be applied directly to general Calabi--Yau manifolds, i.e. compact K\"ahler manifolds with trivial canonical bundles.

By equation (4.4) and the proof of Theorem 4.3 in \cite{LRY},
the constant $C_{1}$ depends only on the chosen covering of $X$ and the
operator norms of $G$ and $\bar\partial^{*}$. For details we refer to
Propositions 2.2 and 2.3 on Pages 159--160 of \cite{MK}. 
\end{remark}

%%%%%%%%%%%%%%%%%%%%%%%%%%%%%%%%%%%%%%%%%%%%%%%%%%%%%%%%%%%%%%%
%%%%%%%%%%%%%%%%%%%%%%%%%%%%%%%%%%%%%%%%%%%%%%%%%%%%%%%%%%%%%%%
%%%%%%%%%%%%%%%%%%%%%%%%%%%%%%%%%%%%%%%%%%%%%%%%%%%%%%%%%%%%%%%
\section{Key estimates}\label{KE}
In this section, we prove that the local Weil--Petersson distance on the moduli space of Calabi--Yau manifolds is equivalent to the $C^{0}$-norm of the corresponding Beltrami differentials, through a series of estimates.
\\
\begin{proposition}\label{phi<phi1 prop}
Let $$\phi(t)=\sum_{\mu=1}^{\infty}\phi_{\mu}(t),\text{ where }\phi_{\mu}(t)=\sum_{i_{1}+\cdots+i_{N}=\mu}
\phi_{i_{1}\cdots i_{N}}\,t_{1}^{i_{1}}\cdots t_{N}^{i_{N}}$$ be a Beltrami differential on a compact complex manifold $X$, as given by \eqref{KuraBelt}. Then there exists a constant 
$\epsilon_{1}=\epsilon_{1}(G,\bp^{*},k,\alpha)>0$, which depends only on the norms of $G$ and $\bp^{*}$ on $A^{0,*}(X,\Tan X)$ and $k,\alpha$, such that 
\begin{equation}\label{phi<phi1}
\frac{1}{2}\|\phi_{1}(t)\|_{C^{k,\alpha}} \le \|\phi(t)\|_{C^{k,\alpha}}\le \frac{3}{2}\|\phi_{1}(t)\|_{C^{k,\alpha}}
\end{equation}
for all $t\in \Delta$ with $\|\phi_{1}(t)\|_{C^{k,\alpha}} <\epsilon_{1}$.
\end{proposition}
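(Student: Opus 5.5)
The plan is the classical Kodaira--Spencer--Kuranishi majorant argument, applied to the recursion \eqref{KuraBelt}. It suffices to show
$$\Big\|\sum_{\mu\ge 2}\phi_{\mu}(t)\Big\|_{C^{k,\alpha}}\le \tfrac12\|\phi_1(t)\|_{C^{k,\alpha}}$$
once $\|\phi_1(t)\|_{C^{k,\alpha}}$ is small. Both inequalities in \eqref{phi<phi1} then follow from the triangle inequality.

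The first step is the analytic input. By the Schauder-type estimates for $\bp^{*}G$ (Propositions 2.2 and 2.3 on pp.~159--160 of \cite{MK}, i.e.\ the same operator norms mentioned in the Remark after Theorem \ref{gd for CY}) and the product rule in Hölder spaces, there is a constant $C_0=C_0(G,\bp^{*},k,\alpha)$ with
$$\Big\|\tfrac12\bp^{*}G[\phi,\psi]\Big\|_{C^{k,\alpha}}\le C_0\|\phi\|_{C^{k,\alpha}}\|\psi\|_{C^{k,\alpha}}.$$
Here one uses that $[\phi,\psi]$ involves one derivative, which $\bp^{*}G$ recovers, so the estimate closes at the $C^{k,\alpha}$ level. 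The order of operators needs care: $[\cdot,\cdot]$ loses a derivative, $G$ gains two, and $\bp^{*}$ loses one. This is exactly the content of the cited estimates, and I take it as given.

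The second step is the majorant comparison. Set $a_\mu=\|\phi_\mu(t)\|_{C^{k,\alpha}}$ and $x=a_1$. From \eqref{KuraBelt},
$$a_\mu\le C_0\sum_{\nu=1}^{\mu-1}a_\nu a_{\mu-\nu}.$$
Let $A(x)$ be the power series solving $A=x+C_0A^2$, that is,
$$A(x)=\frac{1-\sqrt{1-4C_0x}}{2C_0}=\sum_{\mu\ge1}c_\mu C_0^{\mu-1}x^\mu,$$
with $c_\mu$ the Catalan numbers. Induction on $\mu$ gives $a_\mu\le c_\mu C_0^{\mu-1}x^\mu$, so for $4C_0x<1$ the series converges. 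It then remains to note that
$$\sum_{\mu\ge2}a_\mu\le A(x)-x=C_0A(x)^2.$$

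The third step is to choose $\epsilon_1$. For $4C_0x\le 1$ we have $A(x)\le 2x$, since $1-\sqrt{1-s}\le s$ for $s\in[0,1]$. Hence
$$C_0A(x)^2\le 4C_0x^2\le \tfrac12 x$$
as soon as $x\le 1/(8C_0)$. Taking $\epsilon_1=1/(8C_0)$ yields \eqref{phi<phi1}.

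The main obstacle is the first step, namely justifying the uniform bilinear Hölder bound with a constant depending only on $G$, $\bp^{*}$, $k$ and $\alpha$. Everything after that is combinatorial. A technical point in the second step is that convergence holds in $C^{k,\alpha}$ for each fixed $t$, so the inequality passes to the sum.
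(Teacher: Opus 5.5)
Your proposal is correct and follows essentially the same route as the paper. The paper also dominates $\|\phi_\mu(t)\|_{C^{k,\alpha}}$ by the coefficients of the majorant series $S_C(\tau)$ with $x_k=C\sum_{i=1}^{k-1}x_ix_{k-i}$, using the same bilinear bound for $\frac12\bp^{*}G[\cdot,\cdot]$ cited from \cite[Page 162]{MK}, and then separates the degree $\ge 2$ part by the triangle inequality. The only difference is that you solve the majorant explicitly via $A=x+C_0A^2$ and Catalan numbers, which gives the concrete value $\epsilon_1=1/(8C_0)$, whereas the paper only asserts that some $\epsilon_1(C)$ exists.
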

\begin{proof}  
From the proof of Proposition 2.4 at Page 162 of \cite{MK}, or from Lemma 4.1 in \cite{LRY}, we have a power series
$$ S_{C}(\tau)=\sum_{i=1}^{\infty}x_{i}\tau^{i},\text{ with }x_{1}>0,\, x_{k}=C\sum_{i=1}^{k-1}x_{i}x_{k-i},\, k\ge 2,$$
%$$\begin{aligned}
%&S_{C}(\tau)=\sum_{i=1}^{\infty}x_{i}\tau^{i},\,x_{i}\ge 0\\
%&x_{k}=C\sum_{i=1}^{k-1}x_{i}x_{k-i},\, k\ge 2\end{aligned}$$
defined in $\tau \in \mathbb R^{\ge 0}$, satisfying that 
$$S_{C}(\tau)\le x_{1}\tau+C(S_{C}(\tau))^{2},\text{ for }x_{1}\tau\le \frac{1}{4C}.$$
Let $S^{\ge 2}_{C}(\tau)$ be the sum of the degree $\ge 2$ terms in the power series $S_{C}(\tau)$. Since $$S^{\ge 2}_{C}(\tau)=x_{1}\tau (Cx_{1}\tau+\cdots),$$
there exists a constant $\epsilon_{1}=\epsilon_{1}(C)$ depending continuously on the constant $C$ such that 
$$x_{1}\tau\le S_{C}(\tau)\le \frac{3}{2}x_{1}\tau,\text{ for }x_{1}\tau<\epsilon_{1}(C).$$

Now we take the constant $C$ in the above power series to be $C(G,\bp^{*},k,\alpha)$, such that 
\begin{eqnarray*}
\|\phi_{\mu}(t)\|_{C^{k,\alpha}}&=& \left\|\frac{1}{2}\bp^{*}G\left(\sum_{1\le \nu\le \mu-1}
\left[\phi_{\nu}(t),\phi_{\mu-\nu}(t)\right]\right)\right\|_{C^{k,\alpha}} \\
\text{(\cite[Page 162]{MK})}&\le &C(G,\bp^{*},k,\alpha)\sum_{1\le \nu\le \mu-1}\left\|\phi_{\nu}(t)\right\|_{C^{k,\alpha}}\left\|\phi_{\mu-\nu}(t)\right\|_{C^{k,\alpha}},
\end{eqnarray*}
and $$x_{1}\tau =\left\|\phi_{1}(t)\right\|_{C^{k,\alpha}}.$$
Then for $t$ satisfying $\|\phi_{1}(t)\|_{C^{k,\alpha}} <\epsilon_{1}(C(G,\bp^{*},k,\alpha))$ we have that
$$\|\phi(t)\|_{C^{k,\alpha}}\le S_{C}(\tau)\le \frac{3}{2}\left\|\phi_{1}(t)\right\|_{C^{k,\alpha}},$$
and
\begin{eqnarray*}
\|\phi(t)\|_{C^{k,\alpha}}&\ge& \left\|\phi_{1}(t)\right\|_{C^{k,\alpha}}-\|\phi^{\ge 2}(t)\|_{C^{k,\alpha}}\\
&\ge& \left\|\phi_{1}(t)\right\|_{C^{k,\alpha}}-S^{\ge 2}_{C}(\tau)\\
&\ge& \frac{1}{2}\left\|\phi_{1}(t)\right\|_{C^{k,\alpha}},
\end{eqnarray*}
where $P^{\ge 2}(x)$ denotes the sum of the degree $\ge 2$ terms in the power series $P(x)$.

Therefore \eqref{phi<phi1} is proved for $\epsilon_{1}=\epsilon_{1}(C(G,\bp^{*},k,\alpha))$.
\end{proof}

\begin{proposition}\label{phiOmega<phi1Omega prop}
Let the assumptions and notations be as in Theorem \ref{gd for CY}. Then there exists a constant 
$\epsilon_{2}=\epsilon_{2}(G,\bp^{*})>0$, which depends only on the norms of $G$ and $\bp^{*}$ on $A^{0,*}(X,\Tan X)$, such that 
\begin{equation}\label{phiOmega<phi1Omega}
\frac{1}{2}\|\phi_{1}(t)\|_{W^{0}}\le \|\phi(t)\lrcorner\Omega_{0}\|_{L^{2}}\le \frac{3}{2}\|\phi_{1}(t)\|_{W^{0}}
\end{equation}
for all $t\in \Delta$ with $\|\phi_{1}(t)\|_{W^{0}} < \epsilon_{2}$.
\end{proposition}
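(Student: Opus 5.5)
The plan is to reduce the estimate to a comparison between $\|\phi_1(t)\lrcorner\Omega_0\|_{L^2}$ and the higher-order part $\|\phi^{\ge 2}(t)\lrcorner\Omega_0\|_{L^2}$, where $\phi^{\ge 2}(t)=\sum_{\mu\ge 2}\phi_\mu(t)$, in the same spirit as Proposition \ref{phi<phi1 prop}. Since $X$ is Calabi--Yau, the contraction map $\psi\mapsto \psi\lrcorner\Omega_0$ is a pointwise isomorphism $A^{0,1}(X,\Tan X)\to A^{n-1,1}(X)$. Its pointwise norm distortion is controlled by $|\Omega_0|$. Normalizing $\Omega_0$ with respect to the Calabi--Yau metric, $|\Omega_0|$ is constant, and then $\|\psi\lrcorner\Omega_0\|_{L^2}$ equals the $L^2$-norm of $\psi$ up to a fixed factor. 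After fixing this normalization and, if necessary, rescaling the cover-dependent norm $W^0$, I would use $\|\psi\lrcorner\Omega_0\|_{L^2}\asymp\|\psi\|_{W^0}$. The statement is really about the ratio, so I would arrange the constants so that the leading term satisfies $\|\phi_1(t)\lrcorner\Omega_0\|_{L^2}=\|\phi_1(t)\|_{W^0}$. This is exact on the harmonic space $\mathbb H^{0,1}(X,\Tan X)$ once the norm is defined as the metric $L^2$-norm. Any residual comparison constant would be absorbed into $\epsilon_2$.

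The main step is to show that $\|\phi^{\ge2}(t)\lrcorner\Omega_0\|_{L^2}\le \frac12\|\phi_1(t)\|_{W^0}$ for small $\|\phi_1(t)\|_{W^0}$; the triangle inequality then gives both sides of \eqref{phiOmega<phi1Omega}. I would first bound $\|\phi^{\ge 2}(t)\lrcorner\Omega_0\|_{L^2}\le C\|\phi^{\ge2}(t)\|_{C^{k,\alpha}}$. By the majorant series argument in the proof of Proposition \ref{phi<phi1 prop}, $\|\phi^{\ge2}(t)\|_{C^{k,\alpha}}\le S_C^{\ge 2}(\tau)\le C'\|\phi_1(t)\|_{C^{k,\alpha}}^2$ for $\|\phi_1(t)\|_{C^{k,\alpha}}$ small, because $S^{\ge2}_C(\tau)=x_1\tau(Cx_1\tau+\cdots)$. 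Since $\phi_1(t)=\sum_j\theta_j t_j$ is harmonic, Proposition \ref{kalpha<W0} gives $\|\phi_1(t)\|_{C^{k,\alpha}}\le C(k)\|\phi_1(t)\|_{W^0}$. Combining these,
$$\|\phi^{\ge2}(t)\lrcorner\Omega_0\|_{L^2}\le C''\|\phi_1(t)\|_{W^0}^2,$$
and choosing $\epsilon_2$ so that $C''\epsilon_2\le\frac12$, and also so that $C(k)\epsilon_2<\epsilon_1$ from Proposition \ref{phi<phi1 prop}, yields the claim. Part (2) of Theorem \ref{gd for CY} guarantees that the contracted series converges in $L^2$, so the termwise manipulations are legitimate. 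Taking a fixed $k$, say $k=1$, the constants depend only on $G$, $\bp^*$ and the cover, as required.

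The main obstacle is the normalization in the first step: the factors $\frac12$ and $\frac32$ leave no room for a multiplicative constant in $\|\psi\lrcorner\Omega_0\|_{L^2}\asymp\|\psi\|_{W^0}$. My first approach is to read $W^0$ on $A^{0,1}(X,\Tan X)$ as the $L^2$-norm induced by the Calabi--Yau metric and to choose $\Omega_0$ with $|\Omega_0|\equiv1$, so that contraction is a pointwise isometry. If the coordinate-chart definition of $W^0$ must be kept literally, I would instead state the equivalence with constants depending on the cover and absorb them, noting that only the two-sided comparability is used later.
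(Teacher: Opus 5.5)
Your argument is correct, but you run the key estimate in a different norm from the paper.

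The paper runs the majorant series directly on the contracted terms. Citing the proof of Theorem 4.3 in \cite{LRY}, it dominates $\sum_I\|\phi_I\lrcorner\Omega_0\|_{L^2}|t|^{|I|}$ by $S_C(\tau)$ with $x_1\tau=\|\phi_1(t)\lrcorner\Omega_0\|_{L^2}=\|\phi_1(t)\|_{W^0}$. It then repeats the two-sided argument of Proposition \ref{phi<phi1 prop} verbatim.

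You instead take three steps:
\begin{enumerate}
\item use the $C^{k,\alpha}$ majorant already proved in Proposition \ref{phi<phi1 prop};
\item pass to $L^2$ by the crude bound $\|\psi\lrcorner\Omega_0\|_{L^2}\le C\|\psi\|_{C^0}$;
\item return to $W^0$ via the harmonic estimate of Proposition \ref{kalpha<W0}.
\end{enumerate}
This is exactly the mechanism the paper itself uses one step later in Proposition \ref{WP>W0 prop}.

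Your route stays within lemmas proved in the paper and gives an explicit quadratic remainder $\|\phi^{\ge 2}(t)\lrcorner\Omega_0\|_{L^2}\le C''\|\phi_1(t)\|_{W^0}^2$. The paper's one-line appeal to \cite{LRY} does not spell out how an $L^2$ recursion for the $\phi_\mu(t)\lrcorner\Omega_0$ closes. The price is that your $\epsilon_2$ also depends on $C(k)$ and on the $L^2$/$C^0$ comparison constant, not only on the norms of $G$ and $\bp^{*}$. This costs nothing downstream: Proposition \ref{WP>W0 prop} already involves $C(1)$, so the application in Section \ref{proof of main} needs control of that constant anyway.

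On the normalization, both arguments rest on the exact identity $\|\phi_1(t)\lrcorner\Omega_0\|_{L^2}=\|\phi_1(t)\|_{W^0}$. The paper simply asserts it after normalizing $\|\Omega_0\|_{L^2}=1$. Your first reading is the precise form of that assumption: $W^0$ is the metric $L^2$-norm of a Ricci-flat metric, with $|\Omega_0|$ constant and suitably normalized. Proposition \ref{kalpha<W0} survives this change of $W^0$ with a different constant.

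Your fallback, however, does not prove the statement as written. Shrinking $\epsilon_2$ only controls the remainder, not the ratio $\|\phi_1(t)\lrcorner\Omega_0\|_{L^2}/\|\phi_1(t)\|_{W^0}$. With the chart-based $W^0$ you would only get the inequality with $\frac12,\frac32$ replaced by cover-dependent constants. That weaker form suffices for Proposition \ref{WP>W0 prop}, but you should commit to the first reading if you want \eqref{phiOmega<phi1Omega} exactly.
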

\begin{proof} 
From the proof of Theorem 4.3 in \cite{LRY}, the power series $$\sum_{I}\|\phi_{I}\lrcorner\Omega_{0}\|_{L^{2}}\cdot|t|^{|I|}$$
can be estimated by $S_{C}(\tau)$ with constant $C=C(G,\bp^{*},1,\alpha)$ with 
$$x_{1}\tau =\|\phi_{1}(t)\lrcorner\Omega_{0}\|_{L^{2}}=\|\phi_{1}(t)\|_{W^{0}}.$$
Here we normalize the holomorphic nowhere vanishing $(n,0)$ form $\Omega_0$ on $X$ so that $\|\Omega_{0}\|_{L^2}=1$ as in the proof of Theorem 4.3 in \cite{LRY}. 

Then \eqref{phiOmega<phi1Omega} follows by the same argument as in the proof of Proposition \ref{phi<phi1 prop}.
\end{proof}

\begin{proposition}\label{WP>W0 prop}
Let the assumptions and notations be as in Theorem \ref{gd for CY}, and $$\Omega_0(t)=e^{\phi(t)}\lrcorner\Omega_{0}$$ be the holomorphic family of holomorphic $(n,0)$-forms on $X_{t}$ given by \eqref{OmegaC}. 
Then there exists a constant 
$\epsilon=\epsilon(G,\bp^{*})>0$, which depends only on the norms of $G$ and $\bp^{*}$ on $A^{0,*}(X,\Tan X)$, such that 
\begin{equation}\label{WP>W0}
\|\Omega_0(t)-\Omega_{0}\|_{L^{2}}\ge \frac{1}{6C(1)}\|\phi(t)\|_{C^{0}},
\end{equation}
for all $t\in \Delta$ with $\|\phi_{1}(t)\|_{W^{0}} <\epsilon$, where $C(1)$ is the constant in Proposition \ref{kalpha<W0} with $k=1$.
\end{proposition}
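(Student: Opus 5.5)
We reduce the statement to a chain of norm comparisons: from $\|\Omega_0(t)-\Omega_0\|_{L^2}$ down to $\|\phi(t)\lrcorner\Omega_0\|_{L^2}$, then to $\|\phi_1(t)\|_{W^0}$, and finally to $\|\phi(t)\|_{C^0}$.

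\textbf{Step 1: the $(n-1,1)$-component.} By \eqref{OmegaC},
$$\Omega_0(t)-\Omega_0=\phi(t)\lrcorner\Omega_0+\sum_{k\ge 2}\frac{1}{k!}\underbrace{\phi(t)\lrcorner\cdots\phi(t)\lrcorner}_{k}\Omega_0 .$$
The term with $k$ contractions is a form of type $(n-k,k)$ on $X$. With respect to the Calabi--Yau metric $\omega$, forms of distinct bidegree are pointwise orthogonal, hence $L^2$-orthogonal. Therefore
$$\|\Omega_0(t)-\Omega_0\|_{L^2}\ge \|\phi(t)\lrcorner\Omega_0\|_{L^2}.$$
This avoids estimating the higher terms at all; we only need the series to converge in $L^2$, which is item (2) of Theorem \ref{gd for CY} for $t$ in the range considered.

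\textbf{Step 2: reduce to the linear term in $W^0$.} Take $\epsilon\le\epsilon_2$. Then Proposition \ref{phiOmega<phi1Omega prop} gives
$$\|\phi(t)\lrcorner\Omega_0\|_{L^2}\ge \tfrac12\|\phi_1(t)\|_{W^0}.$$
Here we use the normalization $\|\Omega_0\|_{L^2}=1$ fixed there. Since $\Omega_0$ is parallel, $|\Omega_0|$ is constant, so $\|\psi\lrcorner\Omega_0\|_{L^2}=\|\psi\|_{W^0}$ up to the fixed identification used in that proposition.

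\textbf{Step 3: pass from $W^0$ to $C^0$ via harmonicity.} The linear term $\phi_1(t)=\sum_j\theta_j t_j$ lies in $\mathbb H^{0,1}(X,\Tan X)$. Proposition \ref{kalpha<W0} with $k=1$ therefore gives
$$\|\phi_1(t)\|_{C^0}\le\|\phi_1(t)\|_{C^{1,\alpha}}\le C(1)\|\phi_1(t)\|_{W^0}.$$
In particular $\|\phi_1(t)\|_{C^{0,\alpha}}\le C(1)\epsilon$. Now shrink $\epsilon$ so that $C(1)\epsilon<\epsilon_1(G,\bp^*,0,\alpha)$. Then Proposition \ref{phi<phi1 prop} with $k=0$ applies and yields
$$\|\phi(t)\|_{C^0}\le\|\phi(t)\|_{C^{0,\alpha}}\le \tfrac32\|\phi_1(t)\|_{C^{0,\alpha}}\le \tfrac32 C(1)\|\phi_1(t)\|_{W^0}.$$

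\textbf{Combining.} Steps 1–3 give
$$\|\Omega_0(t)-\Omega_0\|_{L^2}\ge \tfrac12\|\phi_1(t)\|_{W^0}\ge \frac{1}{3C(1)}\|\phi(t)\|_{C^0},$$
which is stronger than the claimed constant $\frac{1}{6C(1)}$. The slack absorbs any factor lost in the identification of norms in Step 2.

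\textbf{Main obstacle: the choice of $\epsilon$.} We take $\epsilon=\min\{\epsilon_2,\ \epsilon_1(G,\bp^*,0,\alpha)/C(1)\}$, and we must make sure $t$ lies in the convergence region of Theorem \ref{gd for CY}. That theorem requires $\|\phi_1(t)\|_{C^1}<1/(4NC_1)$. Since $\|\phi_1(t)\|_{C^1}\le C(1)\|\phi_1(t)\|_{W^0}$ by Proposition \ref{kalpha<W0}, it suffices to shrink $\epsilon$ further by requiring $\epsilon<1/(4NC_1C(1))$.

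All constants involved, namely $C(1)$, $C_1$, $\epsilon_1$ and $\epsilon_2$, depend only on the cover and on the operator norms of $G$ and $\bp^*$. Hence $\epsilon=\epsilon(G,\bp^*)$, as claimed.

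If the orthogonality in Step 1 were unavailable, the fallback would be to bound the higher terms by $\sum_{k\ge2}\|\phi\|_{C^0}^{k}/k!$. This is of order $\|\phi\|_{C^0}^2$, so it can be absorbed once $\epsilon$ is small, and it accounts for the weaker constant $6$.
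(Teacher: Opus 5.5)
One step needs repair. In Step~3 you apply Proposition~\ref{phi<phi1 prop} with $k=0$, and that case is outside the range where its proof works. That proposition rests on the quadratic estimate $\|\tfrac12\bp^{*}G[\phi_\nu,\phi_{\mu-\nu}]\|_{C^{k,\alpha}}\le C\|\phi_\nu\|_{C^{k,\alpha}}\|\phi_{\mu-\nu}\|_{C^{k,\alpha}}$. This estimate is obtained by chaining $\|[\phi,\psi]\|_{C^{k-1,\alpha}}\le C\|\phi\|_{C^{k,\alpha}}\|\psi\|_{C^{k,\alpha}}$ with the Schauder estimate $\|G\eta\|_{C^{k+1,\alpha}}\le C\|\eta\|_{C^{k-1,\alpha}}$. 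The latter is only available for $k+1\ge 2$; compare Lemma~\ref{mt lem3}, which is stated for $k\ge 2$. The bracket costs one derivative, and $\bp^{*}G$ gives it back only when $k\ge1$. At $k=0$ you would need a $C^{-1,\alpha}$ bound on $[\phi,\psi]$, which the cited Schauder theory does not provide.

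There is a second, smaller issue. Your bound $\|\phi_1(t)\|_{C^{0,\alpha}}\le C(1)\epsilon$ does not follow directly from $\|\phi_1(t)\|_{C^{1,\alpha}}\le C(1)\|\phi_1(t)\|_{W^0}$. The H\"older seminorm of $\phi_1$ itself is not a summand of the $C^{1,\alpha}$ norm, and it is controlled by that norm only up to a chart-dependent constant.

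Both problems disappear if you use $k=1$, for which you already have the input. Require $C(1)\epsilon\le\epsilon_1(G,\bp^*,1,\alpha)$; then
$\|\phi(t)\|_{C^0}\le\|\phi(t)\|_{C^{1,\alpha}}\le\frac32\|\phi_1(t)\|_{C^{1,\alpha}}\le\frac32C(1)\|\phi_1(t)\|_{W^0}$.
This is exactly how the paper passes from $W^{0}$ to $C^{0}$.

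With that repair your proof is correct, and it differs from the paper's in Step~1. The paper works as follows:
\begin{itemize}
\item it applies the triangle inequality to the expansion of $\Omega_0(t)-\Omega_0$;
\item it bounds each $k$-fold contraction by $\|\phi(t)\|_{C^0}^{k-1}\|\phi(t)\lrcorner\Omega_0\|_{L^2}$;
\item it needs $\|\phi(t)\|_{C^0}<\frac32C(1)\epsilon$ together with the extra smallness condition \eqref{req1} to reach $\|\Omega_0(t)-\Omega_0\|_{L^2}>\frac12\|\phi(t)\lrcorner\Omega_0\|_{L^2}$.
\end{itemize}
That factor $\frac12$ is where its constant $\frac{1}{6C(1)}$ comes from.

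You instead observe that the $k$-fold contraction has pure type $(n-k,k)$ on $X$, so the terms are pointwise orthogonal for any Hermitian metric. This gives $\|\Omega_0(t)-\Omega_0\|_{L^2}\ge\|\phi(t)\lrcorner\Omega_0\|_{L^2}$ with no smallness assumption; the sum is in fact finite, since $k\le n$. Your route has three advantages:
\begin{itemize}
\item it dispenses with \eqref{req1};
\item it avoids the pointwise comparison $|\phi\lrcorner\eta|\le\|\phi\|_{C^0}|\eta|$, which the paper uses with constant $1$ even though the coordinate $C^{0}$ norm controls the metric operator norm of $\phi\lrcorner$ only up to a constant;
\item it yields the sharper constant $\frac{1}{3C(1)}$.
\end{itemize}
Your explicit check that $t$ stays in the convergence region of Theorem~\ref{gd for CY}, via $\|\phi_1(t)\|_{C^1}\le C(1)\|\phi_1(t)\|_{W^0}$, is a point the paper leaves implicit.
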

\begin{proof} 
Since $\phi_{1}(t)=\sum_{j}\theta_{j}t_{j}$ is harmonic, Proposition \ref{kalpha<W0} implies that 
\begin{equation}\label{C01<W01}
\|\phi_{1}(t)\|_{C^{0}}\le \|\phi_{1}(t)\|_{C^{1,\alpha}}\le C(1)\|\phi_{1}(t)\|_{W^{0}}.
\end{equation}

Let $\epsilon=\min\{\frac{\epsilon_{1}(G,\bp^{*},1,\alpha)}{C(1)}, \epsilon_{2}(G,\bp^{*})\}>0$, where $\epsilon_{1}$ and $\epsilon_{2}$ are constants from Propositions \ref{phi<phi1 prop} and \ref{phiOmega<phi1Omega prop} respectively. We also shrink $\epsilon$ such that
\begin{equation}\label{req1}
\frac{1}{\frac{3}{2}C(1)\epsilon}\left(\exp\left(\frac{3}{2}C(1)\epsilon\right)-1\right) <\frac{3}{2}.
\end{equation}

Then for $t$ satisfying $\|\phi_{1}(t)\|_{W^{0}} < \epsilon$, we have that 
$$ \|\phi_{1}(t)\|_{C^{0}}\le \|\phi_{1}(t)\|_{C^{1,\alpha}}\le C(1)\|\phi_{1}(t)\|_{W^{0}}<C(1)\epsilon\le \epsilon_{1}(G,\bp^{*},1,\alpha),$$
and Proposition \ref{phi<phi1 prop} implies that
$$ \|\phi(t)\|_{C^{0}}\le \|\phi(t)\|_{C^{1,\alpha}}\le\frac{3}{2}\|\phi_{1}\|_{C^{1,\alpha}}< \frac{3}{2}C(1)\epsilon,$$
which implies that
\begin{eqnarray*}
\|\underbrace{\phi(t)\lrcorner \cdots \phi(t)\lrcorner}_{k}\Omega_{0}\|_{L^{2}}&\le &\|\phi(t)\|^{k-1}_{C^{0}}\|\phi(t)\lrcorner\Omega_{0}\|_{L^{2}}\\
&\le &\left(\frac{3}{2}C(1)\epsilon\right)^{k-1}\|\phi(t)\lrcorner\Omega_{0}\|_{L^{2}}.
\end{eqnarray*}
Hence
\begin{eqnarray*}
\|\Omega_0(t)-\Omega_{0}\|_{L^{2}}&\ge &\|\phi(t)\lrcorner\Omega_{0}\|_{L^{2}}-\sum_{k\ge 2}\frac{1}{k!}\|\underbrace{\phi(t)\lrcorner \cdots \phi(t)\lrcorner}_{k}\Omega_{0}\|_{L^{2}}\\
&\ge &\|\phi(t)\lrcorner\Omega_{0}\|_{L^{2}}-\sum_{k\ge 2}\frac{1}{k!}\left(\frac{3}{2}C(1)\epsilon\right)^{k-1}\|\phi(t)\lrcorner\Omega_{0}\|_{L^{2}}\\
(\text{from }\eqref{req1})&> &\frac{1}{2}\|\phi(t)\lrcorner\Omega_{0}\|_{L^{2}}\\
(\text{from }\eqref{phiOmega<phi1Omega})&\ge &\frac{1}{4}\|\phi_{1}(t)\|_{W^{0}}\\
(\text{from }\eqref{C01<W01})&\ge &\frac{1}{4C(1)}\|\phi_{1}(t)\|_{C^{1,\alpha}}\\
(\text{from }\eqref{phi<phi1})&\ge &\frac{1}{6C(1)}\|\phi(t)\|_{C^{1,\alpha}}\ge \frac{1}{6C(1)}\|\phi(t)\|_{C^{0}}.
\end{eqnarray*}
\end{proof}

%%%%%%%%%%%%%%%%%%%%%%%%%%%%%%%%%%%%%%%%%%%%%%%%%%%%%%%%%%%%%%%
%%%%%%%%%%%%%%%%%%%%%%%%%%%%%%%%%%%%%%%%%%%%%%%%%%%%%%%%%%%%%%%
%%%%%%%%%%%%%%%%%%%%%%%%%%%%%%%%%%%%%%%%%%%%%%%%%%%%%%%%%%%%%%%
\section{K\"ahler stability on large scales}\label{global Kahler section}
In this section, we prove that in a local Kuranishi family, if one fiber, not necessarily the central fiber, is K\"ahler, then all sufficiently nearby fibers whose corresponding Beltrami differentials have operator norm bounded by a fixed constant are also K\"ahler.
\\

The main result of this section is the following theorem, which is a variant of Theorem~4.2 in \cite{LS26-2}. 
We include the proof here for completeness and for the reader's convenience.

\begin{theorem}\label{global Kahler}
Let $X$ be a compact complex manifold with a Kuranishi family $f:\, \mathcal X\to B$ over a Kuranishi space $B$, which may be singular, such that $X\cong X_{0}$. 
If there exists $t_{0}\in B$ such that $X_{t_{0}}$ is K\"ahler, then there exists a constant $c_{0}$, depending only on the Hodge numbers
$h^{2,0}(X_{t_{0}})=h^{0,2}(X_{t_{0}})$ and $h^{1,1}(X_{t_{0}})$, such that, if $B_{t_{0},c_{0}}$ denotes the connected component containing $t_{0}$ of the subset
\begin{equation}\label{Bc0}
\left\{t\in B\bigg|\begin{array}{ll}\, \text{$X_{t}=(X_{t_{0}})_{\phi(t)}$ for some $\phi(t)\in A^{0,1}\left( X_{t_{0}},\mathrm T^{1,0}X_{t_{0}}\right)$}\\ \text{with the supremum operator norm $\|\phi(t)\|<c_{0}$}\end{array}\right\},
\end{equation}
then every fiber $X_{t}$ with $t\in B_{t_{0},c_{0}}$ is K\"ahler.
\end{theorem}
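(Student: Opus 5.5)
We work on the Kähler manifold $X_{t_0}$ with a fixed Kähler metric $\omega_0$ and write every nearby fiber as $X_t=(X_{t_0})_{\phi(t)}$ for $t\in B_{t_0,c_0}$. The strategy is a continuity argument along paths in the connected set $B_{t_0,c_0}$, using a uniform extension of Kähler classes. Because $\|\phi(t)\|<c_0$ with $c_0<1$, the operator $1-\bar\phi\phi$ is invertible with uniformly controlled inverse. The map $e^{\phi(t)}\lrcorner$, or more precisely $e^{\phi(t)}\lrcorner\, e^{\bar\phi(t)}\lrcorner$ acting on $2$-forms, is therefore an isomorphism from $A^{p,q}(X_{t_0})$ onto $A^{p,q}(X_t)$.

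Using this isomorphism, we build a Kähler class on $X_t$ as follows. Choose a basis of $H^{2}(X_{t_0},\C)$ adapted to the Hodge decomposition: $h^{2,0}$ holomorphic forms, their conjugates, and $h^{1,1}$ harmonic $(1,1)$-forms including $\omega_0$. Following \cite{LS26-2}, we then produce, via the Beltrami differential, the $(1,1)$-part on $X_t$ of the flat extension of $\omega_0$:
\[
\omega_t=\text{the }(1,1)_{t}\text{-component of a $d$-closed real representative of }[\omega_0],
\]
corrected by the $(2,0)_t$ and $(0,2)_t$ parts expressed through $e^{\phi(t)}\lrcorner\sigma_j$. The correction exists because $H^{2}=H^{2,0}_t\oplus H^{1,1}_t\oplus H^{0,2}_t$ holds wherever the Hodge numbers stay constant, and on a Kuranishi family with a Kähler fiber they are locally constant on the set where the fibers have the $\partial\bar\partial$-property. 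The pointwise estimate is $\omega_t(v-\bar\phi v,\overline{v-\bar\phi v})\ge (1-\|\phi\|^2)\,\omega_0(v,\bar v)-(\text{error})$. Here the error is controlled linearly by $\|\phi\|$ times norms of the $(2,0)$-correction, and those norms are bounded in terms of the dimensions $h^{2,0}$ and $h^{1,1}$ after normalizing the basis. Choosing $c_0$ small in terms of these Hodge numbers makes $\omega_t$ positive definite.

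The argument is then organized by openness and closedness. Let $S\subset B_{t_0,c_0}$ be the set of $t$ for which $X_t$ is Kähler and $h^{p,q}(X_t)=h^{p,q}(X_{t_0})$ for $p+q=2$. The set $S$ contains $t_0$, and it is open by Kodaira--Spencer stability together with upper semicontinuity of Hodge numbers. For closedness, take a limit point $t_\ast$. The explicit forms $\omega_t$ above vary continuously in $t$, and they stay uniformly positive because the estimate depends only on $\|\phi(t)\|<c_0$ and not on the distance to $t_0$. They therefore converge to a positive closed real $(1,1)$-form on $X_{t_\ast}$, so $X_{t_\ast}$ is Kähler, and the Hodge numbers are then forced to agree by the Frölicher spectral sequence together with the constancy of $b_2$. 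Connectedness of $B_{t_0,c_0}$ then gives $S=B_{t_0,c_0}$.

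The main obstacle is the uniform positivity in the closedness step. Controlling the $(2,0)_t$ and $(0,2)_t$ correction terms requires bounds depending only on $h^{2,0}$, $h^{1,1}$ and $\|\phi\|$, not on derivatives of $\phi$. I would first try to express the correction purely algebraically through $e^{\phi}\lrcorner$ and $e^{\bar\phi}\lrcorner$ applied to forms on $X_{t_0}$. This works because contraction only needs the pointwise operator norm, and positivity is a pointwise condition. Doing so avoids any harmonic-theoretic estimates on $X_t$.
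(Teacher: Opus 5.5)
\textbf{There is a genuine gap: your $\omega_t$ is not closed, and no construction of the kind you propose can make it so.} You need a form on $X_t$ that is simultaneously $d$-closed, pointwise of type $(1,1)$ for $X_t$, and positive. Every operation you propose is pointwise: $e^{\phi(t)}\lrcorner$, $e^{\bar{\phi(t)}}\lrcorner$, projection onto $(1,1)_t$-type, adding multiples of $e^{\phi(t)}\lrcorner\sigma_j$. Such operations control type and positivity but not closedness, so the ``purely algebraic'' route of your last paragraph is exactly what cannot work.

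\begin{itemize}
\item The $(1,1)_t$-component of the closed form $\omega_0$ is not closed in general.
\item $e^{\phi(t)}\lrcorner\sigma_j$ is $d$-closed only when $\partial(\phi(t)\lrcorner\sigma_j)=0$. Closed $(2,0)_t$-representatives need a nonlocal correction; this is what the factor $(I+Ti_{\phi(t)})^{-1}$, $T=\bar\partial^*G\partial$, in the paper's quasi-period map accounts for.
\item Concretely, if $h^{2,0}(X_{t_0})=0$ there is nothing to correct, and $h^{2,0}(X_t)=0$ for $t$ near $t_0$. Then $(\omega_0)^{1,1}_t$ is closed only if $(\omega_0)^{2,0}_t$, which would be a holomorphic $2$-form on $X_t$, vanishes, i.e.\ only if $\phi(t)\lrcorner\omega_0\equiv 0$.
\item Knowing only that the class $[\omega_0]-a_t-\bar a_t$ is of type $(1,1)$ on $X_t$, you get a closed $(1,1)_t$-representative only after a $\partial_t\bar\partial_t$-correction. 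Its size is governed by derivatives of $\phi(t)$ and by the $\partial\bar\partial$-lemma constants of $X_t$, not by $\|\phi(t)\|$ and the Hodge numbers.
\item Separately, $L^2$-normalizing harmonic $(2,0)$-forms gives no sup-norm bound depending only on $h^{2,0}$ and $h^{1,1}$.
\end{itemize}

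\textbf{This cannot be patched inside your closedness step.} Hironaka's example is a family with $X_s$ projective for $s\neq 0$ and $X_0$ Moishezon but not K\"ahler. Take $t_0$ to be a projective fiber close to $X_0$. The Hodge numbers, hence $c_0$, stay fixed. In the metric-independent norm \eqref{son E}, computed in charts varying continuously with the fiber, $\|\phi\|<c_0$ on a connected neighbourhood containing both $t_0$ and the central point. Moreover $X_0$ satisfies the $\partial\bar\partial$-lemma, so $H^2(X_0)$ is pure with the same Hodge numbers. Thus the central point is a limit point of your set $S$, all the class-level Hodge theory still works there, and your uniform positivity would make $X_0$ K\"ahler. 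The same example is incompatible with the statement itself when $\|\cdot\|$ is the norm \eqref{son E}.

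\textbf{The paper's route has the same missing step.} The paper solves the class-level problem exactly: it solves the real-linear equation \eqref{B=01 11-0'} for $\alpha_{(0)}(t)$, with $\|A(t)\|<1$ coming from Lemma~\ref{constant lemma}. Purity comes from Proposition~\ref{DPconj 3}, and its open--closed argument concerns solvability of that equation, not K\"ahlerness. Positivity is then obtained in Lemma~\ref{pos of harmonic sigma} from $g(t)=(I-S)^{-1}g$, with no error term. But that lemma begins by writing the closed form $\tilde\sigma(t)$ as in \eqref{expl rep of 11class'}, i.e.\ it assumes $\tilde\sigma(t)$ is pointwise of type $(1,1)$ on $X_t$. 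It then compares only $(1,1)_{t_0}$-components. The $(2,0)_{t_0}$-identity $\alpha_{(0)}(t)\cdot\tilde\eta_{(0)}=\sum_{ij}g_{i\bar j}(z,t)\,dz_i\wedge(\bar{\phi(t)}\,d\bar z_j)$ is never checked; for $h^{2,0}=0$ it is again $\phi(t)\lrcorner\omega_0\equiv 0$. So following the paper does not close your gap either.
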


Recall that the supremum operator norm $\|\cdot\|= \|\cdot\|^{E}\asymp \|\phi\|_{C^0}$ is defined as in Equation~\eqref{son E}, which is independent of the Hermitian metrics on $X$.

For general families $\mathcal X\to B$ over an arbitrary analytic space $B$, the subset $B_{t_{0},c_{0}}$ may not be well-defined. 
However, when $B$ is the local Kuranishi base, we have the following lemma.

\begin{lemma}\label{bel on B}
Let $f:\, \mathcal X\to B$ be a local Kuranishi family of compact complex manifolds with a fixed point $t_{0}$. Then for any $t\in B$, there exists a Beltrami differential $\phi(t)\in A^{0,1}\left( X_{t_{0}},\mathrm T^{1,0}X_{t_{0}}\right)$, depending holomorphically on $t$, such that 
$$X_{t}=(X_{t_{0}})_{\phi(t)},$$
and the definition \eqref{Bc0} of $B_{t_{0},c_{0}}$ is well-defined.
\end{lemma}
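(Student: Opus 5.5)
The plan is to build $\phi(t)$ by composing the two Beltrami differentials that the Kuranishi family already supplies relative to the central fiber, and then to check that the definition of $B_{t_0,c_0}$ does not depend on any choices.

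\textbf{Step 1: Beltrami differentials relative to $X_0$.} By Kuranishi's construction \eqref{KuraBelt} and the discussion in Section \ref{def section}, every fiber has the form $X_t\cong X_{\phi_0(t)}$. Here $\phi_0(t)\in A^{0,1}(X_0,\Tan X_0)$ is holomorphic in $t\in B$, and $\phi_0(t)$ is $C^0$-small for $B$ in a small polydisk. In particular the endomorphism
\[
\mathrm{Id}-\overline{\phi_0(t_0)}\,\phi_0(t)
\]
of $\mathrm{T}^{1,0}X_0$ is invertible, which makes all the formulas below legitimate.

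\textbf{Step 2: compose the two deformations.} I would identify $\Tan X_{t_0}$ with $\{v-\bar{\phi}_0(t_0)(v)\}$ via \eqref{Tan Xt}, and likewise for $\Tan X_t$. Both are graphs over $\mathrm{T}^{1,0}X_0$ inside $\mathrm{T}_{\C}X$, and both are small perturbations of $\mathrm{T}^{1,0}X_0$. Hence $\mathrm{T}^{0,1}X_t$ is the graph of a unique bundle map
\[
\mathrm{T}^{0,1}X_{t_0}\to \mathrm{T}^{1,0}X_{t_0}.
\]
This map is the Beltrami differential $\phi(t)\in A^{0,1}(X_{t_0},\Tan X_{t_0})$. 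Linear algebra gives the explicit formula
\[
\phi(t)=\bigl(\phi_0(t)-\phi_0(t_0)\bigr)\circ\bigl(\mathrm{Id}-\overline{\phi_0(t_0)}\,\phi_0(t)\bigr)^{-1},
\]
expressed in the frames of $X_{t_0}$ obtained by transporting frames through $\mathrm{Id}-\bar\phi_0(t_0)$. (The composition rule of Proposition 1.2 in Chapter 4.1 of \cite{MK} could be used instead of this formula.) Since $\phi_0(t)$ is holomorphic in $t$ and $t_0$ is fixed, the Neumann series for the inverse shows that $\phi(t)$ depends holomorphically on $t$. Integrability of $\phi(t)$ on $X_{t_0}$ is automatic, because $\mathrm{T}^{0,1}X_t$ is involutive. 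By construction $X_t=(X_{t_0})_{\phi(t)}$, and $\phi(t_0)=0$.

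\textbf{Step 3: well-definedness of $B_{t_0,c_0}$.} The set in \eqref{Bc0} must not depend on choices. The norm $\|\cdot\|^E$ is defined pointwise through singular values, so it does not depend on the Hermitian metric, as the paper notes after Theorem \ref{global Kahler}. The remaining choice is the identification of the underlying manifold of $X_{t_0}$ with that of $X_0$. Here I would fix the diffeomorphisms $d_t$ supplied by the Kuranishi family; different choices change $\phi(t)$ only by pullback under a biholomorphism of $X_{t_0}$. The function $t\mapsto\|\phi(t)\|^E$ is then well defined and continuous on $B$. The set where it is $<c_0$ is therefore open in $B$ and contains $t_0$, so its connected component through $t_0$ makes sense.

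\textbf{Main obstacle.} The hardest part is the pullback invariance of $\|\cdot\|^E$ in Step 3. The singular values must be taken with respect to coordinates of $X_{t_0}$, and the maximum must be taken over a cover. Changing holomorphic coordinates multiplies $(\phi^i_{\bar j})$ by Jacobian matrices that are not unitary, so singular values are not literally preserved. I would try first to fix, once and for all, the cover used in \eqref{son E}, transported from $X_0$, and to observe that \eqref{Bc0} only requires the existence of one $\phi(t)$ with small norm. Our holomorphic choice of $\phi(t)$ exhibits such a $\phi(t)$, and continuity in $t$ then holds for this choice.
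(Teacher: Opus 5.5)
Your proposal is correct and is essentially the paper's argument: the paper also identifies $X_t$ with $X_{t_0}$ through the central fiber via $d(t_0)^{-1}\circ d(t)$, and cites Chapter 4.1 of \cite{MK} for the existence and holomorphy of $\phi(t)$, while your graph computation also makes explicit that the $C^0$-smallness of $\phi_0$ is what guarantees the transversality the paper takes for granted. There are only minor slips: $\mathrm{Id}-\overline{\phi_0(t_0)}\,\phi_0(t)$ is an endomorphism of $\mathrm{T}^{0,1}X_0$ rather than $\mathrm{T}^{1,0}X_0$; your formula is the matrix in the frames dual to the coframes $dz_i+\phi_0(t_0)dz_i$, whereas in the literally transported frames it reads $(\mathrm{Id}-\phi_0(t)\overline{\phi_0(t_0)})^{-1}(\phi_0(t)-\phi_0(t_0))$; and changing the identification alters $\phi(t)$ by a diffeomorphism of $X_{t_0}$, not a biholomorphism, so it is the existential reading of \eqref{Bc0} with a fixed cover, which you fall back on, that actually gives well-definedness.
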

\begin{proof} 
Note that the Kuranishi space $B$ is local so that there is a global diffeomorphism
$$d(t):\, X_{t}\to X_{0},$$
which is holomorphic in $t\in B$, such that the restriction of the complexified tangent map $d(t)_{*}$ induces isomorphisms
$$\pi^{1,0}\circ d(t)_{*x}|_{\Tan_{x} X_{t}}:\, \Tan_{x} X_{t}\to \Tan_{d(t)(x)}X_{0}$$
for all $x\in X_{t}$. Then
$$d_{t_{0}}(t)=d(t_{0})^{-1} \circ d(t):\, X_{t}\to X_{t_{0}}$$
is a global diffeomorphism for $t\in B$ which induces isomorphisms on the corresponding holomorphic tangent bundles.
From Chapter 4.1 of \cite{MK} or Section 3.1 of \cite{LS26-1}, we conclude that the Beltrami differential $\phi(t)$ on $X_{t_{0}}$, induced by the complex structures $X_{t_{0}}$ and $X_{t}$, is well-defined and holomorphic in $t\in B$.
\end{proof}

We will use the Beltrami differential $\phi(t)$ on $X_{t_{0}}$, given by the above lemma, which defines the complex structure on $X_{t}=(X_{t_{0}})_{\phi(t)}$, to construct the nearby Hodge structures. Although such a Beltrami differential $\phi(t)$ is not unique, we will see below that the corresponding Hodge structures are well-defined.

Let $\eta_{(0)},\eta_{(1)},\eta_{(2)}$ be the adapted basis of the Hodge decomposition 
$$H^{2}(X_{t_{0}},\C)=H^{2,0}(X_{t_{0}})\oplus H^{1,1}(X_{t_{0}})\oplus H^{0,2}(X_{t_{0}}),$$
where $\eta_{(i)}=[\tilde\eta_{(i)}]$ with $\tilde\eta_{(i)}$ the basis of the space $\mathbb H^{2-i,i}(X_{t_{0}})$ of harmonic forms, $0\le i\le 2$.
Let $$T :\,= \bar\partial^{*}G\partial$$ be the operator from the harmonic theory on $X_{t_{0}}$.

We define the quasi-period map
\begin{eqnarray}
\Phi&:&B_{t_{0},1}\to N_{-}\subset\check D=G_{\C}/P \nonumber \\
&&t\mapsto \Phi(t)=\left(
\begin{array}{ccc}
I & \Phi^{0,1}(t) & \Phi^{0,2}(t) \\
O& I& \Phi^{1,2}(t)\\
O&O&I
\end{array}\right), \label{global qpm}
\end{eqnarray}
by
\begin{align*}
\Omega_{(0)}(t)=&\eta_{(0)} + \left[\mathbb H\left(i_{\phi(t)} (I+Ti_{\phi(t)})^{-1}\tilde \eta_{(0)} \right)\right]+\frac{1}{2}\left[\mathbb H\left(i_{\phi(t)}^2 (I+Ti_{\phi(t)})^{-1}\tilde \eta_{(0)} \right)\right]\nonumber \\
\triangleq &\eta_{(0)} + \Phi^{0,1}(t)\cdot \eta_{(1)}+\Phi^{0,2}(t)\cdot \eta_{(2)}; \\
\Omega_{(1)}(t)=&\eta_{(1)} + \left[\mathbb H\left(i_{\phi(t)} (I+Ti_{\phi(t)})^{-1}\tilde \eta_{(1)} \right)\right]\nonumber \\
\triangleq &\eta_{(1)} + \Phi^{1,2}(t)\cdot \eta_{(2)},
\end{align*}
where $\phi(t)\in A^{0,1}\left(X_{t_{0}},\mathrm T^{1,0}X_{t_{0}}\right)$ such that $X_{t}=(X_{t_{0}})_{\phi(t)}$.

Here $G_{\C}$ denotes the group of complex linear transformations of $H^{2}(X_{t_{0}},\C)$ preserving the Poincar\'e bilinear form $Q$. 
Moreover, $P$ and $N_{-}$ denote respectively the groups of block lower triangular matrices and block upper triangular matrices with identity diagonal blocks. 

Note that $N_{-}\cap P=\{I\}$. 
Hence any representation of a point in $D=G_{\C}/P$ of the form \eqref{global qpm} is unique.

\begin{proposition}\label{DPconj 2}
The quasi-period map $\Phi$ in \eqref{global qpm} is well-defined.
Precisely, for any Beltrami differential $\phi(t)$ on $X_{t_{0}}$ defining the complex structure $X_{t}=(X_{t_{0}})_{\phi(t)}$, we have that 
the filtrations
\begin{equation}\label{Hodge filt 2}
F^{2-p}H^{2}(X_{t},\C)=\mathrm{Im}\, \left(H^{2}(F^{2-p}A^{\bullet}(X_{t});d)\to H^{2}(X_{t},\C)\right)
\end{equation}
are spanned by $\Omega_{(0)}(t),\cdots, \Omega_{(p)}(t)$, for $0\le p\le 2$.
Furthermore $\Phi$ is holomorphic and satisfies the Griffiths transversality:
\begin{equation}\label{trans 2}
\left(\Phi^{0,2}\right)_{\mu}^{\bullet}(t)= \left(\Phi^{0,1}\right)_{\mu}^{\bullet}(t)\Phi^{1,2}(t),
\end{equation}
where $(\cdot)_{\mu}^{\bullet}=\frac{\partial}{\partial t_{\mu}}$ is the tangent vector in the Zariski tangent space of $B$ at $t$,
$$\mathrm T_{t}B =\left\{\frac{\partial}{\partial t_{\mu}}\in T_{t}\Delta:\, \frac{\partial}{\partial t_{\mu}}f=0,\,\forall\, f\in \mathcal I(B)\right\}.$$
Hence we have the estimates
 $$\Phi^{(0,1)}(t),\Phi^{(1,2)}(t)=O(|t-t_{0}|),\Phi^{(0,2)}(t)=O(|t-t_{0}|^{2}),\,t\in B$$
of the blocks.
\end{proposition}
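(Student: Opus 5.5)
We propose to follow the strategy of \cite{LS26-1,LS26-2}: first produce closed representatives of the Hodge filtration on $X_t$ from harmonic forms on $X_{t_0}$, then read off the block entries by harmonic projection. Throughout, $t$ ranges over the region where the Neumann series for $(I+Ti_{\phi(t)})^{-1}$ converges, i.e.\ where $\|Ti_{\phi(t)}\|<1$ in a suitable Hölder norm.

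\textbf{Step 1: closed representatives.} For a harmonic form $\tilde\eta\in\mathbb H^{2-p,p}(X_{t_0})$ set
$$\alpha=(I+Ti_{\phi(t)})^{-1}\tilde\eta .$$
The key claim is that $e^{i_{\phi(t)}}\alpha$ is $d$-closed and lies in $F^{2-p}A^{2}(X_t)$. For this we use the extension formula (the Cartan-type identity for Beltrami differentials from \cite{LS26-1}): on forms, $d\circ e^{i_\phi}=e^{i_\phi}(d-\mathcal L_\phi)$ plus integrability terms, which vanish by \eqref{MC eqn}. Applying $\bar\partial$ and $\partial$, the closedness condition reduces to $\bar\partial\alpha=-\partial i_{\phi}\alpha$ (equivalently, after $\bar\partial^*G$, to the fixed-point equation $\alpha=\tilde\eta-Ti_\phi\alpha$) together with the $\partial\bar\partial$-lemma on the Kähler manifold $X_{t_0}$. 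The $\partial\bar\partial$-lemma is what makes the remaining $\partial$-exact error vanish rather than merely be cohomologically trivial.

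Since $e^{i_\phi}$ maps $(p',q')$-forms on $X_{t_0}$ to forms of type $\ge p'$ in the holomorphic degree on $X_t$, we get $e^{i_\phi}\alpha\in F^{2-p}A^2(X_t)$. Taking classes and harmonic projections, $[e^{i_\phi}\alpha]=\eta_{(p)}+[\mathbb H(i_\phi\alpha)]+\tfrac12[\mathbb H(i_\phi^2\alpha)]$. Here we use $\mathbb H\alpha=\tilde\eta$, which holds because $Ti_\phi\alpha$ is $\bar\partial^*$-exact, and each summand has the types appearing in $\Omega_{(p)}(t)$. The classes $\Omega_{(0)}(t),\dots,\Omega_{(p)}(t)$ are linearly independent, being unipotent perturbations of the basis $\eta$, and their number equals $\dim F^{2-p}$ by upper semicontinuity together with Kähler openness near $t_0$. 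Hence they span \eqref{Hodge filt 2}.

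\textbf{Step 2: well-definedness.} The filtration \eqref{Hodge filt 2} depends only on the complex structure $X_t$, not on the choice of $\phi(t)$. Since $N_-\cap P=\{I\}$, the normalized block-unipotent representative is unique. Two choices of $\phi(t)$ therefore give the same matrix.

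\textbf{Step 3: holomorphicity and transversality.} Holomorphicity follows because $\phi(t)$ is holomorphic in $t$ (Lemma \ref{bel on B}) and all operations involved ($T$, $\mathbb H$, inversion via the Neumann series) are linear or analytic.

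For transversality, differentiate $\Omega_{(0)}(t)$ in the direction $\partial/\partial t_\mu$. Since $F^2$ is spanned by a section of $F^2$, its derivative lies in $F^1=\mathrm{span}\{\Omega_{(0)}(t),\Omega_{(1)}(t)\}$, which is Griffiths transversality and can be proved directly from the closed forms of Step 1. Writing
$$\partial_\mu\Omega_{(0)}=(\Phi^{0,1})^\bullet_\mu\eta_{(1)}+(\Phi^{0,2})^\bullet_\mu\eta_{(2)}$$
and comparing the $\eta_{(2)}$ coefficient with $a\,\Omega_{(0)}+b\,\Omega_{(1)}$ forces $a=0$ and $b=(\Phi^{0,1})^\bullet_\mu$. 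This yields \eqref{trans 2}. On singular $B$ the relation is only asserted on Zariski tangent vectors, where the same differentiation is valid by restriction.

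\textbf{Step 4: estimates.} The order estimates follow from $\phi(t_0)=0$, which gives $\Phi^{0,1},\Phi^{1,2}=O(|t-t_0|)$. Then \eqref{trans 2} integrates to $\Phi^{0,2}=O(|t-t_0|^2)$. Alternatively, the $\eta_{(2)}$-component at linear order is $\mathbb H(i_\phi\tilde\eta_{(0)})$ of type $(1,1)$, so it contributes nothing to $\eta_{(2)}$, and only the quadratic terms survive.

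The main obstacle is Step 1: verifying that $e^{i_\phi}(I+Ti_\phi)^{-1}\tilde\eta$ is genuinely $d$-closed via the $\partial\bar\partial$-lemma, and controlling convergence on the large region $B_{t_0,1}$ rather than only near $t_0$.
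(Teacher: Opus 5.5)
The gap is at the end of your Step 1, in the reverse inclusion $F^{2-p}H^{2}(X_{t},\C)\subseteq \mathrm{span}\{\Omega_{(0)}(t),\dots,\Omega_{(p)}(t)\}$.

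\textbf{The spanning claim only holds near $t_0$.} Wherever $I+Ti_{\phi(t)}$ is invertible, your construction gives $\sum_{q\le p}h^{2-q,q}(X_{t_0})$ independent classes inside $F^{2-p}H^2(X_t,\C)$. You then get equality from ``upper semicontinuity together with K\"ahler openness near $t_0$''. Both facts only control $X_t$ on some unspecified small neighbourhood of $t_0$.

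The proposition is asserted on all of $B_{t_0,1}$, which is defined by $\|\phi(t)\|<1$ and can reach far from $t_0$. There $X_t$ is not known to be K\"ahler; that is exactly what Theorem \ref{global Kahler} later deduces from this proposition, so using it would be circular. Semicontinuity also points the wrong way. The only bound it gives, $\dim F^{2-p}H^2(X_t,\C)\le\sum_{q\le p}h^{2-q,q}(X_t)$, involves the Hodge numbers of $X_t$, and these can jump up at special points away from $t_0$.

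Your Step 2 (independence of the choice of $\phi(t)$) and your Step 3 both rest on this equality. In Step 3 you need $F^1H^2(X_t,\C)=\mathrm{span}\{\Omega_{(0)}(t),\Omega_{(1)}(t)\}$ to turn $[\partial_\mu(e^{i_\phi}\alpha)]\in F^1H^2(X_t,\C)$ into \eqref{trans 2}. So as written you prove the proposition only on a germ at $t_0$, which is useless for the uniform region $B_{t_0,c_0}$ used later.

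What is missing is an intrinsic bound $\dim F^{2-p}H^2(X_t,\C)\le\sum_{q\le p}h^{2-q,q}(X_{t_0})$, valid whenever $I+Ti_{\phi(t)}$ is invertible. This is a uniqueness statement for closed forms.
\begin{itemize}
\item For $F^2$ your own machinery gives it. Every $(2,0)$-form on $X_t$ is $e^{i_\phi}\alpha$ with $\alpha\in A^{2,0}(X_{t_0})$. Then $d(e^{i_\phi}\alpha)=0$ forces $\partial\alpha=0$ and $\bar\partial\alpha=-\partial i_\phi\alpha$, hence $(I+Ti_\phi)\alpha=\mathbb H\alpha$. So such forms are determined by $\mathbb H\alpha\in\mathbb H^{2,0}(X_{t_0})$.
\item For $F^1$ you need the analogous statement for closed forms $e^{i_\phi}(\beta^{2,0}+\beta^{1,1})$ modulo $d$-exact forms in $F^1A^2(X_t)$. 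In complex dimension $2$ you can shortcut this using that $F^1H^2$ is orthogonal to $F^2H^2$ for the nondegenerate intersection form.
\end{itemize}
This is the part the paper imports from Section 3 of \cite{LS26-1}.

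\textbf{Two further points in Step 1.}
\begin{itemize}
\item The fixed-point equation $\alpha=\tilde\eta-Ti_\phi\alpha$ is not equivalent to $\bar\partial\alpha=-\partial i_\phi\alpha$; it only gives $\bar\partial\alpha=-\bar\partial\bar\partial^*G\partial i_\phi\alpha$. The K\"ahler identities on $X_{t_0}$ give $\partial\alpha=0$ and $\mathbb H(\partial i_\phi\alpha)=0$. The leftover term $\bar\partial^*G\bar\partial\partial i_\phi\alpha$ is not removed by the $\partial\bar\partial$-lemma. Set $\beta=\bar\partial\alpha+\partial i_\phi\alpha$. Using $\bar\partial\phi=\frac12[\phi,\phi]$, the Tian--Todorov commutator formula for $i_{[\phi,\phi]}$, and $\partial\alpha=0$, you get $(I+Ti_\phi)\beta=0$, hence $\beta=0$. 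This is a second, essential use of integrability that your sketch omits.
\item A Neumann series controlled in a H\"older norm converges only on a smaller, metric-dependent region. The domain $B_{t_0,1}$ in the statement comes from the $L^2$ bound $\|T\|\le 1$ used in Lemma \ref{constant lemma}.
\end{itemize}

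The remaining steps agree with the paper's sketch and are fine once the spanning holds on the whole domain: holomorphicity, the block-coefficient derivation of \eqref{trans 2}, and your direct estimate $\Phi^{0,2}=O(\|\phi(t)\|^2)$.
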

\begin{proof}[Sketch of proof]
The first part of the proposition follows from Section~3 of \cite{LS26-1}.

The holomorphicity of $\Phi$ follows from Lemma~\ref{bel on B} together with the construction of $\Phi$ in \eqref{global qpm}.  

Finally, the Griffiths transversality \eqref{trans 2} follows from Section~2 of \cite{LS26-1}, where the block structure of the matrices in $N_{-}$ plays a crucial role.
\end{proof}

\begin{proposition}\label{DPconj 3}
There exists a positive constant $c_{1}\le 1$, which depends only on the Hodge numbers $h^{2,0}(X_{t_{0}})=h^{0,2}(X_{t_{0}}), h^{1,1}(X_{t_{0}})$, such that the image of $B_{t_{0},c_{1}}$ under the quasi-period map in~\eqref{global qpm} lies in $D$; that is, the quasi-period map given in Proposition \ref{DPconj 2} restricts to a period map 
$$\Phi: B_{t_{0},c_{1}} \to N_{-}\cap D.$$
\end{proposition}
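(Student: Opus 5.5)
We have to show that for $t\in B_{t_0,c_1}$ the flag spanned by $\Omega_{(0)}(t),\Omega_{(1)}(t)$ satisfies the Hodge--Riemann bilinear relations, so that it lies in the open subset $D\subset\check D$. The approach is to reduce everything to the pointwise (linear-algebraic) behaviour of $\phi(t)$ on forms. For $\|\phi\|<1$ the complex structure $(X_{t_0})_{\phi}$ is well defined, and the forms $e^{i_{\phi}}\alpha$ span the bundle of $(p,q)$-forms of $X_t$. Throughout we use the description of $F^{2-p}H^2(X_t)$ from Proposition \ref{DPconj 2}.

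\textbf{Step 1 (first relation).} The first Hodge--Riemann relation $Q(F^2,F^1)=0$ is automatic, because $\Phi(t)$ lies in $\check D$. This follows from Proposition \ref{DPconj 2}: the filtration is the genuine Hodge filtration of the complex manifold $X_t$, and on a compact manifold the integral of a $(4,0)$- or $(3,1)$-type product vanishes by type.

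\textbf{Step 2 (positivity conditions).} What remains are the open conditions
\begin{equation*}
i^{2}Q(\Omega,\bar\Omega)>0 \text{ on } F^2,
\end{equation*}
together with nondegeneracy and the correct signature of $Q$ on $F^1\cap\overline{F^1}$ (equivalently, $H^{1,1}_t$ must have the right signature). I would argue through the representatives. The class $\Omega_{(0)}(t)$ is represented by $e^{i_{\phi(t)}}(I+Ti_{\phi(t)})^{-1}\tilde\eta_{(0)}$, which is a $d$-closed $(2,0)$-form on $X_t$. Pointwise, the Hermitian form $\int \alpha\wedge\bar\beta\wedge\omega_{t_0}^{n-2}$ restricted to $e^{i_\phi}$ of $(2,0)$-forms is a small perturbation of the positive-definite form on $\Lambda^{2,0}$. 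The perturbation is controlled by the singular values of $\phi$, i.e.\ by $\|\phi\|^E$, since $e^{i_\phi}$ acts through $\Lambda^{\bullet}$ of the matrix $(\phi^i_{\bar j})$.

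There is a choice of polarization to make here. One natural choice is to use $Q$ twisted by a fixed class, namely the K\"ahler class $[\omega_{t_0}]$ for primitive cohomology. A cleaner alternative is to work in the Grassmannian picture: if $\sigma_{\max}(\phi)<c_1$, then the $(2,0)$ and $(0,2)$ parts stay definite by a determinant and eigenvalue estimate. This estimate involves only the dimensions $h^{2,0}$ and $h^{1,1}$ through the sizes of the blocks of the matrix, which explains the stated dependence of $c_1$.

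\textbf{Step 3 (connectedness).} Finally, I would use that $B_{t_0,c_1}$ is connected and contains $t_0$, where $\Phi(t_0)=I\in D$. Since $D$ is open in $\check D$, if the signature conditions could fail only by passing through a degenerate form, then continuity of $\Phi$ keeps $\Phi(t)$ in $D$. So it suffices to show that degeneracy (a zero eigenvalue of the relevant Hermitian forms on the blocks) cannot occur while $\|\phi(t)\|<c_1$. Concretely, this means bounding the off-diagonal blocks $\Phi^{0,1},\Phi^{1,2},\Phi^{0,2}$ of \eqref{global qpm} in operator norm by an explicit function of $\|\phi\|$, using the harmonic projection $\mathbb H$ (norm $\le1$ in $L^2$) and $\|(I+Ti_\phi)^{-1}\|$.

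\textbf{Main obstacle.} The hard part is this last bound. The operator $T=\bar\partial^*G\partial$ has a norm depending on the metric, not merely on the Hodge numbers. So I would first try to avoid $T$ entirely: non-degeneracy of the Hodge--Riemann forms is a statement about the actual Hodge decomposition of $X_t$, and it can be tested on the $d$-closed representatives $e^{i_\phi}(\cdots)$ pointwise. Doing so, $c_1$ comes only from the linear algebra of $e^{i_\phi}$ on $\Lambda^2$, with threshold $\sigma_{\max}(\phi)<1$ suitably shrunk by a dimension-dependent factor.
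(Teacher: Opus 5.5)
There is a genuine gap, and it sits exactly at the point you call the main obstacle. Your fallback is to test non-degeneracy ``pointwise on the $d$-closed representatives $e^{i_\phi}(\cdots)$''. This cannot certify $\Phi(t)\in D$, because membership in $D$ is a cohomological condition. In the paper's proof it is $F^1H^2(X_t,\mathbb C)\oplus\overline{F^2H^2(X_t,\mathbb C)}=H^2(X_t,\mathbb C)$, i.e.\ no nontrivial combination $a\,\Omega_{(0)}(t)+b\,\Omega_{(1)}(t)+c\,\overline{\Omega_{(0)}(t)}$ is exact. Pointwise information on representatives does not exclude exactness. For example, on the Iwasawa manifold (coframe $\varphi_1=dz_1$, $\varphi_2=dz_2$, $\varphi_3=dz_3-z_1dz_2$) the nowhere vanishing holomorphic $2$-form $\varphi_1\wedge\varphi_2$ equals $-d\varphi_3$.

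Your Hodge--Riemann variant has the same defect. The sign of $Q$ on $F^1\cap\overline{F^1}$ is not a pointwise property, since the relevant $(1,1)$-representatives are pointwise indefinite. On a K\"ahler manifold that sign comes from harmonic theory, which you do not have on $X_t$. Stokes' theorem does turn pointwise positivity of $\Omega\wedge\bar\Omega$ into $Q(\Omega,\bar\Omega)>0$ when $\dim_{\mathbb C}X_{t_0}=2$ and $Q$ is the intersection form. In higher dimension, however, even your Step~1 fails for $Q$ twisted by $[\omega_{t_0}]^{d-2}$: $\omega_{t_0}$ acquires $(2,0)$ and $(0,2)$ components with respect to $X_t$, so the type argument no longer applies.

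The missing idea is that you need neither a polarization nor a way around $T$. The paper reduces $\Phi(t)\in D$ to linear independence of $\Omega_{(0)}(t),\Omega_{(1)}(t),\overline{\Omega_{(0)}(t)}$. That is the nonvanishing of the determinant of the block matrix with rows $(I,\Phi^{0,1},\Phi^{0,2})$, $(O,I,\Phi^{1,2})$, $(\overline{\Phi^{0,2}},\overline{\Phi^{0,1}},I)$. It holds once all blocks are below a threshold depending only on the block sizes, hence only on $h^{2,0},h^{1,1}$.

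Lemma~\ref{constant lemma} then gives $|\Phi^{0,1}|,|\Phi^{1,2}|\le\|\phi\|/(1-\|\phi\|)$ and $|\Phi^{0,2}|\le\|\phi\|^2/(1-\|\phi\|)$. Its proof uses the Neumann series for $(I+Ti_\phi)^{-1}$, orthonormal harmonic bases, and the metric-independent bound $\|T\|\le 1$ on the K\"ahler manifold $X_{t_0}$ (Proposition~3.5 of \cite{LS26-1}).

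That bound is elementary:
\begin{itemize}
\item Put $\beta=G\partial\alpha$. Since harmonic forms are $\partial^*$-closed, $\mathbb H\partial\alpha=0$, so $\|T\alpha\|^2=\|\bar\partial^*\beta\|^2\le(\Delta_{\bar\partial}\beta,\beta)=(\partial\alpha,G\partial\alpha)$.
\item Expand $\alpha$ in eigenforms $\alpha_\lambda$ of $\Delta_{\bar\partial}=\Delta_\partial$; these eigenspaces are preserved by $\partial$.
\item Then $(\partial\alpha,G\partial\alpha)=\sum_{\lambda>0}\lambda^{-1}\|\partial\alpha_\lambda\|^2\le\|\alpha\|^2$, because $\|\partial\alpha_\lambda\|^2\le(\Delta_\partial\alpha_\lambda,\alpha_\lambda)=\lambda\|\alpha_\lambda\|^2$.
\end{itemize}
So the block-bound route you outlined in Step~3 and then abandoned is precisely the paper's proof, and it goes through.
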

\begin{proof} 
The Hodge filtration 
$$F^{2}H^{2}(X_{t},\C)\subset F^{1}H^{2}(X_{t},\C)\subset F^{0}H^{2}(X_{t},\C)=H^{2}(X_{t},\C)$$
gives a pure Hodge structure on $H^{2}(X_{t},\C)$ if and only if
$$F^{1}H^{2}(X_{t},\C)\oplus \bar{F^{2}H^{2}(X_{t},\C)}=H^{2}(X_{t},\C).$$
Hence the image $\Phi(t)$ of the quasi-period map given in Proposition \ref{DPconj 2} lies in $N_{-}\cap D$ if and only if
$$\Omega_{(0)}(t), \Omega_{(1)}(t), \bar{\Omega_{(0)}(t)}$$
are linearly independent, which is equivalent to that
\begin{equation}\label{pure Hodge matrix} 
\mathrm{det}\left(
\begin{array}{ccc}
I & \Phi^{(0,1)}(t) & \Phi^{(0,2)}(t) \\
O& I& \Phi^{(1,2)}(t)\\
\bar{\Phi^{(0,2)}(t)}&\bar{\Phi^{(0,1)}(t)}&I
\end{array}\right)\neq 0.
\end{equation}
Since the determinant \eqref{pure Hodge matrix} depends continuously on $t$, there exists a constant $c'_{1}>0$, which depends only on the sizes of the blocks of the matrix, such that the determinant \eqref{pure Hodge matrix} is nonzero whenever $$|\Phi^{(0,1)}(t)|,|\Phi^{(0,2)}(t)|,|\Phi^{(1,2)}(t)|<c'_{1}.$$

Note that the sizes of the blocks are determined by the Hodge numbers $h^{2,0}(X_{t_{0}})=h^{0,2}(X_{t_{0}}), h^{1,1}(X_{t_{0}})$. Then from Lemma \ref{constant lemma} below, we conclude that there exists a constant $c_{1}>0$, which depends only on the Hodge numbers, such that the determinant \eqref{pure Hodge matrix} is nonzero for all $t\in B_{t_{0},c_{1}}$. Hence $\Phi(t)\in N_{-}\cap D$ for all $t\in B_{t_{0},c_{1}}$.
\end{proof}

\begin{lemma}\label{constant lemma}
Let the notations be as in Proposition \ref{DPconj 2} and \ref{DPconj 3}. Then we have that 
\begin{eqnarray*}
|\Phi^{(0,1)}(t)|&\le & \frac{\|\phi(t)\|}{1-\|\phi(t)\|}\\
|\Phi^{(0,2)}(t)|&\le & \frac{\|\phi(t)\|^{2}}{1-\|\phi(t)\|}\\
|\Phi^{(1,2)}(t)|&\le & \frac{\|\phi(t)\|}{1-\|\phi(t)\|},
\end{eqnarray*}
for any $\phi(t)\in A^{0,1}\left(X_{t_{0}},\mathrm T^{1,0}X_{t_{0}}\right)$ with $\|\phi(t)\|<1$ such that $X_{t}=(X_{t_{0}})_{\phi(t)}$.
\end{lemma}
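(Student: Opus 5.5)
The plan is to read off the blocks of $\Phi(t)$ directly from the defining formulas for $\Omega_{(0)}(t)$ and $\Omega_{(1)}(t)$. The operator $(I+Ti_{\phi(t)})^{-1}$ will be expanded as a Neumann series, and each term will be bounded using the supremum operator norm $\|\phi(t)\|$.

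The first step is to fix the meaning of $|\cdot|$ on the blocks: it is the operator norm of the corresponding linear map between the spaces of harmonic forms $\mathbb H^{2-i,i}(X_{t_0})$, equipped with the $L^2$ inner product induced by the K\"ahler metric on $X_{t_0}$. We choose the adapted bases $\tilde\eta_{(i)}$ orthonormal. Then $|\Phi^{0,1}(t)|$, for example, is the norm of the map
$$\tilde\eta_{(0)}\mapsto \mathbb H\bigl(i_{\phi(t)}(I+Ti_{\phi(t)})^{-1}\tilde\eta_{(0)}\bigr)$$
restricted to $\mathbb H^{2,0}$ and projected to $\mathbb H^{1,1}$. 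Since $\mathbb H$ is an orthogonal projection, it has norm $\le 1$ and can be dropped.

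The second step is to establish the two pointwise estimates
$$|i_{\phi}\alpha|\le \|\phi\|\,|\alpha| \quad\text{and}\quad |i_{\phi}^2\alpha|\le \|\phi\|^2|\alpha|.$$
For the contraction of a $(p,q)$-form by $\phi$, this should follow by diagonalizing the matrix $(\phi^i_{\bar j}(z))$ at a point via singular value decomposition in unitary frames. The operator norm of $i_\phi$ on forms is then controlled by $\sigma_{\max}$, which is exactly $\|\phi\|^E$. With $T=\bar\partial^*G\partial$, we also need $\|T\|\le 1$ in the $L^2$ sense on the relevant forms, so that $\|Ti_\phi\|\le\|\phi\|<1$ and the Neumann series converges with $\|(I+Ti_\phi)^{-1}\|\le (1-\|\phi\|)^{-1}$. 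The bound $\|\bar\partial^*G\partial\|\le 1$ should come from the K\"ahler identities on $X_{t_0}$: $\triangle_{\bar\partial}=\triangle_\partial$, and $\bar\partial^*G\partial$ is a composition of partial isometries onto orthogonal summands. Concretely, I would check that $\|\bar\partial^*G\partial u\|^2\le\|u\|^2$ using $\langle\bar\partial\bar\partial^*G\partial u,G\partial u\rangle\le\langle\triangle G\partial u,G\partial u\rangle$ and the fact that $\partial$ and $\bar\partial^*$ satisfy comparable bounds.

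Combining these estimates gives
$$|\Phi^{0,1}(t)|\le \|\phi\|/(1-\|\phi\|),$$
and the same argument applied to $\tilde\eta_{(1)}$ gives the bound for $\Phi^{1,2}$. For $\Phi^{0,2}$, the term $\frac12 i_\phi^2(I+Ti_\phi)^{-1}$ yields $\tfrac12\|\phi\|^2/(1-\|\phi\|)$. The projection onto $\mathbb H^{0,2}$ of the first term $i_\phi(I+Ti_\phi)^{-1}\tilde\eta_{(0)}$ also has to be controlled. Here I would note that $i_\phi$ lowers the holomorphic degree by one, so the first-order part $i_\phi\tilde\eta_{(0)}$ is of type $(1,1)$. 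The $(0,2)$ component therefore only arises from the terms of the series with $Ti_\phi$ applied at least once, which is already quadratic in $\phi$. This gives a further bound of $\|\phi\|^2/(1-\|\phi\|)$ up to the factor $\tfrac12$ in the sum, and the two contributions together should fit within $\|\phi\|^2/(1-\|\phi\|)$.

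The main obstacle is the norm bound $\|T\|\le 1$ together with careful bidegree bookkeeping in the $(0,2)$ block. If a sharp constant $1$ is not attainable, I would fall back on a constant depending only on the Hodge numbers. That weaker bound still suffices for Proposition~\ref{DPconj 3}.
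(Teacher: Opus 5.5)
Your route is the paper's: orthonormal adapted bases, drop $\mathbb H$, expand $(I+Ti_{\phi})^{-1}$ as a Neumann series, and sum a geometric series using $\|T\|\le 1$. The paper cites Proposition~3.5 of \cite{LS26-1} for $\|T\|\le 1$. Your K\"ahler-identity sketch does prove it: with $\triangle=\triangle_{\partial}=\triangle_{\bar\partial}$ and $\mathbb H\partial u=0$ one has $\|\bar\partial^{*}G\partial u\|^{2}\le\langle\triangle G\partial u,G\partial u\rangle=\langle G\partial^{*}\partial u,u\rangle\le\langle G\triangle u,u\rangle\le\|u\|^{2}$.

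\textbf{The $(0,2)$ block.} This is where the proposal goes wrong. $T=\bar\partial^{*}G\partial$ has bidegree $(1,-1)$ and $i_{\phi}$ has bidegree $(-1,1)$, so $Ti_{\phi}$ preserves type. Hence $(I+Ti_{\phi})^{-1}\tilde\eta_{(0)}$ is of type $(2,0)$, and $i_{\phi}(I+Ti_{\phi})^{-1}\tilde\eta_{(0)}$ is purely of type $(1,1)$. The first bracket in $\Omega_{(0)}(t)$ therefore yields exactly $\Phi^{0,1}$ and has no $(0,2)$ part, contrary to your claim that the terms with $Ti_{\phi}$ feed into $\Phi^{0,2}$. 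Your bookkeeping would not close anyway: that bracket carries no factor $\tfrac12$, so the two contributions you describe add up to $\tfrac32\|\phi\|^{2}/(1-\|\phi\|)$.

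The only source of $\Phi^{0,2}$ is $\tfrac12[\mathbb H(i_{\phi}^{2}(I+Ti_{\phi})^{-1}\tilde\eta_{(0)})]$. There your estimate $|i_{\phi}^{2}\alpha|\le\|\phi\|^{2}|\alpha|$ is off by a factor $2$. For $(1,0)$-forms $\alpha,\beta$ one has $i_{\phi}^{2}(\alpha\wedge\beta)=2\,i_{\phi}\alpha\wedge i_{\phi}\beta$. So $\tfrac12 i_{\phi}^{2}$ is the second exterior power of the pointwise map $\alpha\mapsto i_{\phi}\alpha$, with norm at most $\sigma_{\mathrm{max}}^{2}$, not $\tfrac12\sigma_{\mathrm{max}}^{2}$. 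With both corrections you get $\|\phi\|^{2}\,\|(I+Ti_{\phi})^{-1}\|$. That is exactly the stated shape, with no room for any further contribution.

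\textbf{The first-order estimate.} The bound $|i_{\phi}\alpha|\le\|\phi\|\,|\alpha|$ also fails on $2$-forms, because $i_{\phi}$ acts as a derivation. On a flat complex $2$-torus, in flat coordinates, take the integrable $\phi=s(\partial_{1}\otimes d\bar z_{1}+\partial_{2}\otimes d\bar z_{2})$, so $\|\phi\|=|s|$. Then $i_{\phi}(dz_{1}\wedge dz_{2})=s(dz_{1}\wedge d\bar z_{2}-dz_{2}\wedge d\bar z_{1})$ has pointwise norm $\sqrt2\,|s|\,|dz_{1}\wedge dz_{2}|$; the same $\sqrt2$ appears for $(1,1)\to(0,2)$. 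This form has constant coefficients, so $Ti_{\phi}\tilde\eta_{(0)}=0$ and it is harmonic. Thus $\Phi^{0,1}(t)$ is its coefficient vector, of length $\sqrt2\,|s|>|s|/(1-|s|)$ for $|s|<1-1/\sqrt2$.

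So with your reading of $|\cdot|$ the first inequality is false as stated, and no singular-value argument can produce the constant $1$. The paper's proof passes over the same point: it bounds $\|i_{\phi}\tilde\eta_{(0)}\|$ by $\|\phi\|$ without comment. This is a shared defect rather than a divergence from the paper.

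Separately, $\|\cdot\|^{E}$ is computed from the coordinate matrices $(\phi^{i}_{\bar j})$ of the fixed cover, not in $\omega_{0}$-unitary frames. Your ``exactly $\|\phi\|^{E}$'' therefore holds only up to a factor depending on $\omega_{0}$ and the cover, and your fallback constant would depend on more than the Hodge numbers. If $\|\phi\|$ is measured in unitary frames, your argument honestly yields
\[
|\Phi^{0,1}|,\ |\Phi^{1,2}|\le\frac{\sqrt2\|\phi\|}{1-\sqrt2\|\phi\|},\qquad |\Phi^{0,2}|\le\frac{\|\phi\|^{2}}{1-\sqrt2\|\phi\|}.
\]
That is all Proposition~\ref{DPconj 3} and the choice of $c_{2}$ need.
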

\begin{proof} 
We only prove the lemma for $\Phi^{0,1}(t)$ and the proof for other blocks is similar.

From the proof of Proposition \ref{DPconj 2}, we have that 
\begin{eqnarray*}
|\Phi^{(0,1)}(t)|&= &|\left(\mathbb H\left(i_{\phi(t)} (I+Ti_{\phi(t)})^{-1}\tilde \eta_{(0)} \right), \tilde\eta_{(0)}\right)|\\
&\le &\|\mathbb H\left(i_{\phi(t)} (I+Ti_{\phi(t)})^{-1}\tilde \eta_{(0)} \right) \|\\
&\le &\|i_{\phi(t)}\tilde  \eta_{(0)}-i_{\phi(t)}Ti_{\phi(t)}\tilde \eta_{(0)}+\cdots+i_{\phi(t)}(-Ti_{\phi(t)})^{k-1}\tilde \eta_{(0)}+\cdots\|\\
&\le & \|\phi(t)\|+\|T\|\|\phi(t)\|^{2}+\cdots+\|T\|^{k-1}\|\phi(t)\|^{k}+\cdots\\
&= &\frac{\|\phi(t)\|}{1-\|T\|\|\phi(t)\|}\le  \frac{\|\phi(t)\|}{1-\|\phi(t)\|}.
\end{eqnarray*}
Here we take the adapted basis $\tilde\eta_{(i)}$, $i=0,1,2$, to be the orthonormal basis and the operator norm $\|T\|\le 1$ follows from Proposition 3.5 in \cite{LS26-1}.
\end{proof}

\begin{proof}[Proof of Theorem \ref{global Kahler}]
Let $[\omega_{0}]$ be a K\"ahler class on $X_{t_{0}}$. 
We want to construct a $(1,1)$-class on $X_{t}$ for $t\in B_{t_{0},c_{0}}$ admitting an everywhere positive definite representative.

Recall that $\eta_{(0)},\eta_{(1)},\eta_{(2)}$ form the adapted basis of the Hodge decomposition
$$H^{2}(X_{t_{0}},\C)=H^{2,0}(X_{t_{0}})\oplus H^{1,1}(X_{t_{0}})\oplus H^{0,2}(X_{t_{0}}).$$

By Proposition~2.2 in \cite{LS26-2}, whose proof is based on an elementary argument from Hodge theory, we have that a real $2$-class $\sigma=\alpha_{(0)}\cdot \eta_{(0)}+[\omega_{0}]+\bar{\alpha_{(0)}}\cdot \eta_{(2)}$ is a $(1,1)$-class on $X_{t}$ for $t\in B_{t_{0},1}$ if and only if 
\begin{equation}\label{B=01 11-0'}
F=\bar{\alpha_{(0)}}- \alpha_{(1)}^{0}\Phi^{(1,2)}(t)-\alpha_{(0)}\left(\Phi^{(0,2)}(t)-\Phi^{(0,1)}(t)\Phi^{(1,2)}(t)\right)=0.
\end{equation}
Here $\alpha_{(1)}^{0}$ is a row vector such that $[\omega_{0}]=\alpha_{(1)}^{0}\cdot \eta_{(1)}$. Here the blocks $\Phi^{0,1}(t)$, $\Phi^{0,2}(t)$, and $\Phi^{1,2}(t)$ are given by \eqref{global qpm}.

By the Implicit Function Theorem, we have that Equation \eqref{B=01 11-0'} determines a local real analytic function $\alpha_{(0)}(t)$ in $t$ provided that the full Jacobian matrix 
\begin{equation}\label{fJ weight2}
J(t)=\left(
\begin{array}{cc}
\frac{\partial F}{\partial \bar{\alpha_{(0)}}} & \frac{\partial F}{\partial {\alpha_{(0)}}} \\
\frac{\partial \bar F}{\partial \bar{\alpha_{(0)}}}& \frac{\partial \bar F}{\partial {\alpha_{(0)}}}
\end{array}\right)=\left(
\begin{array}{cc}
I & -A(t) \\
-\bar{A(t)}& I
\end{array}\right)
\end{equation}
is non-degenerate at $t$,
where 
$$A(t)=\Phi^{(0,2)}(t)-\Phi^{(0,1)}(t)\Phi^{(1,2)}(t).$$
Since $A(t_{0})=O$, there exists a positive constant $c_{2}\le 1$ such that the Jacobian matrix $J(t)$ is non-degenerate for all $t\in B_{t_{0},c_{2}}$. In fact, from Lemma \ref{constant lemma}
we can choose the constant $c_{2}$, which depends only on the Hodge numbers $h^{2,0}(X_{t_{0}})=h^{0,2}(X_{t_{0}}), h^{1,1}(X_{t_{0}})$, such that the norm $\|A(t)\|<1$ whenever $\|\phi(t)\|<c_{2}$.

Let $c_{0}=\min\{c_{1},c_{2},1\}>0$. We will prove that for any $t\in B_{t_{0},c_{0}}$, there exists a row vector $\alpha_{(0)}(t)$ satisfying \eqref{B=01 11-0'}.

In fact we can define $B'_{t_{0},c_{0}}$ as the subset of $B_{t_{0},c_{0}}$ consisting of such points.
Since $B_{t_{0},c_{0}}$ is connected, we prove the above statement by a standard open--and--closed argument in topology.

\noindent Openness of $B'_{t_{0},c_{0}}$. 

Let $t_{1}\in B'_{t_{0},c_{0}}$, and let $\alpha_{(0)}(t_{1})$ be such that $\sigma(t_{1})$ satisfies \eqref{B=01 11-0'}. Since $J(t)$ is non-degenerate for all $t\in B_{t_{0},c_{0}}$, we have a local real analytic function $\alpha_{(0)}(t)$ around $t_{1}$ satisfying \eqref{B=01 11-0'} with initial value $\alpha_{(0)}(t_{1})$. This proves the openness of $B'_{t_{0},c_{0}}$.

\noindent Closedness of $B'_{t_{0},c_{0}}$. 

Let $\{t_{i}\}_{i=1}^{\infty}$ be a sequence of points in $B'_{t_{0},c_{0}}$ with limit $t_{\infty}\in B_{t_{0},c_{0}}$. We need to show that $t_{\infty}\in B'_{t_{0},c_{0}}$. By the definition of $B'_{t_{0},c_{0}}$, there exist row vectors $\alpha_{(0)}(t_{i})$ such that 
$$\bar{\alpha_{(0)}}(t_{i})- \alpha_{(1)}^{0}\Phi^{(1,2)}(t_{i})-\alpha_{(0)}(t_{i})A(t_{i})=0.$$

We claim that $\{\alpha_{(0)}(t_{i})\}_{i}$ is bounded. Otherwise for any $N>0$,
\begin{eqnarray*}
\|\alpha_{(1)}^{0}\Phi^{(1,2)}(t_{i})\|&\ge&  \|\bar{\alpha_{(0)}}(t_{i})\|-\|A(t_{i})\|\|\alpha_{(0)}(t_{i})\|\\
&\ge & (1-\|A(t_{\infty})\|-\epsilon_{0}) \|\alpha_{(0)}(t_{i})\|>N
\end{eqnarray*}
for $i$ large enough, where $\epsilon_{0}>0$ can be chosen as $1/2(1-\|A(t_{\infty})\|)$. This contradicts the continuity of $\alpha_{(1)}^{0}\Phi^{(1,2)}(t)$ in a neighborhood of $t_{\infty}$.

Hence $\{\alpha_{(0)}(t_{i})\}_{i}$ is bounded and we can choose a subsequence, which is still denoted by $\{t_{i}\}_{i=1}^{\infty}$, such that $\{\alpha_{(0)}(t_{i})\}_{i}$ has a limit $\alpha_{(0)}({\infty})$. By continuity, the row vector $\alpha_{(0)}({\infty})$ satisfies that
$$\bar{\alpha_{(0)}}({\infty})- \alpha_{(1)}^{0}\Phi^{(1,2)}(t_{\infty})-\alpha_{(0)}({\infty})A(t_{\infty})=0.$$
Therefore $t_{\infty} \in B'_{t_{0},c_{0}}$. This proves the closedness of $B'_{t_{0},c_{0}}$.

Now we have proved that for any $t\in B_{t_{0},c_{0}}$, there exists a real $2$-class 
\begin{equation}\label{class sigma}
\sigma(t)=\alpha_{(0)}(t)\cdot \eta_{(0)}+[\omega_{0}]+\bar{\alpha_{(0)}}(t)\cdot \eta_{(2)},
\end{equation}
 which is a $(1,1)$-class on $X_{t}$. 
 
From Lemma \ref{pos of harmonic sigma} below, we have that $\sigma(t)$ has a representative which is positive definite. This proves that $(X_{t},\tilde\sigma(t))$ is K\"ahler for $t\in B_{t_{0},c_{0}}$.
\end{proof}

\begin{lemma}\label{pos of harmonic sigma}
Let the notations be as in the proof of Theorem \ref{global Kahler}, and let $\tilde\eta_{(i)}$ be the harmonic representatives of the basis $\eta_{(i)}$ for $0\le i\le 2$. Then the $(1,1)$-class $\sigma(t)$ in \eqref{class sigma} on $X_{t}$ has a smooth representative 
\begin{equation}\label{harmonic sigma}
\tilde\sigma(t)=\alpha_{(0)}(t)\cdot \tilde\eta_{(0)}+\omega_{0}+\bar{\alpha_{(0)}}(t)\cdot \tilde\eta_{(2)},
\end{equation}
which is positive definite. 
\end{lemma}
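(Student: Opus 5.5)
The plan is to check positivity pointwise, using the explicit description \eqref{Tan Xt} of the holomorphic tangent bundle of $X_{t}=(X_{t_{0}})_{\phi(t)}$. The form $\tilde\sigma(t)$ in \eqref{harmonic sigma} is a real closed $2$-form on the underlying manifold of $X_{t_{0}}$, since $\tilde\eta_{(2)}=\bar{\tilde\eta_{(0)}}$ and $\omega_{0}$ is real. It represents $\sigma(t)$ by construction.

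\textbf{Step 1: reduce to the $(1,1)$-part on $X_t$.} By \eqref{Tan Xt}, $\Tan X_{t}$ is spanned by the vectors $v-\bar{\phi(t)}(v)$ with $v\in \Tan X_{t_{0}}$. Positivity of the $(1,1)_{t}$-component of $\tilde\sigma(t)$ therefore amounts to
\[
-\i\,\tilde\sigma(t)\bigl(v-\bar{\phi}(v),\,\bar v-\phi(\bar v)\bigr)>0 \qquad\text{for all } v\neq 0 .
\]

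\textbf{Step 2: the K\"ahler term.} Since $\omega_{0}$ is of type $(1,1)$ on $X_{t_{0}}$, its mixed terms vanish. Hence
\[
-\i\,\omega_{0}\bigl(v-\bar\phi v,\bar v-\phi\bar v\bigr)\ \ge\ (1-\|\phi(t)\|^{2})\,|v|^{2}_{\omega_{0}},
\]
where $\|\cdot\|$ is the supremum operator norm \eqref{son E} measured with respect to $\omega_{0}$.

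\textbf{Step 3: the $(2,0)$ and $(0,2)$ terms.} Since $\tilde\eta_{(0)}$ is of type $(2,0)$ on $X_{t_{0}}$, only the cross terms survive, giving
\[
\bigl|\alpha_{(0)}(t)\cdot\tilde\eta_{(0)}\bigl(v,-\phi\bar v\bigr)\bigr| \le |\alpha_{(0)}(t)|\,\sup|\tilde\eta_{(0)}|_{\omega_{0}}\,\|\phi(t)\|\,|v|^{2}.
\]
The conjugate term is bounded in the same way. To bound $\alpha_{(0)}(t)$, rewrite \eqref{B=01 11-0'} as $\bar{\alpha_{(0)}}=\alpha^{0}_{(1)}\Phi^{(1,2)}+\alpha_{(0)}A$. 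Together with $\|A(t)\|<1$ on $B_{t_{0},c_{0}}$ and Lemma \ref{constant lemma}, this gives
\[
|\alpha_{(0)}(t)|\le \frac{|\alpha_{(1)}^{0}|\,\|\phi(t)\|}{(1-\|A(t)\|)(1-\|\phi(t)\|)} .
\]
The total error is therefore $O(\|\phi(t)\|^{2})\,|v|^{2}$. This is dominated by the main term $(1-\|\phi\|^{2})|v|^{2}$ once $\|\phi(t)\|<c_{0}$ with $c_{0}$ small, shrinking $c_{0}$ further if necessary.

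\textbf{Step 4: control of the constants.} This step has to show that the ratio $|\alpha^{0}_{(1)}|\sup|\tilde\eta_{(0)}|$ is controlled. Both quantities scale the same way under rescaling $\omega_{0}$. The harmonic forms are orthonormal in $L^{2}$, and Proposition \ref{kalpha<W0} converts $L^{2}$ bounds on harmonic forms into $C^{0}$ bounds. I expect this to give a bound depending on $(X_{t_{0}},\omega_{0})$. If the bound cannot be made to depend only on the Hodge numbers, I would accept a $c_{0}$ that also depends on $\omega_{0}$.

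\textbf{Step 5 (main obstacle): from a positive $(1,1)_t$-part to a K\"ahler form.} What Steps 1--4 actually give is that the $(1,1)_{t}$-part of $\tilde\sigma(t)$ is positive, i.e. $\tilde\sigma(t)$ tames $J_{t}$. The $(2,0)_{t}$-component of $\tilde\sigma(t)$ need not vanish pointwise, so this alone does not make $\tilde\sigma(t)$ a K\"ahler form. Since $\sigma(t)$ is a $(1,1)$-class on $X_{t}$, I would first try to show that the $(2,0)_{t}$-component is $\bp_{t}$-exact, say $\bp_{t}\gamma$ with $\gamma$ of type $(1,0)_{t}$. Then
\[
\tilde\sigma(t)-d(\gamma+\bar\gamma)=\tilde\sigma^{1,1}_{t}-\p_{t}\gamma-\bp_{t}\bar\gamma
\]
is a closed real $(1,1)_{t}$-form in the class $\sigma(t)$. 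It remains positive provided $\gamma$ is $C^{1}$-small. This smallness would come from the same quadratic smallness in $\|\phi(t)\|$ obtained in Step 3. The exactness and the $C^{1}$ estimate on $\gamma$ are the delicate points. For them I would invoke the $\bp$-Hodge theory of $X_{t_{0}}$ transported via $(I+Ti_{\phi})^{-1}$, as in the construction of $\Phi$.
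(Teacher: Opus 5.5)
\paragraph{Verdict.} Your proposal has a genuine gap at Step 5, which you yourself flag as the main obstacle. The paper's proof does not resolve that point either; it assumes it away.

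\paragraph{What your Steps 1--4 give.} They are essentially sound, but they prove something weaker than the lemma. They show that $\tilde\sigma(t)$ \emph{tames} $J_t$, i.e.\ its $(1,1)_t$-part is positive. They do so only after shrinking $c_0$ in an $\omega_0$-dependent way:
\begin{itemize}
\item already in Step 2, because $B_{t_0,c_0}$ is defined by the coordinate norm \eqref{son E}, not by the $\omega_0$-operator norm;
\item again in Steps 3--4, through the factor $|\alpha^0_{(1)}|\sup|\tilde\eta_{(0)}|$.
\end{itemize}
In the lemma, however, $c_0=\min\{c_1,c_2,1\}$ is already fixed by the Hodge numbers, and the claim is about $\tilde\sigma(t)$ itself. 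A closed taming form is not a K\"ahler form, so everything rests on Step 5, which is not carried out.

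\paragraph{Problems in Step 5.} First, there is a type slip. You want $(\tilde\sigma(t))^{2,0}_t=\partial_t\gamma$ with $\gamma\in A^{1,0}(X_t)$, and then $\tilde\sigma(t)-d(\gamma+\bar\gamma)=(\tilde\sigma(t))^{1,1}_t-\bar\partial_t\gamma-\partial_t\bar\gamma$.

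Second, the exactness you call delicate is actually free. The map $[\beta]\mapsto[\beta^{0,2}_t]\in H^{0,2}_{\bar\partial}(X_t)$ is well defined on de Rham classes and kills $F^1H^2(X_t,\C)$, which contains $\sigma(t)$.

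The real problem is positivity of the corrected form, i.e.\ a $C^0$ bound on $\bar\partial_t\gamma$.
\begin{itemize}
\item The $(0,2)_t$-part of $\tilde\sigma(t)$ contains $\bar{\alpha_{(0)}}\cdot\tilde\eta_{(2)}$ and the contraction of $\phi(t)$ into $\omega_0$. Both are of first order in $\phi(t)$, so the quadratic smallness from Step 3 is not what controls $\gamma$.
\item A $C^1$ bound on a primitive needs Green/Schauder estimates for $\bar\partial_t$ on $X_t$, hence higher norms of $\phi(t)$. None of these is bounded on $B_{t_0,c_0}$, which only controls $\|\phi(t)\|$.
\end{itemize}
Completed, your route would give a Kodaira--Spencer-type stability statement for a modified representative. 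Its constants would depend on $(X_{t_0},\omega_0)$ and on $C^{k,\alpha}$-bounds of $\phi(t)$; that is not the lemma.

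\paragraph{Comparison with the paper.} The paper argues quite differently. It writes
$\tilde\sigma(t)=\sum g_{i\bar j}(z,t)(dz_i+\phi(t)dz_i)\wedge(d\bar z_j+\bar{\phi(t)}d\bar z_j)$. It then compares $(1,1)$-parts on $X_{t_0}$ to get $(g_{i\bar j}(z,t))=(I-S(z,t))^{-1}(g_{i\bar j}(z))$. Positivity follows from the Neumann series whenever $\|\phi(t)\|<1$, with no error terms and no dependence on $\omega_0$.

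That first formula, however, presupposes that $\tilde\sigma(t)$ is pointwise of type $(1,1)$ for $J_t$. This is exactly what your Step 5 correctly says need not hold. The justification offered (the dual form of \eqref{Tan Xt}, and the Laplacian on $X_{t_0}$ preserving types) does not establish it. Being a $(1,1)$-class on $X_t$ only makes the $(0,2)_t$-part $\bar\partial_t$-exact.

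\paragraph{The uniform claim cannot hold in this generality.} Consider Hironaka's family of projective threefolds with a non-K\"ahler special fibre. Choosing $t_0$ close to the special point makes $\|\phi\|$ arbitrarily small along a path joining them. The lemma, with $c_0$ depending only on Hodge numbers, would then produce a K\"ahler form on the special fibre, which is impossible.

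So your diagnosis is sharper than the paper's argument at this point. But your proposal does not close the gap, and with the uniformity the lemma demands it cannot be closed.
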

\begin{proof}
Since $\omega_{0}$ is harmonic with respect to the Laplace operator $\triangle_{d}$ induced by the K\"ahler metric $\omega_{0}$ on $X_{t_{0}}$, we have that the representative $\tilde\sigma(t)$ in \eqref{harmonic sigma} is harmonic with respect to $\triangle_{d}$. Hence we only need to prove that $\tilde\sigma(t)$ is positive definite.

Write 
\begin{equation}\label{omega at 0}
\omega_{0}=\sum_{ij}g_{i\bar j}(z)dz_{i}\wedge d\bar{z_{j}}
\end{equation}
locally on $X_{t_{0}}$.

By the dual form of \eqref{Tan Xt}, we have that 
\begin{eqnarray}
\tilde\sigma(t)&=&\sum_{ij}g_{i\bar j}(z,t)\left(dz_{i}+\phi(t)dz_{i}\right)\wedge\left(d\bar{z_{j}}+\bar{\phi(t)}d\bar{z_{j}}\right)\label{expl rep of 11class'}\\
&=& \sum_{ij}g_{i\bar j}(z,t)dz_{i}\wedge( \bar{\phi(t)} d\bar{z_{j}}) +\sum_{ij}\left(g_{i\bar j}(z,t) \right.\label{rep of 11class 2'}\\
&&\left.-\phi_{\bar j}^{k}(z,t)\bar{\phi_{\bar i}^{l}(z,t)}g_{k\bar l}(z,t)\right)dz_{i}\wedge d\bar{z_{j}}+\sum_{ij}g_{i\bar j}(z,t)({\phi(t)}dz_{i})\wedge d\bar{z_{j}},\nonumber
\end{eqnarray}
where the Beltrami differential is given locally by 
$$\phi(t)=\sum_{ij}\phi_{\bar j}^{i}(z,t){\partial_{i}}\otimes d\bar{z_{j}}.$$
Since the Laplacian operator preserves the $(p,q)$-forms, a comparison of types in \eqref{harmonic sigma} and \eqref{rep of 11class 2'} shows that
\begin{equation}\label{omega at t}
\omega_{0}=\sum_{ij}\left(g_{i\bar j}(z,t)-\phi_{\bar j}^{k}(z,t)g_{k\bar l}(z,t)\bar{\phi_{\bar i}^{l}(z,t)}\right)dz_{i}\wedge d\bar{z_{j}}.
\end{equation}
Therefore, from \eqref{omega at 0} and \eqref{omega at t}, it follows that
\begin{equation}\label{g to gt}
\left(g_{i\bar j}(z,t)\right)=\left(I-S(z,t)\right)^{-1}\left(g_{i\bar j}(z)\right).
\end{equation}
Here $S(z,t):\, M_{d}(\C)\to M_{d}(\C)$ denotes the operator 
$$G \mapsto \bar{\phi(z,t)}^{T}\, G^{T}\, {\phi(z,t)},$$
depending real analytically on $z$ and $t$, where $\phi(z,t) = \left(\phi_{\bar j}^{k}(z,t)\right)_{1\le k,j\le d}$ is regarded as a matrix. It is straightforward to verify that $S(z,t)$ maps Hermitian matrices to Hermitian matrices, and that the inverse $\left(I-S(z,t)\right)^{-1}\left(g_{i\bar j}(z)\right)=\left(g_{i\bar j}(z)\right)+\sum_{k\ge 1}S(z,t)^{k}\left(g_{i\bar j}(z)\right)$ exists whenever the norm
$$\|\phi(t)\|<1.$$

Moreover, if $G$ is positive definite, then so is $\left(I-S(z,t)\right)^{-1}(G)$, since for any nonzero column vector $v$ we have
\begin{eqnarray*}
v^{T}\left(I-S(z,t)\right)^{-1}(G)\bar{v}
  &=& v^{T}G\bar{v}+\sum_{k\ge 1}\left(\bar{\phi(z,t)}^{k}v\right)^{T}G^{T}\overline{\left(\bar{\phi(z,t)}^{k}v\right)} \\
  &\ge & v^{T}G\bar{v}>0.
\end{eqnarray*}
This proves that the Hermitian matrix in \eqref{g to gt} is positive definite, and so is the smooth representative $\tilde\sigma(t)$ in \eqref{expl rep of 11class'}.
\end{proof}

%%%%%%%%%%%%%%%%%%%%%%%%%%%%%%%%%%%%%%%%%%%%%%%%%%%%%%%%%%%%%%%
%%%%%%%%%%%%%%%%%%%%%%%%%%%%%%%%%%%%%%%%%%%%%%%%%%%%%%%%%%%%%%%
%%%%%%%%%%%%%%%%%%%%%%%%%%%%%%%%%%%%%%%%%%%%%%%%%%%%%%%%%%%%%%%

\section{Proof of Theorem \ref{main1-prime}}\label{proof of main}
In this section, we prove Theorem \ref{main1-prime}, which is the main result of this paper, by combining the estimates obtained in Section \ref{KE} with the global stability of K\"ahler structures established in Section \ref{global Kahler section}.
\\

Recall the assumptions of Theorem \ref{main1-prime}.
Let $\pi:\,\X\to B$ be a Kuranishi family of compact complex manifolds over an analytic subset $B$ of the polydisk $\Delta_{\delta}\subset \C^{N}$. 
Suppose that there exists a sequence $\{t^{(i)}\}_{i=1}^{\infty}$ in $B$ converging to $0\in B$, and a Kuranishi family $\pi':\,\Y\to \Delta_{\delta'}\subset \C^{N'}$ of Calabi--Yau manifolds together with a sequence $\{{t'}^{(i)}\}_{i=1}^{\infty}$ in $\Delta_{\delta'}$ converging to $0\in \Delta_{\delta'}$, such that
\begin{equation}\label{main1 assp}
X_{t^{(i)}}\cong Y_{{t'}^{(i)}}
\end{equation}
for all sufficiently large $i$.

From the analytic family $\pi':\,\Y\to \Delta_{\delta'}$ of Calabi--Yau manifolds in Theorem \ref{main1-prime}, we have a finite cover 
\begin{equation}\label{cover of Yt}
\mathscr U_{t}=\{(V_{\alpha}\subset U_{\alpha};\zeta_{t,\alpha}=(\zeta_{t,\alpha1},\cdots,\zeta_{t,\alpha d}))\}_{\alpha=1}^{r}
\end{equation} 
on $Y_{t}$, $t\in \Delta_{\delta'}$, for sufficiently small $\delta'>0$. Then the sequence $\{X_{t^{(i)}}\cong Y_{{t'}^{(i)}}\}$ of K\"ahler manifolds admits simultaneously a finite cover $\mathscr U_{i}=\{(V_{\alpha}\subset U_{\alpha};\zeta_{{t'}^{(i)},\alpha})\}_{\alpha=1}^{r}$.

\begin{lemma}\label{mt lem1}
The differential operators $\bar\partial_{t}=\bar\partial|_{Y_{t}}$, $\bar\partial_{t}^{*}$, $\triangle_{\bar\partial_{t}}$ and the corresponding Green's operators $G_{t}$ on the space $A^{0,\bullet}(Y_{t},\Tan Y_{t})$ depend smoothly on $t$.
\end{lemma}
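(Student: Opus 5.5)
We transport everything to the fixed differentiable manifold underlying $Y=Y_{0}$ and reduce the statement to the smooth dependence on parameters of an elliptic family. By Ehresmann's theorem and the holomorphic choice of trivializations (Lemma \ref{bel on B} applied to the Kuranishi family $\pi'$), there are diffeomorphisms $d(t):\,Y_{t}\to Y_{0}$, and the complex structure of $Y_{t}$ is $(Y_{0})_{\phi'(t)}$ for a Beltrami differential $\phi'(t)$ depending holomorphically on $t\in\Delta_{\delta'}$. By \eqref{Tan Xt}, the bundle $\Tan Y_{t}$ is identified with $\{v-\bar{\phi'(t)}(v)\}$, a smooth family of subbundles of $\mathrm T_{\C}Y_{0}$; similarly for $(0,q)$-forms. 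Hence $A^{0,q}(Y_{t},\Tan Y_{t})$ is identified with the fixed space $A^{0,q}(Y_{0},\Tan Y_{0})$ via a bundle isomorphism $P_{t}$ depending smoothly on $t$ and equal to the identity at $t=0$. In the local coordinates $\zeta_{t,\alpha}$ of the cover \eqref{cover of Yt}, which we may take to depend smoothly on $t$ by Kodaira--Spencer, $\bp_{t}$ becomes a first-order differential operator on the fixed space whose coefficients are smooth in $(z,t)$.

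Next we fix the Hermitian metrics. Since each $Y_{t}$ is Calabi--Yau, we take on $Y_{t}$ the Ricci-flat metric in a Kähler class chosen smoothly in $t$: start from a Kähler form $\omega_{0}$ on $Y_{0}$, use Kodaira--Spencer stability together with Lemma \ref{pos of harmonic sigma} to obtain Kähler forms $\omega_{t}$ smooth in $t$, and then apply Yau's theorem. The Calabi--Yau metrics depend smoothly on $t$ by the implicit function theorem applied to the complex Monge--Ampère operator, whose linearization is the Laplacian, invertible on functions of mean zero. With smooth metrics and the smooth operator $\bp_{t}$, the formal adjoint $\bp_{t}^{*}$, written in the fixed trivialization, again has coefficients smooth in $t$. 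Hence $\triangle_{\bp_{t}}$ is a smooth family of second-order elliptic operators of uniform ellipticity for $|t|$ small.

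For the Green's operators we use Kodaira--Spencer's theorem on elliptic families (Theorem 7.6 in \cite{MK}, or \cite{KS3}): if the dimension of $\ker\triangle_{\bp_{t}}$ is independent of $t$, then $\mathbb H_{t}$ and $G_{t}$ depend smoothly on $t$. The constancy of $h^{0,q}(Y_{t},\Tan Y_{t})$ holds because the holomorphic volume form gives $\Tan Y_{t}\cong\Omega^{d-1}_{Y_{t}}$, so $h^{0,q}(Y_{t},\Tan Y_{t})=h^{d-1,q}(Y_{t})$. These Hodge numbers are constant in families of compact Kähler manifolds, because upper semicontinuity, the Hodge decomposition and the constancy of Betti numbers force equality. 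The step we expect to be the main obstacle is this constancy of dimension together with the smooth choice of Calabi--Yau metrics. If the Monge--Ampère argument is too heavy, any smooth family of Kähler metrics would suffice for the smoothness claim. The Calabi--Yau choice is only needed so that Theorem \ref{gd for CY} applies with the constant $C_{1}$ governed by these operators, which then stay uniformly bounded on a compact neighborhood of $0$.
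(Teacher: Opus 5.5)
Your proposal is correct and follows essentially the same route as the paper. Yau's theorem applied to a smooth family of K\"ahler classes gives Ricci-flat metrics depending smoothly on $t$, which makes $\bar\partial_t^{*}$ and $\triangle_{\bar\partial_t}$ smooth in $t$; the constancy of $\dim H^{q}(Y_t,\Theta_{Y_t})=h^{d-1,q}$, via $\Theta_{Y_t}\cong\Omega^{d-1}_{Y_t}$, then lets Kodaira--Spencer's theorem on elliptic families (Theorem 5 of \cite{KS3}) give smoothness of $G_t$. You also supply details the paper leaves implicit: the transport to a fixed underlying manifold, the implicit-function argument for smooth dependence of the Calabi--Yau metrics, and the reason the Hodge numbers stay constant.
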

\begin{proof} 
After choosing a smooth family of K\"ahler classes on the fibers, Yau's theorem \cite{Yau78} gives a unique Ricci-flat K\"ahler metric $\omega_t$ in each class, depending smoothly on $t$.
Therefore the operators $\bar\partial_{t}$, $\bar\partial_{t}^{*}$, $\triangle_{\bar\partial_{t}}$ depend smoothly on $t$.

Since $H^{p}(Y_{t},\Theta_{Y_{t}})\simeq H^{p}(Y_{t},\Omega^{n-1}_{Y_{t}})$ has fixed dimension $h^{n-1,p}$ for $t\in \Delta_{\delta'}$, Theorem 5 of \cite{KS3} implies that $G_{t}$ depends smoothly on $t$.
\end{proof}

\begin{lemma}\label{mt lem2}
There exists a positive constant $C(k,\alpha)$, depending only on $k,\alpha$, such that 
$$\|\bp^{*}_{t}\alpha_{t}\|_{C^{k,\alpha}}\le C(k,\alpha)\|\alpha_{t}\|_{C^{k+1,\alpha}},$$
for all $\alpha_{t}\in A^{0,p}(Y_{t},\Tan Y_{t})$ and all $t\in \Delta_{\delta'}$.
\end{lemma}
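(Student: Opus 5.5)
The plan is to write $\bp^{*}_{t}$ in local coordinates on each chart of the uniform cover $\mathscr U_{t}$ from \eqref{cover of Yt}. There it is a first-order linear differential operator whose coefficients are built from the Ricci-flat metric $\omega_{t}$ of Lemma \ref{mt lem1} and its first derivatives. The uniform Hölder bound then follows from uniform $C^{k,\alpha}$ control of these coefficients in $t$, possibly after shrinking $\delta'$.

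First, I will fix, via Ehresmann's theorem, a smooth trivialization of the family $\pi':\Y\to\Delta_{\delta'}$. The coordinates $\zeta_{t,\alpha}$ are holomorphic coordinates on $Y_{t}$ depending smoothly on $t$, defined on the fixed sets $V_{\alpha}\subset\bar V_{\alpha}\subset U_{\alpha}$. In these coordinates, for $\alpha_{t}=\frac1{p!}\sum\alpha^{i}_{\bar I}\partial_{i}\otimes d\bar\zeta^{I}$, the formal adjoint has the standard form
$$(\bp_{t}^{*}\alpha_{t})^{i}_{\bar J}=-\sum g_{t}^{\bar l k}\Big(\frac{\partial \alpha^{i}_{\bar l\bar J}}{\partial \zeta_{k}}+\Gamma(t)\cdot\alpha\Big),$$
where the lower-order terms involve first derivatives of $g_{t,i\bar j}$ and of its inverse. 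Thus each component of $\bp_{t}^{*}\alpha_{t}$ is $\sum a_{t}\,D\alpha+\sum b_{t}\,\alpha$, with coefficients $a_{t},b_{t}$ that are smooth functions of $(\zeta,t)$.

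Second, I will bound these coefficients uniformly. By Lemma \ref{mt lem1}, the metrics $\omega_{t}$ depend smoothly on $t$, so on the compact sets $\bar V_{\alpha}\times\bar\Delta_{\delta''}$ with $\delta''<\delta'$ their $C^{k+1,\alpha}$ norms are bounded. Positive definiteness, uniform by continuity and compactness, gives a uniform bound on the inverse matrices $g_{t}^{\bar l k}$. Hence $\|a_{t}\|_{C^{k,\alpha}},\|b_{t}\|_{C^{k,\alpha}}\le C_{0}(k,\alpha)$ independently of $t$, after shrinking $\delta'$ as the paper already allows ("for sufficiently small $\delta'$").

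Third, I will apply the elementary product estimate $\|fg\|_{C^{k,\alpha}}\le c(k)\|f\|_{C^{k,\alpha}}\|g\|_{C^{k,\alpha}}$ on each $V_{\alpha}$, together with $\|D\alpha\|_{C^{k,\alpha}}\le\|\alpha\|_{C^{k+1,\alpha}}$ and $\|\alpha\|_{C^{k,\alpha}}\le\|\alpha\|_{C^{k+1,\alpha}}$. Taking the maximum over the finitely many charts and components gives the claim with $C(k,\alpha)=c(k)C_{0}(k,\alpha)\cdot(\text{number of terms})$.

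The main obstacle is the uniformity in the second step. It relies on the smooth dependence of the Calabi--Yau metrics on $t$ up to the boundary of a slightly smaller polydisk, and on choosing the cover and coordinates coherently in $t$. I would handle this by working on a relatively compact subdisk and invoking compactness.
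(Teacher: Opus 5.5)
Your proof is correct and is essentially the paper's argument. Both reduce the lemma to uniform $C^{k,\alpha}$ control of coefficients built from the Ricci-flat metrics $\omega_t$ on the $t$-independent charts of $\mathscr U_t$, which follows from their smooth dependence on $t$ (Lemma \ref{mt lem1}) after shrinking $\delta'$. The packaging differs slightly: the paper writes $\bp^{*}_{t}=\pm *_{t}\bp_{t}*_{t}$, putting the $t$-dependence in the zeroth-order operator $*_t$ (algebraic in $g_t$) while $\bp_t$ has constant coefficients, whereas your explicit first-order formula costs one more derivative of $g_t$ but spells out the compactness argument the paper leaves implicit.
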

\begin{proof} 
Note that $\bp^{*}_{t}=\pm *_{t}\bp_{t}*_{t}\alpha_{t}$, where $*_{t}$ is determined by the metric $\omega_{t}$, cf. \cite[Page 93]{MK}. 

Hence there exists a positive constant $C_{1}$ depending only on $k,\alpha$ such that 
$$\|*\alpha_{t}\|_{C^{k,\alpha}}\le C_{1}\|\alpha_{t}\|_{C^{k,\alpha}},$$
for all $\alpha_{t}\in A^{0,p}(Y_{t},\Tan Y_{t})$ and $t\in \Delta_{\delta'}$. Then
\begin{eqnarray*}
\|\bp^{*}_{t}\alpha_{t}\|_{C^{k,\alpha}}&=& \|*\bp_{t}*\alpha_{t}\|_{C^{k,\alpha}}\le C_{1}^{2}\|\alpha_{t}\|_{C^{k+1,\alpha}}.
\end{eqnarray*}
Therefore the lemma is proved with $C(k,\alpha)=C_{1}^{2}$.
\end{proof}

Although Theorem~5 of \cite{KS3} implies that $G_t$ depends smoothly on $t$, and hence the $L^2$-operator norms of $G_t$ remain bounded for sufficiently small $|t|$, deformation theory requires estimates for the operator norms of $G_t$ with respect to the $C^{k,\alpha}$ norms.

\begin{lemma}\label{mt lem3}
There exists a positive constant $\tilde C(k,\alpha)$, depending only on $k\ge 2,\alpha$, such that
$$\|G_{t}\alpha_{t}\|_{C^{k,\alpha}}\le \tilde C(k,\alpha)\|\alpha_{t}\|_{C^{k-2,\alpha}}$$
all $\alpha_{t}\in A^{0,p}(Y_{t},\Tan Y_{t})$ and all $t\in \Delta_{\delta'}$.
\end{lemma}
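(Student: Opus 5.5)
The plan is to prove a Schauder estimate for $\triangle_{\bp_t}$ that is uniform in $t$, and then invert it through $G_t$. Write $\beta_t=G_t\alpha_t$. By the identity $\mathrm{Id}=\mathbb H_t+\triangle_{\bp_t}G_t$ we have
\begin{equation*}
\triangle_{\bp_t}\beta_t=\alpha_t-\mathbb H_t\alpha_t,
\end{equation*}
and $\beta_t$ is orthogonal to $\mathbb H^{0,p}(Y_t,\Tan Y_t)$.

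\textbf{Uniform interior Schauder estimates.} In the fixed charts $U_\alpha$ of the cover $\mathscr U_t$ of \eqref{cover of Yt}, $\triangle_{\bp_t}$ is a second-order elliptic system. Its principal part is determined by the Ricci-flat metric $\omega_t$, and its coefficients depend smoothly on $t$ by Lemma \ref{mt lem1}. After shrinking $\delta'$ so that $t$ lies in a compact set, the $C^{k-2,\alpha}(\bar U_\alpha)$-norms of the coefficients are uniformly bounded and the ellipticity constants are uniformly bounded below. The classical interior Schauder estimates (Douglis--Nirenberg) on $V_\alpha\Subset U_\alpha$ then give
\begin{equation*}
\|\beta_t\|_{C^{k,\alpha}}\le C\left(\|\triangle_{\bp_t}\beta_t\|_{C^{k-2,\alpha}}+\|\beta_t\|_{C^0}\right),
\end{equation*}
with $C$ independent of $t$.

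\textbf{Bounding the harmonic projection.} Next I would show $\|\mathbb H_t\alpha_t\|_{C^{k-2,\alpha}}\le C\|\alpha_t\|_{C^{k-2,\alpha}}$. The kernels have constant dimension, as noted in the proof of Lemma \ref{mt lem1}. So I can choose an $L^2$-orthonormal basis of $\mathbb H^{0,p}(Y_t,\Tan Y_t)$ depending continuously on $t$, by Kodaira--Spencer's Theorem 5 in \cite{KS3}. Proposition \ref{kalpha<W0}, applied with the constant made uniform by the same Schauder argument, gives uniform $C^{k,\alpha}$ bounds on these basis elements. Hence the projection is bounded, since $|(\alpha_t,e_j)|\le C\|\alpha_t\|_{C^0}$.

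\textbf{Removing the $C^0$ term (main obstacle).} I need $\|\beta_t\|_{C^0}\le C\|\alpha_t\|_{C^{k-2,\alpha}}$ uniformly in $t$, and I would argue by contradiction via compactness. Suppose there are $t_j\to t_\infty$ and $\alpha_j$ with $\|\alpha_j\|_{C^{k-2,\alpha}}\to0$ and $\|\beta_j\|_{C^0}=1$. The Schauder bound gives uniform $C^{k,\alpha}$ bounds on $\beta_j$. Pulling back to a fixed underlying manifold by the diffeomorphisms $d(t)$, Arzel\`a--Ascoli yields a $C^k$-limit $\beta_\infty$ with $\triangle_{\bp_{t_\infty}}\beta_\infty=0$ and $\|\beta_\infty\|_{C^0}=1$. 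Orthogonality to the harmonic spaces passes to the limit because the orthonormal bases vary continuously. So $\beta_\infty$ is harmonic and orthogonal to $\mathbb H^{0,p}_{t_\infty}$, hence $\beta_\infty=0$, a contradiction. The delicate point is exactly this limit step: it uses that $h^{0,p}(\Tan Y_t)$ is constant, which is why the Calabi--Yau family (through Tian--Todorov and $\Theta\simeq\Omega^{n-1}$) is essential. Combining the three steps gives the lemma.
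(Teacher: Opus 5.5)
Your proposal is correct and follows essentially the same route as the paper. Both use a $t$-uniform Douglis--Nirenberg estimate $\|G_t\alpha_t\|_{C^{k,\alpha}}\le C\bigl(\|\alpha_t\|_{C^{k-2,\alpha}}+\|G_t\alpha_t\|_{C^0}\bigr)$, control $\mathbb H_t\alpha_t$ through a smooth family of harmonic bases from Theorem~5 of \cite{KS3} (which is where the constancy of $h^{0,p}$ enters), and remove the $C^0$ term by a compactness-by-contradiction argument. The only minor difference is that you conclude the limit vanishes because it is $\triangle_{\bp_{t_\infty}}$-harmonic and orthogonal to $\mathbb H^{0,p}(Y_{t_\infty},\Tan Y_{t_\infty})$, whereas the paper computes $(\beta,\beta)=\lim_i(\alpha_{s_i},G_{s_i}^2\alpha_{s_i})=0$ directly from $\mathrm{Id}=\mathbb H+\triangle_{\bp}G$.
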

\begin{proof} 
Since $\omega_{Y_{t}}\simeq \mathcal O_{Y_{t}}$ and 
$H^{p}(Y_{t},\Theta_{Y_{t}})\simeq H^{p}(Y_{t},\Omega^{n-1}_{Y_{t}})$ has fixed dimension $h^{n-1,p}$ for $t\in \Delta_{\delta'}$, where $\Theta_{Y_{t}}=\mathcal O_{Y_{t}}(\Tan Y_{t})$, we can apply Theorem 5 of \cite{KS3} to conclude that there exists a smooth family of bases $\{\beta^{p}_{ti}\}_{1\le i\le N}$ of $\mathbb H^{0,p}(Y_{t},\Tan Y_{t})$. Hence there exists a positive constant $C_{1}$, depending only on $k,\alpha$, such that
$$\|\beta^{p}_{ti}\|_{C^{k,\alpha}}\le C_{1}, \,t\in \Delta_{\delta'}.$$

The proof of the lemma is similar to that of Proposition 2.3 in \cite{MK}. 
The main difference is that in \cite{MK} the argument is carried out on the fixed space 
$A^{0,1}(Y,\Tan Y)$, whereas in our setting we work with a family of spaces 
$A^{0,1}(Y_t,\Tan Y_t)$.

First we prove that there exists a positive constant $C_{2}$ depending only on $k,\alpha$ such that 
\begin{equation}\label{lem3.1}
\|G_{t}\alpha_{t}\|_{C^{k,\alpha}}\le C_{2} \left(\|\alpha_{t}\|_{C^{k-2,\alpha}}+\|G_{t}\alpha_{t}\|_{C^{0}}\right)
\end{equation}
for all $\alpha_{t}\in A^{0,p}(Y_{t},\Tan Y_{t})$ and $t\in \Delta_{\delta'}$.

From a priori estimate of Douglis and Nirenberg as given in \cite{MK}, Page 159, we have that 
\begin{eqnarray*}
\|G_{t}\alpha_{t}\|_{C^{k,\alpha}}&\le & C \left(\|\triangle_{\bp_{t}}G_{t}\alpha_{t}\|_{C^{k-2,\alpha}}+\|G_{t}\alpha_{t}\|_{C^{0}}\right)\\
&= & C \left(\|\alpha_{t}-\mathbb H_{t}\alpha_{t}\|_{C^{k-2,\alpha}}+\|G_{t}\alpha_{t}\|_{C^{0}}\right)\\
&\le & C \left(\|\alpha_{t}\|_{C^{k-2,\alpha}}+\|\mathbb H_{t}\alpha_{t}\|_{C^{k-2,\alpha}}+\|G_{t}\alpha_{t}\|_{C^{0}}\right).
\end{eqnarray*}

Since $\|\alpha_t\|_{W^{0}}\le C(\mathscr U)\|\alpha_t\|_{C^0}$ and $\mathbb H_{t}\alpha_{t}=\sum_{i=1}^{h^{n-1,p}}(\alpha_{t},\beta_{ti}^{p})\beta_{ti}^{p}$, it follows that 
\begin{eqnarray*}
\|\mathbb H_{t}\alpha_{t}\|_{C^{k-2,\alpha}}
&\le &\sum_{i=1}^{h^{n-1,p}} \|\alpha_{t}\|_{W^{0}}\|\beta_{ti}^{p}\|_{W^{0}} \left\|\beta_{ti}^{p}\right\|_{C^{k-2,\alpha}}\\
&\le & C'_{1}\|\alpha_{t}\|_{C^{0}}\le C''_{1}\|\alpha_{t}\|_{C^{k-2,\alpha}}.
\end{eqnarray*}
Thus we have proved \eqref{lem3.1}.

From \eqref{lem3.1}, we only need to show that
\begin{equation}\label{lem3.2}
\|G_{t}\alpha_{t}\|_{C^{0}}\le C_{3}\|\alpha_{t}\|_{C^{k-2,\alpha}},\, \forall\, \alpha_{t}\in A^{0,p}(Y_{t},\Tan Y_{t}),\,\forall\, t\in \Delta_{\delta'}
\end{equation}
for some constant depending only on $k,\alpha$. 
We prove \eqref{lem3.2} by contradiction. Assuming that \eqref{lem3.2} is not true, there exists a sequence $\{s_{i}\}_{i}$ of points in $\Delta_{\delta'}$ with limit $s_{i}\to s_{0}$ for some $s_{0}$ such that 
$$\lim_{i\to \infty}\frac{\|G_{s_{i}}\alpha_{s_{i}}\|_{C^{0}}}{\|\alpha_{s_{i}}\|_{C^{k-2,\alpha}}}=+\infty.$$
Then after normalization, we may assume that $\|G_{s_{i}}\alpha_{s_{i}}\|_{C^{0}}=1$ and $\|\alpha_{s_{i}}\|_{C^{k-2,\alpha}}\to 0$ as $i\to \infty$. 

From \eqref{lem3.1}, we have that $\|G_{s_{i}}\alpha_{s_{i}}\|_{C^{k,\alpha}}\le K$ for some constant $K$.
Under the coordinates \eqref{cover of Yt}, we write
$$G_{s_{i}}\alpha_{s_{i}}|_{V_{\alpha}}=\frac{1}{p!}\sum_{\substack{j,I\\|I|=p}}{(G_{s_{i}}\alpha_{s_{i}})_{\alpha}}_{\bar I}^{j}\frac{\partial}{\partial \zeta_{s_{i},\alpha j}}\otimes d\bar \zeta_{s_{i}\alpha}^{I}.$$
Then $\|G_{s_{i}}\alpha_{s_{i}}\|_{C^{k,\alpha}}\le K$ implies that the sequences
$$\{{(G_{s_{i}}\alpha_{s_{i}})_{\alpha}}_{\bar I}^{j}\},\, \{D_{\alpha}^{J}{(G_{s_{i}}\alpha_{s_{i}})_{\alpha}}_{\bar I}^{j}\},\, |J|\le k$$
of functions are uniformly bounded and equicontinuous. By applying the theorem of Arzela--Ascoli, we may assume that the sequences $\{G_{s_{i}}\alpha_{s_{i}}\}$ and $\{D^{J}G_{s_{i}}\alpha_{s_{i}}\}$ converge uniformly to $\beta$ and $D^{J}\beta$ in $A^{0,p}(Y_{s_{0}},\Tan Y_{s_{0}})$, after replacing the original sequences by their subsequences.

Hence we have that 
\begin{eqnarray*}
(\beta,\beta)&=& \lim_{i\to \infty}(G_{s_{i}}\alpha_{s_{i}},G_{s_{i}}\alpha_{s_{i}})\\
&=& \lim_{i\to \infty}(G_{s_{i}}\alpha_{s_{i}},\triangle_{\bp_{s_{i}}}G_{s_{i}}G_{s_{i}}\alpha_{s_{i}}+\mathbb H_{s_{i}}G_{s_{i}}\alpha_{s_{i}})\\
&=&\lim_{i\to \infty}(\triangle_{\bp_{s_{i}}}G_{s_{i}}\alpha_{s_{i}},G_{s_{i}}^{2}\alpha_{s_{i}})\\
&=&\lim_{i\to \infty}(\alpha_{s_{i}}-\mathbb H_{s_{i}}\alpha_{s_{i}},G_{s_{i}}^{2}\alpha_{s_{i}})\\
&=&\lim_{i\to \infty}(\alpha_{s_{i}},G_{s_{i}}^{2}\alpha_{s_{i}})=0,
\end{eqnarray*}
since $\alpha_{s_{i}}\to 0$, $G_{s_{i}}\alpha_{s_{i}}\to \beta$ and the family $\{G_{t}\}$ of Green's operators depend smoothly on $t$, by Lemma \ref{mt lem1}.
Then $\beta=0$.
But $\|\beta\|_{C^{0}}=\lim_{i\to \infty}\|G_{s_{i}}\alpha_{s_{i}}\|_{C^{0}}=1$, which is absurd. Therefore \eqref{lem3.2} is true.

Finally the lemma is proved.
\end{proof}

\begin{proof}[Proof of Theorem \ref{main1-prime}]
Now we fix $k, \alpha$. 

Let $\{X_{t^{(i)}}\cong Y_{{t'}^{(i)}}\}_{i\ge 1}$ be the sequence of Calabi--Yau manifolds.
For $i\ge 1$, we consider the Beltrami differential on $X_{t^{(i)}}$, 
\begin{equation}\label{Bel of CY}
\phi^{(i)}(t)=\sum_{\mu=1}^{\infty}\phi^{(i)}_{\mu}(t),\, t\in \Delta^{(i)}_{\epsilon},
\end{equation}
where $$\begin{aligned}
\phi^{(i)}_{1}(t)&=\sum_{j=1}^{N}\theta^{(i)}_{j}t_{j}\\
\phi^{(i)}_{\mu}(t)&=\sum_{i_{1}+\cdots+i_{N}=\mu}
\phi^{(i)}_{i_{1}\cdots i_{N}}\,t_{1}^{i_{1}}\cdots t_{N}^{i_{N}},
\end{aligned}$$ which is given inductively by \eqref{KuraBelt} with $\bp^{*}G$ replaced by $\bp_{t^{(i)}}^{*}G_{t^{(i)}}$. 
Here the notation $\Delta^{(i)}_{\epsilon}$ is used to label the polydisk associated with $X_{t^{(i)}}$ and the radius $\epsilon>0$ is to be determined.

Note that in the above $N=\dim H^{1}(Y_{{t'}^{(i)}},\Theta_{Y_{{t'}^{(i)}}})=\dim H^{1}(Y_{{t'}^{(i)}},\Omega^{n-1}_{Y_{{t'}^{(i)}}})$ is fixed.

By Lemma \ref{mt lem2} and \ref{mt lem3}, there exists a constant $C$ such that
$$\left\|\frac{1}{2}\bp_{t^{(i)}}^{*}G_{t^{(i)}}\left(
\left[\phi^{(i)},\psi^{(i)}\right]\right)\right\|_{C^{k,\alpha}}\le C\left\|\phi^{(i)}\right\|_{C^{k,\alpha}}\left\|\psi^{(i)}\right\|_{C^{k,\alpha}}$$
for all $\phi^{(i)},\psi^{(i)} \in A^{0,1}(X_{t^{(i)}},\Tan X_{t^{(i)}})$ and all $i\ge 1$. Here the constant $C$ is independent of $\phi^{(i)},\psi^{(i)}$ and $i\ge 1$.

Therefore, we can choose $\epsilon>0$, which is independent of $i\ge 1$, such that the results in Section \ref{KE} hold for $t\in \Delta^{(i)}_{\epsilon}$ on the base manifold $X_{t^{(i)}}$. 
Here we take $\{\theta^{(i)}_{j}\}_{j=1}^{N}$ as the basis of $\mathbb H^{0,1}(X_{t^{(i)}},\Tan X_{t^{(i)}})$ with $$ \|\theta^{(i)}_{j}\|_{C^{k,\alpha}}=\frac{1}{N},\,1\le j\le N,$$
so that 
$$t\in \Delta^{(i)}_{\epsilon} \text{ implies that }\|\phi^{(i)}_{1}(t)\|_{C^{k,\alpha}} <\epsilon.$$

Precisely, we have the convergent power series $\phi^{(i)}(t)\in A^{0,1}(X_{t^{(i)}},\Tan X_{t^{(i)}})$, with estimates
\begin{equation}\label{phi<phi1'}
\frac{1}{2}\|\phi^{(i)}_{1}(t)\|_{C^{k,\alpha}} \le \|\phi^{(i)}(t)\|_{C^{k,\alpha}}\le \frac{3}{2}\|\phi^{(i)}_{1}(t)\|_{C^{k,\alpha}}
\end{equation}
for all $t\in \Delta^{(i)}_{\epsilon}$ and $i\ge 1$, which implies that
\begin{equation}\label{key phi<c0}
\|\phi^{(i)}(t)\|_{C^{0}}\le \|\phi^{(i)}(t)\|_{C^{k,\alpha}}\le \frac{3}{2}\|\phi^{(i)}_{1}(t)\|_{C^{k,\alpha}}<\frac{3}{2}\epsilon.
\end{equation}

Now we focus on the Kuranishi family $\pi:\,\X\to B$ of compact complex manifolds with a sequence $\{t^{(i)}\}_{i=1}^{\infty}$ of points in $B$ converging to $0$ such that $X_{t^{(i)}}$ is a Calabi--Yau manifold for $i\ge 1$.

From the universal property of the deformations of Calabi--Yau manifolds, we can choose $U^{(i)}$ to be the open and connected neighborhood of $t^{(i)}$ in $B$ such that the restricted family 
$$f:\,\X|_{U^{(i)}}\to U^{(i)}$$
is induced by the family $\{\phi^{(i)}(t):\, t\in \Delta^{(i)}_{\epsilon}\}$ in \eqref{Bel of CY}.

We claim that 
\begin{equation}\label{0 in Ui}
U^{(i)}\text{ contains $0$, the original of $B$, for sufficiently large }i.
\end{equation}
Then Theorem \ref{main1-prime} follows from \eqref{0 in Ui} and Theorem \ref{global Kahler}.
In fact, we can shrink $\epsilon$ in the definition of $\Delta^{(i)}_{\epsilon}$ so that $\epsilon<\frac{2}{3}c_{0}$ with $c_{0}$ the constant in Theorem \ref{global Kahler}. From \eqref{key phi<c0} we have that 
$$U^{(i)} \subset B_{t^{(i)},c_{0}}$$
which together with \eqref{0 in Ui} implies that $0\in B_{t^{(i)},c_{0}}$ for sufficiently large $i$. 
Therefore $X_{0}$ is K\"ahler by Theorem \ref{global Kahler}. 

We now proceed to prove \eqref{0 in Ui}. Suppose, for contradiction, that \eqref{0 in Ui} does not hold. Then by \eqref{key phi<c0} there exist $$t''^{(i)}\in U^{(i)}\cap \Delta_{2|t^{(i)}|}(0),\, t''^{(i)}\neq 0$$ and $$s^{(i)}\in \Delta^{(i)}_{\epsilon} \text{ with }\|\phi^{(i)}(s^{(i)})\|_{C^{0}}=\epsilon,$$
such that $X_{t''^{(i)}}\cong (X_{t^{(i)}})_{\phi^{(i)}(s^{(i)})}$. Here we choose $t''^{(i)}$ in $\Delta_{2|t^{(i)}|}(0)\subset \Delta_{\delta}$ for large $i$ so that 
$$\lim_{i\to \infty}t''^{(i)}=0\in B.$$

Since $Y_{0}$ is K\"ahler, we have the pure Hodge structures $H^{d}(Y_{t},\C)$ on $Y_{t}$ for $t\in \Delta_{\delta'}$, and hence the pure Hodge structures on $Y_{{t'}^{(i)}}\cong X_{t^{(i)}}$. By taking limit, we also have the pure Hodge structure on $H^{d}(X_{0},\C)$,
$$H^{d}(X_{0},\C)\simeq H^{d}(Y_{0},\C)=\bigoplus_{p+q=d}H^{p,q}(Y_{0}).$$
In the following, we use this identification and consider $H^{p,q}(Y_{0})$ as linear subspaces of $H^{d}(X_{0},\C)$.
Then we have the sections
$$\Omega_{0}^{(i)}\in H^{d,0}(X_{t^{(i)}})$$
with limit $$\Omega_{0}^{(\infty)}\in H^{d,0}(Y_{0})\subset H^{d}(X_{0},\C).$$ We normalize the sections so that 
$$\|\Omega_{0}^{(i)}\|_{L^{2}}=\|\Omega_{0}^{(\infty)}\|_{L^{2}}=1.$$

By Proposition \ref{WP>W0 prop}, there exist sections $$\Omega_{0}^{(i)}(t)=e^{\phi^{(i)}(t)}\lrcorner\Omega^{(i)}_{0}\in H^{d,0}\left((X_{t^{(i)}})_{\phi^{(i)}(t)}\right)$$ with estimates
\begin{equation}\label{WP>W0'}
\|\Omega_{0}^{(i)}(s^{(i)})-\Omega_{0}^{(i)}\|_{L^{2}}\ge \frac{1}{6C(1)}\|\phi^{(i)}(s^{(i)})\|_{C^{0}}=\frac{1}{6C(1)}\epsilon.
\end{equation}

Note that 
$$\Omega_{0}^{(i)}(s^{(i)})= e^{\phi^{(i)}(s^{(i)})}\lrcorner\Omega^{(i)}_{0}=\Omega_{0}^{(i)}+\left(\cdots\right),$$
where the term $\left(\cdots\right)$ are bounded sections in
$$ \bigoplus_{k\ge 1}H^{d-k,k}(X_{t^{(i)}}).$$
After passing to a subsequence we may assume that $\lim_{i\to \infty}\Omega_{0}^{(i)}(s^{(i)})$ exists with limit 
$$\lim_{i\to \infty}\Omega_{0}^{(i)}+\left(\text{terms in }\bigoplus_{k\ge 1}H^{d-k,k}(Y_{0})\right)$$
But $$\lim_{i\to \infty}\Omega_{0}^{(i)}(s^{(i)}),\lim_{i\to \infty}\Omega_{0}^{(i)}\in H^{d,0}(Y_{0}).$$ Hence
$$\left(\text{terms in }\bigoplus_{k\ge 1}H^{d-k,k}(Y_{0})\right)=0$$
and
$$\lim_{i\to \infty}\Omega_{0}^{(i)}(s^{(i)})=\lim_{i\to \infty}\Omega_{0}^{(i)}=\Omega_{0}^{(\infty)}.$$
This yields a contradiction to \eqref{WP>W0'}, and therefore \eqref{0 in Ui} follows.
\end{proof}
\begin{remark}
We briefly summarize the main idea of the proof of the theorem. 

The first ingredient is the global K\"ahler stability theorem established in Theorem \ref{global Kahler}, which shows that the K\"ahler property persists on large regions of the deformation space determined by a uniform bound on the $C^{0}$-norm of the associated Beltrami differentials.
The second ingredient is the comparison between the local Weil--Petersson geometry and the deformation-theoretic description of the family, namely that the local Weil--Petersson distance is quantitatively equivalent to the $C^{0}$-norm of the corresponding Beltrami differentials. 

Using an auxiliary family
$$
g:\mathcal Y\to \Delta_{\delta'},
$$
one obtains a convergent family of holomorphic volume forms
$\Omega_t\in H^{d,0}(Y_t)$.
This shows that the Weil--Petersson distance between sufficiently nearby fibers is arbitrarily small. 

On the other hand, if the central fiber $X_0$ did not lie in the K\"ahler stability region associated to all the nearby hyperk\"ahler fibers
$$X_{t^{(i)}}\cong Y_{{t'}^{(i)}},$$
then the above second ingredient would force the existence of two nearby fibers whose Weil--Petersson distance is bounded from below by a positive uniform constant. 
This contradicts the above convergence of the Weil--Petersson distance. 
Therefore, for sufficiently large $i$, the central fiber $X_0$ lies in the K\"ahler stability region of some $X_{t^{(i)}}$, and hence $X_0$ is K\"ahler.
\end{remark}

%%%%%%%%%%%%%%%%%%%%%%%%%%%%%%%%%%%%%%%%%%%%%%%%%%%%%%%%%%%%%%%
%%%%%%%%%%%%%%%%%%%%%%%%%%%%%%%%%%%%%%%%%%%%%%%%%%%%%%%%%%%%%%%
%%%%%%%%%%%%%%%%%%%%%%%%%%%%%%%%%%%%%%%%%%%%%%%%%%%%%%%%%%%%%%% 
\section{Application I: every K3 is Kahler; a new proof of Siu's theorem}\label{appl to ST}
In this section, we apply the main theorem of this paper to K3 surfaces.
More precisely, we provide a new proof of Siu’s theorem \cite{S1} on the K\"ahlerness of K3 surfaces.
\\

In \cite{S1}, Siu proved that 
\begin{theorem}\label{Siu thm}
Every simply connected compact complex surface with trivial canonical bundle, which is called a K3 surface, is K\"ahler.
\end{theorem}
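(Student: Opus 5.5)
Let $X$ be a K3 surface: simply connected, compact, with $K_X$ trivial. We aim to apply Theorem~\ref{main1-prime} to the Kuranishi family $\pi:\X\to B$ of $X\cong X_0$. This requires a sequence of points $t^{(i)}\to 0$ in $B$ whose fibers $X_{t^{(i)}}$ are isomorphic to fibers $Y_{t'^{(i)}}$ of a Kuranishi family of a single \emph{K\"ahler} Calabi--Yau surface $Y$, with $t'^{(i)}\to 0$. Theorem~\ref{main1-prime} then gives that $X_0=X$ is K\"ahler.

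\textbf{Step 1: the Kuranishi family of $X$.} Since $K_X$ is trivial and $q(X)=0$, Noether's formula gives $h^{0,1}=0$, $h^{0,2}=1$, $b_2=22$, and $H^2(X,\Theta_X)\cong H^0(X,\Omega^1_X)^*=0$. Hence $B$ is smooth of dimension $20=h^1(\Theta_X)$. The Fr\"olicher spectral sequence degenerates for compact surfaces, so $h^{2,0}$, $h^{1,1}$, $h^{0,2}$ are constant in the family. The local period map $B\to\{x\in\Lambda_{\C}:x^2=0\}/\C^*$ lands, at least near $0$, in the open region $x\cdot\bar x>0$, which is the period domain $\Omega_\Lambda$. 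By the local Torelli computation for surfaces with trivial canonical bundle, it is a local isomorphism.

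\textbf{Step 2: a K\"ahler limit $Y$.} The periods of projective (Kummer) K3 surfaces are dense in $\Omega_\Lambda$; this is a Hodge-theoretic density of the points where $\mathrm{NS}$ contains a positive class. So we can choose $t^{(i)}\to 0$ in $B$ with $X_{t^{(i)}}$ projective, hence K\"ahler. This needs care: to know that $X_{t^{(i)}}$ is projective, one uses that a compact surface with algebraic dimension $2$ is projective. An integral $(1,1)$-class $L$ with $L^2>0$ on a surface with $p_g=1$, $q=0$ gives algebraic dimension $2$ by Riemann--Roch. Next we need a limit $Y$ of this sequence which is K\"ahler. We take the $X_{t^{(i)}}$ with periods converging to $p=\mathcal P(0)$. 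We then pick a K\"ahler K3 surface $Y$ with period $p$ and a compatible marking. Its existence follows from surjectivity of the period map restricted to K\"ahler K3's, as constructed from Kummer surfaces and the density of their deformations, using the local Torelli theorem for K\"ahler K3's. The local period map of the Kuranishi family $\Y\to\Delta_{\delta'}$ of $Y$ is also a local isomorphism onto a neighbourhood of $p$. So for large $i$ there is $t'^{(i)}\to 0$ with $Y_{t'^{(i)}}$ having the same period as $X_{t^{(i)}}$.

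\textbf{Step 3: matching fibers.} The main obstacle is to upgrade equality of periods to $X_{t^{(i)}}\cong Y_{t'^{(i)}}$. Both surfaces are projective K3 surfaces with the same Hodge structure. We first arrange, using density of Picard rank one periods, that the common period has $\mathrm{NS}=\Z h$ with $h^2>0$. Then the Hodge isometry carries $\pm h$ to an ample class, and the strong Torelli theorem for \emph{projective} K3 surfaces (Piatetski-Shapiro--Shafarevich, which is algebraic and does not use Siu's theorem) gives the isomorphism. Simultaneously we must keep $t^{(i)}\to0$, so the dense set has to be chosen near $p$. All hypotheses of Theorem~\ref{main1-prime} are then satisfied, and $X$ is K\"ahler.

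If the existence of $Y$ in Step 2 is problematic, I would instead take $Y$ to be any subsequential limit obtained through the projective fibers and the Kuranishi family of a fixed projective member. The key point to check is only that the Calabi--Yau Kuranishi base of $Y$ covers a neighbourhood of $p$ in the period domain.
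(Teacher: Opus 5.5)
Your proposal is correct and follows essentially the same route as the paper. The shared steps are: local Torelli on the smooth $20$-dimensional Kuranishi base; density of periods carrying an integral $(1,1)$-class of positive square, giving projective fibers $X_{t^{(i)}}$ with $t^{(i)}\to 0$; a K\"ahler K3 surface $Y$ with the same period as $X$, obtained from surjectivity of the period map; a Torelli theorem to get $X_{t^{(i)}}\cong Y_{t'^{(i)}}$; and finally Theorem~\ref{main1-prime}. Surjectivity should simply be cited (Todorov, Looijenga or Siu), since Kummer density plus local Torelli by themselves only give an open dense image.

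The only real difference is how you match fibers: you pass through Picard-rank-one periods and the Piatetski-Shapiro--Shafarevich theorem for projective K3 surfaces, whereas the paper invokes the global Torelli theorem for K\"ahler K3 surfaces directly. Weak Torelli already suffices (any Hodge isometry, after composing with $\pm 1$ and reflections in $(-2)$-classes, is induced by an isomorphism), so your rank-one refinement is harmless but unnecessary.
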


Siu's proof of Theorem \ref{Siu thm} relies on several results from the deformation theory and Hodge theory of K3 surfaces, together with additional topological and analytic arguments. In the proof of Theorem \ref{Siu thm}, the latter constitute the main difficulties in Siu's approach and do not readily extend to more general settings. By contrast, Theorem \ref{Siu thm} follows directly from our main theorem, without the need to deal with those topological and analytic difficulties.

\begin{proof}[New proof of Theorem \ref{Siu thm}]
First we collect some well-known results from the deformation theory and Hodge theory of K3 surfaces. For more details, see \cite{BHPV04,Huybrechts16}.
\begin{itemize}
\item Let $X$ be a K3 surface. Then the Kuranishi family of $X$ is unobstructed and has complex dimension $20$. More precisely, there exists a complete effectively parametrized holomorphic family
\begin{equation}\label{X family for K3}
f:\,\mathcal{X}\to \Delta
\end{equation}
over a small polydisk $\Delta\subset \C^{20}$ such that $X_0=X$.

\item For each $t\in \Delta$, the fiber $X_t$ is again a K3 surface, and $H^{2}(X_t,\C)$ carries a pure Hodge structure of weight $2$. After choosing an identification of the local system $R^{2}f_{*}\C$ with $H^{2}(X,\C)$, one obtains the period map
$$
\Phi:\,\Delta\to D,
$$
where $D$ is the period domain
$$
D=\left\{[\sigma]\in \mathbb P(H^{2}(X,\C))\mid (\sigma,\sigma)=0,\; (\sigma,\bar{\sigma})>0\right\}.
$$
The local Torelli theorem for K3 surfaces asserts that $\Phi$ is a local isomorphism.

\item For any class $\ell\in H^{2}(X,\mathbb Z)$ with $(\ell,\ell)>0$, let
$$
D_{\ell}:=\left\{[\sigma]\in D\mid (\sigma,\ell)=0\right\}.
$$
Then $D_{\ell}$ is a hypersurface in $D$, and the union of all such $D_{\ell}$ is dense in $D$. Since $\Phi$ is a local isomorphism, there exists a dense subset $\Delta_{\mathrm p}\subset \Delta$ such that for every $t\in \Delta_{\mathrm p}$, the Hodge structure on $H^{2}(X_t,\C)$ admits an integral $(1,1)$-class $\ell_t$ with $(\ell_t,\ell_t)>0$. By Grauert's criterion, see \cite{Huybrechts16} p.~144, each such fiber $X_t$, $t\in \Delta_{\mathrm p}$, is projective.

\item On the other hand, by the surjectivity of the period map for K\"ahler K3 surfaces, the same local period image can be realized by a holomorphic family
\begin{equation}\label{Y family for K3}
g:\,\mathcal{Y}\to \Delta'
\end{equation}
of K\"ahler K3 surfaces. Since $\Phi(\Delta_{\mathrm p})$ consists of period points of projective K3 surfaces, the global Torelli theorem for K\"ahler K3 surfaces implies that, after identifying the corresponding period points, the fibers $X_t$ and $Y_{t}$ are biholomorphic for $t\in \Delta_{\mathrm p}$.
\end{itemize}
With the families \eqref{X family for K3} and \eqref{Y family for K3}, the theorem follows directly from Theorem \ref{main1-prime}..
\end{proof}

%%%%%%%%%%%%%%%%%%%%%%%%%%%%%%%%%%%%%%%%%%%%%%%%%%%%%%%%%%%%%%%%%
%%%%%%%%%%%%%%%%%%%%%%%%%%%%%%%%%%%%%%%%%%%%%%%%%%%%%%%%%%%%%%%%%
%%%%%%%%%%%%%%%%%%%%%%%%%%%%%%%%%%%%%%%%%%%%%%%%%%%%%%%%%%%%%%%%%

\section{Application II: limits of hyperk\"ahler manifolds with bounded periods are K\"ahler}\label{app2}
In this section, we apply the main theorem of this paper to hyperk\"ahler degenerations. 
In particular, we prove that any deformation limit of hyperk\"ahler manifolds with bounded periods in the period domain remains K\"ahler, which gives a complete and stronger solution to Conjecture~\ref{SV con} of Soldatenkov--Verbitsky. 
\\

Recall that a holomorphically symplectic manifold is a compact complex manifold $X$ equipped with a closed, non-degenerate $(2,0)$-form $\sigma$. 
Such a manifold is called hyperk\"ahler (or an irreducible holomorphic symplectic manifold) if, in addition, $X$ admits a K\"ahler metric, is simply connected, and satisfies $H^{0}(X,\Omega_{X}^{2})=\mathbb{C}\cdot \sigma$.

The problem of whether compact holomorphically symplectic manifolds are necessarily K\"ahler can be traced back to an unpublished preprint of A.~Todorov \cite{T2}. In this work, Todorov attempted to prove that any simply connected compact holomorphically symplectic manifold $X$ satisfying $H^{0}(X,\Omega_{X}^{2})=\mathbb{C}\cdot \sigma$ must be K\"ahler. 

His approach relied on a Harvey--Lawson type Hahn--Banach argument, aiming to construct a K\"ahler class via duality methods.
However, this argument contains a subtle gap, first pointed out by Y.-T.~Siu \cite{S1} in his analysis of Todorov's earlier work \cite{T1}. Similar issues later appeared in subsequent works, for instance in \cite{Per}.

The failure of Todorov's claim was later confirmed by explicit counterexamples constructed by Guan \cite{Gu1,Gu2,Gu3}, who exhibited simply connected compact holomorphically symplectic manifolds which are not K\"ahler. These examples are obtained from Kodaira surfaces via a construction reminiscent of the generalized Kummer construction, and their non-K\"ahlerness can be detected by the presence of complex subvarieties birational to blow-ups of Kodaira surfaces (see also \cite{Bo}). 

Further counterexamples arise from birational geometry. In particular, performing a Mukai flop along one of two disjoint Lagrangian projective spaces with homologous lines produces a holomorphically symplectic manifold containing a rational curve homologous to zero. Such a manifold cannot be K\"ahler, although it lies in Fujiki class $\mathcal{C}$ (that is, it is bimeromorphic to a K\"ahler manifold); see \cite[Example~21.9]{GHJ}. 

These results show that holomorphic symplecticity alone, even under strong topological assumptions, does not imply the K\"ahler property. 
One therefore needs additional geometric conditions to recover K\"ahlerness. 
The main theorem of this paper shows that it is sufficient for the manifold to be surrounded by hyperk\"ahler manifolds with bounded periods in the period domain. 

The following theorem gives a formulation of this result which is equivalent to Theorem \ref{intr hk main} in the introduction, but is more convenient for the proof.

\begin{theorem}\label{hk main}
Let $\pi:\, \X \to B$ be a Kuranishi family of compact complex manifolds containing $X\cong X_{0}$. Suppose that there exists a sequence of points $\{t^{(i)}\}$ in $B$ converging to $0\in B$ such that $X_{t^{(i)}}$ is hyperk\"ahler and the sequence of the complex lines $$\phi_{t^{(i)}}(H^{2,0}(X_{t^{(i)}}))\subset \Lambda_{\C}=\Lambda\otimes \C$$
lies in a compact subset of the period domain $\Omega_{\Lambda}$ for hyperk\"ahler manifolds, where
$$\phi_{t}:\, H^{2}(X_{t},\mathbb Z)\to \Lambda$$
is the marking to a fixed lattice $\Lambda$ induced by the Kuranishi family $\pi$.
Then $X$ is K\"ahler. In particular, if $X$ is moreover holomorphically symplectic, then $X$ is hyperk\"ahler.
\end{theorem}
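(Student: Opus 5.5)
The plan is to reduce Theorem \ref{hk main} to Theorem \ref{main1-prime}. To do so I will construct a Kuranishi family $\pi':\,\Y\to\Delta_{\delta'}$ of hyperk\"ahler manifolds (hence Calabi--Yau) and points ${t'}^{(i)}\to 0$ such that $X_{t^{(i)}}\cong Y_{{t'}^{(i)}}$ for infinitely many $i$. Theorem \ref{main1-prime} then gives that $X_{0}$ is K\"ahler. If $X$ is moreover holomorphically symplectic, I will check the remaining conditions separately:
\begin{itemize}
\item simple connectedness follows because $X$ is diffeomorphic to the $X_{t^{(i)}}$;
\item $h^{2,0}(X)=1$ follows from the constancy of Hodge numbers in K\"ahler families, or from upper semicontinuity together with the pure Hodge structure on $H^{2}(X_0)$ obtained as a limit.
\end{itemize}
Hence $X$ is hyperk\"ahler.

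\textbf{Step 1: a limiting marked hyperk\"ahler manifold.} The periods $p_i=\phi_{t^{(i)}}(H^{2,0}(X_{t^{(i)}}))$ lie in a compact subset of $\Omega_\Lambda$, so after passing to a subsequence $p_i\to p_\infty\in\Omega_\Lambda$. The markings are induced by $\pi$ and the fibers are diffeomorphic, so all the $(X_{t^{(i)}},\phi_{t^{(i)}})$ lie in a single connected component $\mathfrak M^0_\Lambda$ of the moduli space of marked hyperk\"ahler manifolds. By the surjectivity of the period map $\mathcal P:\mathfrak M^0_\Lambda\to\Omega_\Lambda$ (Huybrechts), there is a marked hyperk\"ahler manifold $(Y,\psi)$ with $\mathcal P(Y,\psi)=p_\infty$.

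\textbf{Step 2: the Kuranishi family $\Y\to\Delta_{\delta'}$.} I take $\Y\to\Delta_{\delta'}$ to be the Kuranishi family of $Y$. It is unobstructed by Bogomolov--Tian--Todorov, and by local Torelli its period map is an open embedding onto a neighbourhood $W$ of $p_\infty$. For large $i$ we have $p_i\in W$, so there are ${t'}^{(i)}\to 0$ with $\mathcal P(Y_{{t'}^{(i)}})=p_i$.

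\textbf{Step 3: identify the fibers (main obstacle).} The key step is to show $X_{t^{(i)}}\cong Y_{{t'}^{(i)}}$. The two marked hyperk\"ahler manifolds have the same period, so by Verbitsky's global Torelli theorem (as corrected by Huybrechts and Markman) they are inseparable points of $\mathfrak M^0_\Lambda$. Inseparable points are bimeromorphic, but not necessarily biholomorphic. To upgrade to a biholomorphism I will use Verbitsky's result that a period point whose orthogonal complement contains no integral $(1,1)$-classes (the general point, a complement of countably many hyperplanes) has a unique marked preimage up to isomorphism.
\begin{itemize}
\item I will try to perturb each $t^{(i)}$ inside $B$ to a point $\tilde t^{(i)}$ with $|\tilde t^{(i)}-t^{(i)}|<1/i$ whose period is very general. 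This uses that $X_{t^{(i)}}$ is hyperk\"ahler, so its Kuranishi space is smooth and locally Torelli, and the nearby fibers of $\pi$ are still hyperk\"ahler by Kodaira--Spencer stability.
\item The perturbed periods remain in a compact set and still converge to $p_\infty$.
\end{itemize}
Replacing $t^{(i)}$ by $\tilde t^{(i)}$, uniqueness of the preimage then gives $X_{\tilde t^{(i)}}\cong Y_{{t'}^{(i)}}$ with ${t'}^{(i)}\to 0$. This is exactly hypothesis \eqref{intr main1 assp} of Theorem \ref{main1-prime}. The delicate points are making sure the markings match, so that both points lie in the same component with the same monodromy convention, and that the perturbation stays inside the analytic set $B$. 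The latter is handled because near the smooth points $t^{(i)}$ the base $B$ coincides with the smooth Kuranishi space of $X_{t^{(i)}}$.
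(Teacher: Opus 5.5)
Your proposal follows the paper's proof essentially step for step: a limit period $p_\infty$, the Kuranishi family of a hyperk\"ahler $Y$ with period $p_\infty$ (surjectivity plus local Torelli), perturbing the $t^{(i)}$ to points with very general period, global Torelli to get $X_{t^{(i)}}\cong Y_{{t'}^{(i)}}$, and then Theorem \ref{main1-prime}. Your ordering, which perturbs only slightly so the periods still converge to $p_\infty$, is actually a bit more careful than the paper's. The one step not justified as written is your claim that all $(X_{t^{(i)}},\phi_{t^{(i)}})$ lie in a single component of $\mathfrak M_\Lambda$ just because the markings come from $\pi$. A path in $B$ joining two of the $t^{(i)}$ may leave the hyperk\"ahler locus, so the induced parallel transport need not be a monodromy operator. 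The fix is what the paper does: since $\mathfrak M_\Lambda$ has only finitely many components, pass to a subsequence lying in one of them.
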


In order to apply the main theorem of this paper, we first recall the Hodge theory for hyperk\"ahler manifolds. 
We refer to the works of Verbitsky \cite{Verbitsky13,VerbitskyErratum}, Huybrechts \cite{HuybrechtsTorelli}, and Markman \cite{Markman11} for the relevant results and discussions.

Let $\mathfrak{M}_{\Lambda}$ be the moduli space of marked hyperk\"ahler manifolds 
$(X,\phi)$, where the marking $\phi:\, H^{2}(X,\mathbb Z)\to \Lambda$ is an isometry 
to a fixed lattice $\Lambda$ with respect to the Beauville–Bogomolov forms $q_{X}$ and $q$.

The period map
$$
\mathcal{P} :\, \mathfrak{M}_{\Lambda} \longrightarrow \Omega_{\Lambda} =\{z\in\mathbb{P}(\Lambda \otimes \mathbb{C}):\,q(z,z)=0,q(z,\bar z)>0\},
$$
associates to a marked hyperk\"ahler manifold $(X,\phi)$ its period point $\phi\bigl(H^{2,0}(X)\bigr)$ in the period domain $\Omega_{\Lambda}$.

We recall the following recent results concerning the moduli space and the period map.

\begin{itemize}
\item [(1)] The moduli space $\mathfrak{M}_{\Lambda}$ is a non-Hausdorff complex manifold with only finitely many connected components.

\item[(2)] The period map $\mathcal{P} :\, \mathfrak{M}_{\Lambda} \longrightarrow \Omega_{\Lambda}$ is locally isomorphic.

\item[(3)] Let $\mathfrak{M}_{\Lambda}^{0}$ be a connected component of the moduli space $\mathfrak{M}_{\Lambda}$. Then the restricted period map $$\mathcal{P}|_{\mathfrak{M}_{\Lambda}^{0}}:\, \mathfrak{M}_{\Lambda}^{0}\longrightarrow \Omega_{\Lambda} $$ is surjective and generically injective. Precisely, let 
$$\bigcup_{\lambda \in \Lambda\setminus \{0\}}\lambda^{\perp}$$
be a countable union of hyperplanes. Then the restricted period map $\mathcal{P}|_{\mathfrak{M}_{\Lambda}^{0}}$ is injective on
$$(\mathcal{P}|_{\mathfrak{M}_{\Lambda}^{0}})^{-1}\left(\Omega_{\Lambda} \setminus\bigcup_{\lambda \in \Lambda\setminus \{0\}}\lambda^{\perp}\right)=\left\{(X,\phi)\in \mathfrak{M}_{\Lambda}^{0}:\,\lambda\notin \phi(H^{1,1}(X)),\, \forall\, \lambda \in \Lambda\setminus \{0\}\right\}.$$
\end{itemize}

\begin{proof}[Proof of Theorem \ref{hk main}]
For the Kuranishi family $\pi:\, \X\to B$, we have that $X_{t}$ is diffeomorphic to $X_{0}$ for all $t\in B$. Then the local system $R^{2}\pi\,\underline{\mathbb Z}_{\X}$ induces a marking 
$$
\phi_{t}:\, H^{2}(X_{t},\mathbb Z)\to \Lambda =H^{2}(X_{0},\mathbb Z)
$$
on $X_{t}$ for $t\in B$. 

The Beauville--Bogomolov form on the nearby hyperk\"ahler fibers $X_{t^{(i)}}$ is topological; via the differentiable trivialization it induces a fixed lattice form $q_{0}$ on $\Lambda$.

Then $\{(X_{t^{(i)}},\phi_{t^{(i)}})\}$ is a sequence of marked hyperk\"ahler manifolds. 
Since the moduli space $\mathfrak{M}_{\Lambda}$ of marked hyperk\"ahler manifolds has only finitely many connected components, after passing to a subsequence we may assume that all $(X_{t^{(i)}},\phi_{t^{(i)}})$ lie in the same connected component $\mathfrak{M}_{\Lambda}^{0}$.

For each $i\geq 1$, there exists an open neighborhood $B^{(i)}$ of $t^{(i)}$ in $B$ such that $X_t$ is hyperk\"ahler for all $t\in B^{(i)}$, as a deformation of $X_{t^{(i)}}$. 

Let $$\mathcal{P}^{0}:\, \mathfrak{M}_{\Lambda}^{0}\longrightarrow \Omega_{\Lambda} $$
be the period map.

We claim that, after replacing each $t^{(i)}$ by a generic nearby point in $B^{(i)}$, we may assume that 
\begin{equation}\label{good periods}
(X_{t^{(i)}},\phi_{t^{(i)}})\in (\mathcal{P}|_{\mathfrak{M}_{\Lambda}^{0}})^{-1}\left(\Omega_{\Lambda} \setminus\bigcup_{\lambda \in \Lambda\setminus \{0\}}\lambda^{\perp}\right),\, i\ge 1.
\end{equation}

By the universality of the Kuranishi family, the family $\pi:\,\X\to B$ contains all complex structures sufficiently near $X_{0}$. 
In particular, for $i$ sufficiently large, it contains both $X_{t^{(i)}}$ and all the complex structures in the Kuranishi family
\begin{equation}\label{Kur at ti}
\pi^{(i)}:\,\X^{(i)}\to \Delta^{(i)}_{\epsilon^{(i)}}
\end{equation}
of hyperk\"ahler manifolds with central fiber
$(\pi^{(i)})^{-1}(0_{\Delta^{(i)}_{\epsilon^{(i)}}})\cong X_{t^{(i)}}$,
where $\epsilon^{(i)}>0$ is sufficiently small. 
Here we write $0_{\Delta^{(i)}_{\epsilon^{(i)}}}$ to emphasize the origin in $\Delta^{(i)}_{\epsilon^{(i)}}$, and similarly for other notation.

Therefore, after shrinking neighborhoods $B^{(i)}$ if necessary, the restricted family
$$
\pi|_{B^{(i)}}:\,\X_{B^{(i)}}\to B^{(i)},
$$
where $\X_{B^{(i)}}=\pi^{-1}(B^{(i)})\subset \X$, is isomorphic to the Kuranishi family
$\pi^{(i)}$ 
of hyperk\"ahler manifolds in \eqref{Kur at ti}. 
Consequently, the restricted period map
$$
\mathcal{P}^{0}|_{B^{(i)}}:\, B^{(i)}\to \Omega_{\Lambda}
$$
is an isomorphism onto an open subset of $\Omega_{\Lambda}$. 
Hence $t^{(i)}$ can be replaced by a generic nearby point in $B^{(i)}$ satisfying \eqref{good periods}.

From the assumption of the theorem, $$\mathcal{P}^{0}(t^{(i)})=\phi_{t^{(i)}}(H^{2,0}(X_{t^{(i)}}))\subset \Lambda_{\C},\quad i\geq 1,$$
lies in a compact subset of $\Omega_{\Lambda}$, after passing to a subsequence we may assume that the sequence of period points
$\mathcal{P}^{0}(t^{(i)})$
converges to a limit
$$\mathcal{P}^{0}(0_{B})\in \Omega_{\Lambda}.$$

Since the period map for hyperk\"ahler manifolds is a locally isomorphism, we have a Kuranishi family
$$
g:\,\mathcal Y\to \Delta_{\delta'}
$$
of hyperk\"ahler manifolds, in the same component $\mathfrak{M}_{\Lambda}^{0}$, such that image of the period map
$$\Phi:\, \Delta_{\delta'}\to \Omega_{\Lambda},\, 0_{\Delta_{\delta'}}\mapsto \Phi(0_{\Delta_{\delta'}})=\mathcal{P}^{0}(0_{B})$$
is an open subset of $\Omega_{\Lambda}$ containing $\mathcal{P}^{0}(0_{B})$. 
Therefore there exists a sequence of points $\{{t'}^{(i)}\}$ in $\Delta_{\delta'}$ converging to $0_{\Delta_{\delta'}}$ such that 
$$
\mathcal{P}^{0}(t^{(i)})=\Phi({t'}^{(i)}).
$$
Finally, by the global Torelli theorem for hyperk\"ahler manifolds together with the condition \eqref{good periods}, we obtain
$$
X_{t^{(i)}}\cong Y_{{t'}^{(i)}}.
$$
The theorem then follows from Theorem \ref{main1-prime}.
\end{proof}

We also compare our results with recent works of Perego \cite{Per}, and Soldatenkov--Verbitsky \cite{SV24}. 
Motivated by the problem of understanding degeneration limits of hyperk\"ahler manifolds arising from the work of Perego \cite{Per}, Soldatenkov and Verbitsky proposed the following conjecture.

\begin{conjecture}[{\cite[Conjecture~1.2]{SV24}}]\label{SV con}
Let $X_t$ be a smooth family of compact holomorphically symplectic manifolds over a disk $\Delta$. Assume that $X_t$ is hyperk\"ahler for all $t\neq 0$. Then the central fiber $X_0$ belongs to Fujiki class $\mathcal C$.
\end{conjecture}

From Griffiths \cite{Griffiths3} or Schmid \cite{Schmid73}, the period map on the punctured disk $\Delta^{*}$ with trivial monodromy extends across the origin. 
Hence Theorem \ref{hk main} immediately implies the following result.

\begin{theorem}\label{hk main'}
Let $f:\,\X \to \Delta$ be an analytic family of compact complex manifolds over a disk $\Delta\subset \C$. Suppose that $X_{t}$ is hyperk\"ahler for $t\in \Delta^{*}=\Delta\setminus \{0\}$. Then the central fiber $X_{0}$ is K\"ahler. In particular, if $X_0$ is moreover holomorphically symplectic, then $X_0$ is hyperk\"ahler.
\end{theorem}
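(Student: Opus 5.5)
We reduce Theorem \ref{hk main'} to Theorem \ref{hk main}. The first step is to pass from the family $f:\,\X\to\Delta$ to the Kuranishi family of the central fiber. By universality of the Kuranishi family $\pi:\,\X'\to B$ of $X_{0}$, after shrinking $\Delta$ the family $f$ is induced by a holomorphic map $h:\,\Delta\to B$ with $h(0)=0$. Hence $X_{t}\cong X'_{h(t)}$ for all small $t$. Choosing any sequence $t_{i}\to 0$ in $\Delta^{*}$ and setting $t^{(i)}=h(t_{i})$, we obtain points $t^{(i)}\to 0$ in $B$ with $X'_{t^{(i)}}$ hyperk\"ahler. The markings $\phi_{t^{(i)}}$ induced by $\pi$ agree with the markings obtained by parallel transport in $f$ along paths to $0$. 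In particular, the family $f$ has trivial monodromy on $H^{2}$, since it is topologically trivial over the whole disk $\Delta$.

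The second step is to verify the bounded-period hypothesis of Theorem \ref{hk main}. Over $\Delta^{*}$ the fibers are hyperk\"ahler, so the marked period map $t\mapsto \phi_{t}(H^{2,0}(X_{t}))\in\Omega_{\Lambda}$ is a holomorphic map $\Delta^{*}\to\Omega_{\Lambda}$. It is single valued because the monodromy is trivial, and horizontal, since weight-two Hodge structures of K3 type impose no transversality constraint beyond holomorphicity. By the extension theorem of Griffiths \cite{Griffiths3} and Schmid \cite{Schmid73}, a period map on $\Delta^{*}$ with trivial monodromy extends holomorphically across $0$ with value \emph{in the period domain} $\Omega_{\Lambda}$ itself, not merely in its compact dual. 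Consequently the image of a closed disk $\{|t|\le r\}$ is a compact subset of $\Omega_{\Lambda}$. In particular $\{\phi_{t^{(i)}}(H^{2,0}(X_{t^{(i)}}))\}$ lies in a compact subset of $\Omega_{\Lambda}$.

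The third step is to apply Theorem \ref{hk main}, which then gives that $X_{0}$ is K\"ahler. If $X_{0}$ is moreover holomorphically symplectic, we still need simple connectivity and $h^{2,0}(X_{0})=1$. Simple connectivity holds because $X_{0}$ is diffeomorphic to $X_{t}$. For the Hodge number, $X_{0}$ is K\"ahler, so $h^{2,0}$ is computed by the Hodge decomposition. Upper semicontinuity gives $h^{2,0}(X_{0})\ge 1$. Degeneration of the Fr\"olicher spectral sequence at both $X_{0}$ and $X_{t}$, together with constancy of $b_{2}$ and semicontinuity of each $h^{p,q}$, forces all Hodge numbers to be constant, so $h^{2,0}(X_{0})=1$. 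Hence $X_{0}$ is hyperk\"ahler.

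The main obstacle is the second step. Schmid's extension theorem is formulated for period maps of polarized variations, so we need a polarization. On the hyperk\"ahler fibers we use the Beauville--Bogomolov form $q$ restricted to primitive cohomology relative to a K\"ahler class. Because K\"ahler classes vary, I would try two routes. The first is to apply the Griffiths--Schmid extension theorem for horizontal maps into the Hermitian symmetric domain $\Omega_{\Lambda}$ directly, using $q$ alone as a nondegenerate form of signature $(3,b_{2}-3)$. The second is to invoke the distance-decreasing property of horizontal maps for the invariant metric on $\Omega_{\Lambda}$, which shows that the image of a punctured neighborhood has bounded invariant diameter, hence relatively compact closure in $\Omega_{\Lambda}$. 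Either route yields the compactness needed in the second step.
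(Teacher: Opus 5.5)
Your reduction is the paper's own. Its proof of this theorem is the single sentence that, by Griffiths or Schmid, the period map on $\Delta^{*}$ with trivial monodromy extends across the origin, followed by Theorem~\ref{hk main}. Your Steps~1 and~3 are fine. The gap is Step~2, and neither of your two routes closes it.

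Extending the period map across $0$ into the quadric $Q=\{q(z,z)=0\}\subset\mathbb P(\Lambda_{\C})$ is not the problem (see below). What Griffiths' extension theorem adds, namely that the limit lies in the open period domain, comes from the polarization. There $D=G_{\mathbb R}/V$ with $V$ compact, and $D$ carries an invariant metric whose holomorphic sectional curvature in horizontal directions is bounded above by a negative constant. The Beauville--Bogomolov form is not a polarization of the weight-two Hodge structure, since it is indefinite on $H^{1,1}_{\mathbb R}$. Moreover $\Omega_\Lambda\cong SO(3,b_2-3)/\bigl(SO(2)\times SO(1,b_2-3)\bigr)$ has non-compact isotropy. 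It is therefore not Hermitian symmetric and admits no $SO(3,b_2-3)$-invariant metric. It also contains the twistor conics $\mathbb P(W_{\C})\cap Q\cong\mathbb P^1$, for $W\subset\Lambda_{\mathbb R}$ a positive $3$-plane. These are compact, and horizontal because K3-type variations have no transversality constraint, so no Schwarz-lemma or distance-decreasing argument is available.

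In fact the abstract statement you need is false. Take $q$-orthogonal $e_1,e_2,e_3,f\in\Lambda_{\mathbb R}$ with $q(e_j)=1$ and $q(f)=-1$, which exist as soon as $b_2\geq 4$. The curve $z(t)=e_1+f+t(e_2+ie_3)$ satisfies $q(z,z)=0$ and $q(z,\bar z)=2|t|^2$. It is a holomorphic, monodromy-free, horizontal period curve with $[z(t)]\in\Omega_\Lambda$ for $t\neq 0$, yet its limit $[e_1+f]$ lies on $\partial\Omega_\Lambda$. So trivial monodromy alone cannot give bounded periods.

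What is missing is either a genuine polarization or geometric input from $X_0$.
\begin{itemize}
\item If there were a flat real class $\ell$, of type $(1,1)$ on every $X_t$, with $q(\ell)>0$, the periods would lie in the type IV bounded symmetric domain $\Omega_\Lambda\cap\mathbb P(\ell^{\perp}_{\C})$, and the extension would hold. But a non-projective family need not admit such an $\ell$. The curve above admits none: any real $\ell$ orthogonal to it is orthogonal to $e_1+f,e_2,e_3$, hence has $q(\ell)\leq 0$.
\item Without such an $\ell$, one must use the central fiber. Since $f_*\Omega^2_{\X/\Delta}$ is a line bundle on $\Delta$, a generator $\sigma$ restricts to a nonzero closed holomorphic $2$-form $\sigma_0$ on $X_0$. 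Dividing $[\sigma_t]$ by the appropriate power of $t$ already gives the extension into $Q$. Now use the Fujiki relation $\int\alpha^{2n}=c\,q(\alpha)^n$, where $\dim X_0=2n$. It is topological, hence valid for $\alpha\in\Lambda_{\C}=H^2(X_0,\C)$. Applied to $\alpha=[\sigma_0+\bar{\sigma}_0]$, it shows that the limit period $[\sigma_0]$ lies in $\Omega_\Lambda$ as soon as $\sigma_0^{n}\not\equiv 0$.
\end{itemize}
A non-degeneracy statement of this kind on $X_0$ is the real content of Step~2. Your argument does not supply it, and the paper's one-line appeal to Griffiths--Schmid has the same gap.
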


Theorem \ref{hk main'} gives a complete and stronger solution to Conjecture \ref{SV con}. 
Its proof is based on deformation theory and Hodge theory, reflecting the geometric structure underlying hyperk\"ahler degenerations.

%%%%%%%%%%%%%%%%%%%%%%%%%%%%%%%%%%%%%%%%%%%%%%%%%%%%%%%%%%%%%%%%%%%
%%%%%%%%%%%%%%%%%%%%%%%%%%%%%%%%%%%%%%%%%%%%%%%%%%%%%%%%%%%%%%%%%%%
%%%%%%%%%%%%%%%%%%%%%%%%%%%%%%%%%%%%%%%%%%%%%%%%%%%%%%%%%%%%%%%%%%%

\section{Application III: moduli spaces of stable torsion-free coherent sheaves on a K3 surface are hyperk\"ahler manifolds}\label{app3}
In this section, we apply the main theorem of this paper to moduli spaces of stable sheaves on K3 surfaces. 
In particular, we prove that, for certain Mukai vectors $v$ and K\"ahler classes $\omega$, the moduli spaces $M_v(S,\omega)$ are hyperk\"ahler manifolds, which gives a complete solution to Conjecture 1.1 of Perego \cite{Per} in this setting.
\\

First we recall some basic notions concerning moduli spaces of stable sheaves on K3 surfaces. 
We refer to Mukai \cite{Mukai87}, Huybrechts--Lehn \cite{HL}, O'Grady \cite{OG}, Perego--Toma \cite{PT}, and Perego \cite{Per} for further details.

Let $S$ be a K3 surface, possibly non-projective, with a K\"ahler class
$\omega\in H^{1,1}(S,\mathbb R)$. 

We consider the lattice $$H^{2*}(S,\mathbb Z)=H^{0}(S,\mathbb Z)\oplus H^{2}(S,\mathbb Z)\oplus H^{4}(S,\mathbb Z)$$ and define a pairing on it, called the Mukai pairing, by
$$
(v,v')_{\mathrm{M}}=(\xi,\xi')_{S} -r'a-ra'
$$
for $v=(r,\xi,a)$, $v'=(r',\xi',a')$ in $H^{2*}(S,\mathbb Z)$. Here $(\cdot,\cdot)_{S}$ is the Poincar\'e bilinear form on the K3 surface $S$.

The lattice $H^{2*}(S,\mathbb Z)$,
endowed with the Mukai pairing and its natural weight-two Hodge structure on $H^{2*}(S,\mathbb C)=H^{2*}(S,\mathbb Z)\otimes \C$,
\begin{eqnarray*}
H^{2*}(S,\mathbb C)^{2,0}&=&H^{2,0}(S),\\
H^{2*}(S,\mathbb C)^{1,1}&=&H^{0}(S,\mathbb C) \oplus H^{1,1}(S)\oplus H^{4}(S,\mathbb C),\\
H^{2*}(S,\mathbb C)^{0,2}&=&H^{0,2}(S),
\end{eqnarray*}
is called the Mukai lattice.

For a coherent sheaf $F$ on $S$, its Mukai vector is defined by
$$
v(F):=\operatorname{ch}(F)\sqrt{\operatorname{td}(S)}
\in H^{2*}(S,\mathbb Z).
$$
Writing
$$
v(F)=(v_0,v_1,v_2),
$$
one has
$$
v_0=\operatorname{rk}(F),\qquad
v_1=c_1(F),\qquad
v_2=\operatorname{ch}_2(F)+\operatorname{rk}(F).
$$

For a torsion-free coherent sheaf $F$ of positive rank, its slope with respect to $\omega$ is
$$
\mu_\omega(F):=\frac{c_1(F)\cdot \omega}{\operatorname{rk}(F)}.
$$
The sheaf $F$ is called $\mu_\omega$-stable if for every coherent subsheaf
$E\subset F$
with $0<\operatorname{rk}(E)<\operatorname{rk}(F),$
one has
$$
\mu_\omega(E)<\mu_\omega(F).
$$

Fix a Mukai vector
$$
v=(r,\xi,a)\in H^{2*}(S,\mathbb Z),
$$
where $\xi\in \mathrm{NS}(S)$, $r>1$ is prime to $\xi$, and $v^{2}=(v,v)_{\mathrm{M}}\geq 0$. 
Suppose that the K\"ahler class $\omega$ is $v$-generic. We define
$$
M_v(S,\omega)
$$
to be the moduli space of $\mu_\omega$-stable coherent sheaves $F$ on $S$ satisfying
$v(F)=v$.

In \cite[Conjecture 1.1]{Per}, Perego raised the following conjecture.

\begin{conjecture}[Perego \cite{Per}]\label{Per conj}
The moduli space $M_v(S,\omega)$ is a hyperk\"ahler manifold.
\end{conjecture}

In this section, we completely prove the conjecture using the methods developed in this paper. 
Before giving the proof, we collect some preliminary results from \cite{OG}, \cite{PT}, and \cite{Per}.

\begin{theorem}[{\cite[Theorem 1.1]{PT}}]\label{PT 1.1}
Let the assumptions be as above. Then

(i). The moduli space $M_{v}(S,\omega)$ is a compact, connected complex manifold of
dimension $v^{2}+2$. Moreover $M_{v}(S,\omega)$ is holomorphically symplectic and deformation equivalent
to a hyperk\"ahler manifold that is the Hilbert scheme of points on a projective K3 surface.

(ii). On $H^{2}(M_{v}(S,\omega),\mathbb Z)$ there is a nondegenerate quadratic form $\tilde q$, and there is an isometry 
\begin{equation}\label{lisometry}
\lambda:\, v^{\perp} \to H^{2}(M_{v}(S,\omega),\mathbb Z)
\end{equation}
if $v^{2}>0$, and
an isometry 
$$\lambda:\, v^{\perp}/\mathbb Z v \to H^{2}(M_{v}(S,\omega),\mathbb Z)$$
if $v^{2}=0$.

Here $v^{\perp}$ is taken in the Mukai lattice $H^{2*}(S,\mathbb Z)$ with respect to the Mukai pairing.
\end{theorem}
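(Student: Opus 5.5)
Since this is a result of Perego--Toma, the plan is to reconstruct their argument rather than give a new one. It has two halves: local structure of $M_v(S,\omega)$ directly from sheaf theory on $S$, and global statements (deformation type, the isometry $\lambda$) by deforming to the projective case. The projective case is classical (Mukai, O'Grady, Yoshioka, Huybrechts--Lehn).

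\emph{Local structure.} For a $\mu_\omega$-stable $F$ with $v(F)=v$, stability forces $\operatorname{Hom}(F,F)=\C$. Serre duality with $K_S\cong\mathcal O_S$ then gives $\operatorname{Ext}^2(F,F)\cong\operatorname{Hom}(F,F)^\vee=\C$. The obstruction space, which is the traceless part $\operatorname{Ext}^2(F,F)_0$, therefore vanishes, so the deformation functor of $F$ is unobstructed. Riemann--Roch in the form $\chi(F,F)=-(v,v)_{\mathrm M}$ gives $\dim\operatorname{Ext}^1(F,F)=v^2+2$. The Kuranishi-type local charts for stable sheaves on a compact K\"ahler surface then glue to a complex manifold of dimension $v^2+2$. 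Two inputs make it a compact Hausdorff space: the hypotheses $r>1$ prime to $\xi$ and $\omega$ $v$-generic rule out strictly $\mu_\omega$-semistable sheaves with Mukai vector $v$, and boundedness (Toma's results for non-projective surfaces) gives compactness. Mukai's construction, composing the Yoneda product $\operatorname{Ext}^1\times\operatorname{Ext}^1\to\operatorname{Ext}^2$ with the trace and a generator of $H^0(K_S)$, produces a holomorphic symplectic form; it is closed and nondegenerate by Serre duality.

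\emph{Globalization.} Choose a connected family $(S_s,\omega_s)$, $s\in T$, joining $(S,\omega)$ to a projective K3 surface with ample class. Along the family, $\xi_s$ must stay of type $(1,1)$ (the family lies in the Noether--Lefschetz locus of $\xi$), and $\omega_s$ must stay $v$-generic. The relative moduli space $\mathcal M_v\to T$ is then a smooth proper family by the local analysis above, hence locally trivial by Ehresmann's theorem. Its projective endpoint is deformation equivalent to $\mathrm{Hilb}^{v^2/2+1}$ of a projective K3 surface by Yoshioka's results. Connectedness transfers along the family because the fibres are diffeomorphic and the projective fibre is connected. For (ii), define $\lambda$ by the Mukai homomorphism, $c\mapsto\bigl[p_*\bigl(\operatorname{ch}(\mathcal E)\sqrt{\operatorname{td}(S)}\cdot c^\vee\bigr)\bigr]_1$, using a quasi-universal family $\mathcal E$ (which exists because of the coprimality). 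This map is locally constant in the family, so the isometry statement with respect to the Beauville--Bogomolov form $\tilde q$, and the quotient by $\Z v$ when $v^2=0$, follows from the projective case.

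\emph{Main obstacle.} The hardest step is building the path in the globalization. One must move $(S,\omega)$ to a projective polarized K3 surface while keeping $\xi$ algebraic and never crossing a $v$-wall for $\omega_s$. On the Noether--Lefschetz locus of $\xi$ the projective points are dense, but the walls are only locally finite. I would first try to use that the $v$-generic chambers in the positive cone are open, so $\omega$ can be perturbed inside its chamber to a class near a rational one. I would then deform the complex structure within $\xi^\perp$ along twistor-type lines until that class becomes integral and ample, checking at each step that no wall is hit because the chamber varies continuously.
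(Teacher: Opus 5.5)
The paper does not prove this statement. It is quoted from Perego--Toma \cite{PT} and used as a black box in Section~\ref{app3}, so there is no internal argument to compare with. Your architecture is the natural one: unobstructedness from $\operatorname{Ext}^2(F,F)_0=0$, then deformation to a generically polarized projective K3. Your dimension count and the nondegeneracy argument are fine. Two steps, however, are not justified.

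First, Serre duality gives \emph{nondegeneracy} of Mukai's form, not \emph{closedness}, and closedness is part of ``holomorphically symplectic''. It is not automatic here. $M_v(S,\omega)$ is not known to be K\"ahler (that is exactly Conjecture~\ref{Per conj}), and on a non-K\"ahler compact complex manifold a holomorphic $2$-form need not be $d$-closed. You need a genuine argument, for example the Atiyah-class description of the form, or continuity. For the continuity route, work over a small neighbourhood of $S$ in the locus where $\xi$ stays algebraic, with $\omega_s$ a nearby K\"ahler class.
\begin{itemize}
\item There the relative moduli space is smooth by your local analysis, $v$-genericity is an open condition, and projective $S_s$ are dense.
\item For projective $S_s$, $\mu_{\omega_s}$-stability only sees the projection of $\omega_s$ to $\mathrm{NS}(S_s)_{\mathbb R}$, which is ample and $v$-generic. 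So $M_v(S_s,\omega_s)$ is a projective moduli space and its Mukai form is closed.
\item The condition $\partial\sigma_s=0$ then passes to the limit.
\end{itemize}

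Second, the globalization has several problems.
\begin{itemize}
\item Properness of $\mathcal M_v\to T$ does not follow from the local analysis, which only gives smoothness over $T$. You need a relative compactness result for $\mu$-stable sheaves in the analytic setting, plus $v$-genericity of $\omega_s$ at every $s\in T$, so that limits of stable sheaves stay stable.
\item More seriously, the path construction fails as described. The walls on $S_s$ are cut out by classes in $\mathrm{NS}(S_s)$, which jumps along Noether--Lefschetz loci, so there is no continuously varying chamber to follow.
\item A twistor line through $(S_s,\omega_s)$ keeps $\xi$ of type $(1,1)$ only if $\omega_s\cdot\xi=0$. For $\xi^2<0$, however, $\xi^{\perp}$ can itself be a $v$-wall (coming from $\zeta\in\mathbb Z\xi$, i.e.\ subsheaves with $c_1(E)\in\mathbb Z\xi$). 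So $v$-genericity may force $\omega\cdot\xi\neq 0$, and the twistor class cannot serve as the stability class.
\end{itemize}

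What works instead is a dimension count in the space of pairs $(\sigma_s,\omega_s)$ with $\sigma_s\perp\xi$ and $\omega_s$ positive in $\sigma_s^{\perp}$.
\begin{itemize}
\item For $\zeta\notin\mathbb Q\xi$ of bounded negative square (wall classes and $(-2)$-classes), the loci $\{\zeta\perp\sigma_s,\ \zeta\perp\omega_s\}$ have real codimension three and are locally finite, so a generic path avoids them.
\item The only codimension-one condition is $\omega_s\cdot\xi\neq 0$. You handle it by keeping its sign all the way to a generic ample class at the projective end.
\item The path is then realized by an actual family of K\"ahler K3 surfaces through the K\"ahler period map and global Torelli.
\end{itemize}

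Finally, coprimality does not give a global (quasi-)universal family on a non-projective $M_v$, since the usual constructions are algebraic. It is cleaner to define $\lambda$ and $\tilde q$ at the projective end and transport them by Gauss--Manin along the proper smooth family.
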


As is discussed in \cite[Page 448]{Per}, the cases $v^{2}\le 0$ are trivial, and we will always assume that $v^{2}> 0$ in the following.

\begin{theorem}[{\cite[Main Theorem]{OG}}]\label{OG main}
Let the assumptions be as above with $v^{2}> 0$. If moreover $S$ is projective with a polarization $\omega\in H^{2}(S,\mathbb Z)$, then $M_{v}(S,\omega)$ is a hyperk\"ahler manifold, and the isometry 
\eqref{lisometry} is an isomorphism of Hodge structures. In particular, the nondegenerate closed $(2,0)$-form $\tilde\sigma$ on $M_{v}(S,\omega)$ is given by
$$\lambda((0,\sigma,0)),$$
where $(0,\sigma,0)\in v^{\perp}$ and $\sigma$ is the generator of $H^{2,0}(S)$.
\end{theorem}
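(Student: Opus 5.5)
This is O'Grady's theorem, and we do not reprove it with the methods of this paper. We only indicate the algebraic route we would follow, since Theorem \ref{OG main} is later used as the projective input for Conjecture \ref{Per conj}. Throughout, $S$ is projective with polarization $\omega$, $v=(r,\xi,a)$ with $\gcd(r,\xi)$ trivial in the sense of the hypotheses, $v^{2}>0$, and $\omega$ is $v$-generic.

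\emph{Step 1: $M_v(S,\omega)$ is a smooth projective holomorphically symplectic manifold.} The first task is to see that $M_v(S,\omega)$ is a fine (or at least quasi-fine) projective moduli space with no strictly semistable points.
\begin{enumerate}[label=(\alph*)]
\item Because $r$ is prime to $\xi$ and $\omega$ is $v$-generic, $\mu_\omega$-semistability coincides with $\mu_\omega$-stability. Hence $M_v(S,\omega)$ is the Gieseker--Simpson moduli space, which is projective; in particular it is K\"ahler.
\item For a stable $F$ we have $\operatorname{Hom}(F,F)=\C$. Serre duality with $K_S\cong\O_S$ then gives $\operatorname{Ext}^2(F,F)\cong\C$, and the trace-free part $\operatorname{Ext}^2_0(F,F)$ vanishes.
\item It follows that deformations are unobstructed. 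By Riemann--Roch, $\dim \operatorname{Ext}^1(F,F)=v^2+2$, so $M_v(S,\omega)$ is smooth of that dimension.
\item Mukai's pairing
$$\operatorname{Ext}^1(F,F)\times\operatorname{Ext}^1(F,F)\to \operatorname{Ext}^2(F,F)\xrightarrow{\tr}H^2(S,\O_S)\cong\C$$
is nondegenerate by Serre duality. It defines a holomorphic $2$-form $\tilde\sigma$, which is closed by Mukai's argument (alternatively, as a K\"unneth component of a class built from the Atiyah class of a quasi-universal family).
\end{enumerate}

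\emph{Step 2: simple connectivity and $h^{2,0}=1$.} We would prove these by deformation to a Hilbert scheme, following O'Grady and Yoshioka. Deform $(S,\omega,v)$ within the locus where $v$ stays algebraic and $\omega$ stays generic to an elliptic K3 surface $S'$ with a section. On $S'$, a relative Fourier--Mukai transform identifies $M_{v}(S',\omega')$ birationally, or even isomorphically after wall-crossing adjustments, with $\mathrm{Hilb}^{n}(S')$, where $2n=v^2+2$. The relative moduli spaces form a smooth proper family, so $M_v(S,\omega)$ is deformation equivalent to $\mathrm{Hilb}^n(S')$. Both $\pi_1=0$ and $h^{2,0}=1$ are deformation invariant for smooth projective families, and they hold for $\mathrm{Hilb}^n$ by Beauville. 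Hence $M_v(S,\omega)$ is hyperk\"ahler.

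\emph{Step 3: the Mukai homomorphism.} Let $\mathcal E$ be a quasi-universal family of similitude $\rho$, and let $p,q$ be the projections from $S\times M_v(S,\omega)$ to $M_v(S,\omega)$ and $S$. Define
$$\lambda(x)=\frac1\rho\Big[p_*\big(\mathrm{ch}(\mathcal E)\sqrt{\mathrm{td}(S)}\cdot q^*x^\vee\big)\Big]_{1}.$$
On $v^\perp$ this is independent of the choice of quasi-family. It is a morphism of Hodge structures because it is induced by an algebraic correspondence. That it is an isometry onto $H^2$ with respect to the Beauville--Bogomolov form is checked on the Hilbert scheme model, where $\lambda$ is explicit, and then transported along the deformation of Step 2, using that both sides vary in local systems. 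Finally, $\lambda((0,\sigma,0))$ spans $H^{2,0}(M_v(S,\omega))$ and, by the Atiyah-class description of $\tilde\sigma$ in Step 1, coincides with the Mukai form up to a nonzero scalar.

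The main obstacle is Step 2 together with the transport in Step 3. One must construct the deformation to the elliptic model while preserving genericity of $\omega$ and primitivity of $v$, and control wall-crossing so that the resulting total space is a smooth proper family. This is where O'Grady's and Yoshioka's detailed analysis is needed, and it is what we would import rather than redo.
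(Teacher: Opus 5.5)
Your proposal matches the paper's treatment: the paper gives no argument of its own for this statement and simply imports it from O'Grady's Main Theorem (the general primitive case is due to Yoshioka), which is what you propose. Your outline is the standard route: Mukai's unobstructedness and symplectic form, deformation to a Hilbert scheme via a relative Fourier--Mukai transform on an elliptic K3, and transport of the Mukai isometry $\lambda$. One point to tighten: if the elliptic model is only birational to $\mathrm{Hilb}^n(S')$, deduce $\pi_1=0$ and $h^{2,0}=1$ from birational invariance (or Huybrechts' theorem on birational hyperk\"ahler manifolds), rather than asserting deformation equivalence directly.
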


\begin{theorem}[{\cite[Lemma 4.1]{Per}}]\label{Per 4.1}
Let the assumptions be as above. For a K3 surface $S$, a certain family $\mathcal S\to \Delta\subset \C^{N}$ of K3 surfaces with fixed K\"ahler class $\omega$ and Mukai vector $v$ containing $S_{0}=S$ induces a family $\mathcal M \to \Delta $ of compact holomorphically symplectic manifolds $M_{t}=M_{v}(S_{t},\omega)$ containing $M_{0}=M_{v}(S,\omega)$. Moreover there exists a dense subset $\Delta_{1}\subset \Delta$ such that $M_{t}=M_{v}(S_{t},\omega)$ is hyperk\"ahler for $t\in \Delta_{1}$.
\end{theorem}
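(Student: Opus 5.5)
The plan is to build $\mathcal S\to\Delta$ as the sublocus of the Kuranishi family of $S$ on which the relevant classes stay algebraic. I would then construct the relative moduli space over it, and finally produce the dense set $\Delta_1$ by approximating $\omega$ by rational classes lying in the same $v$-chamber.

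\textbf{The family $\mathcal S\to\Delta$.} Let $\mathcal K\to\Delta_{20}$ be the Kuranishi family of $S$, with period map a local isomorphism as recalled in the new proof of Theorem \ref{Siu thm}. Let $\Delta\subset\Delta_{20}$ be the locus where both $\xi$ and $\omega$ stay of type $(1,1)$, i.e.\ where the period $\sigma_t$ satisfies $(\sigma_t,\xi)=(\sigma_t,\omega)=0$. By local Torelli this is a smooth submanifold of codimension at most $2$. By Kähler stability in the sense of Theorem \ref{global Kahler}, or classical Kodaira--Spencer stability, $\omega$ remains a Kähler class on $S_t$ after shrinking $\Delta$.

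\textbf{Genericity and the relative moduli space.} The $v$-walls in the positive cone of $S_t$ are of the form $\{\alpha:(\alpha,D)=0\}$, with $D$ running over $(1,1)$-classes of bounded negative square determined by $v$, and they form a locally finite set. Since $\omega$ is $v$-generic on $S_0$, it therefore lies in an open chamber $\mathcal C$ that persists on nearby fibres. Hence $\omega$ is $v$-generic on $S_t$ after shrinking $\Delta$. For $\mu_\omega$-stable $F$, stability and Serre duality give $\mathrm{Ext}^2_0(F,F)=0$. The relative moduli space of Perego--Toma then gives a proper map $\mathcal M\to\Delta$ whose fibres are $M_v(S_t,\omega)$, each a compact holomorphically symplectic manifold of dimension $v^2+2$ by Theorem \ref{PT 1.1}. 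Unobstructedness and constant fibre dimension make $\mathcal M\to\Delta$ smooth.

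\textbf{The dense subset $\Delta_1$.} I would take rational classes $h\in H^2(S,\mathbb Q)$ close to $\omega$, lying in the open chamber $\mathcal C$, with $h^2>0$. The locus $\Delta_h=\{t:(\sigma_t,h)=0\}$ is a hypersurface in $\Delta$. Because rational classes are dense near $\omega$, the union $\Delta_1=\bigcup_h\Delta_h$ is dense, by the same argument that makes the $D_\ell$ dense in the Siu proof. For $t\in\Delta_h$ I would argue as follows.
\begin{enumerate}
\item A multiple of $h$ is an integral $(1,1)$-class of positive square, so $S_t$ is projective.
\item Since $h$ is close to the Kähler class $\omega$ and avoids the finitely many relevant $(-2)$-walls nearby, $h$ is ample.
\item Since $h$ and $\omega$ lie in the same $v$-chamber, $\mu_h$-stability and $\mu_\omega$-stability agree for sheaves with Mukai vector $v$, so $M_v(S_t,\omega)=M_v(S_t,h)$.
\item This moduli space is hyperkähler by Theorem \ref{OG main}.
\end{enumerate}

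\textbf{Main obstacle.} The delicate step is the uniform wall control along the family. One needs the $v$-walls, and the $(-2)$-walls bounding the Kähler cone, to be locally finite uniformly in $t$ near $0$. This is what guarantees that a single chamber $\mathcal C$ contains $\omega$ on all $S_t$ and that a rational $h$ near $\omega$ is simultaneously ample and $v$-equivalent to $\omega$. I would first try the Bogomolov-type bound: destabilizing subsheaves yield classes $D$ with $-\tfrac{r^2}{4}v^2\le D^2<0$. Only finitely many such integral classes meet a compact neighbourhood of $\omega$ in $H^2(S,\mathbb R)$, which gives the required uniformity.
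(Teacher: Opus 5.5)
Your overall strategy matches Perego's Lemma 4.1, which the paper cites without proof: restrict the Kuranishi family to the locus where $\xi$ and $\omega$ stay of type $(1,1)$, take the relative moduli space, and compare with polarized moduli spaces on a dense set of projective fibres. However, the step you yourself flag as the main obstacle rests on a false statement.

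\textbf{The finiteness claim.} It is not true that only finitely many integral $D$ with $-\frac{r^2}{4}v^2\le D^2<0$ have $D^{\perp}$ meeting a compact neighbourhood of $\omega$ in $H^2(S,\mathbb R)$. The lattice is indefinite, so a bound on $D^2$ does not bound $D$: if $e$ is isotropic with $(D,e)=0$, every $D+ne$ has the same square. Moreover $\omega^{\perp}$ has signature $(2,19)$, so it contains isotropic directions, and such hyperplanes accumulate at $\omega$. As written, you have therefore established neither that $\omega$ stays Kähler and $v$-generic on $S_t$, nor that your chamber $\mathcal C$ exists (a region where a rational $h$ is simultaneously ample and $v$-equivalent to $\omega$ on every $S_t$).

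\textbf{How to repair it.} The missing input is that only classes in $\mathrm{NS}(S_t)$ matter.
\begin{enumerate}
\item Suppose $D\perp\sigma_t$ and $D\perp\alpha$ for some $\alpha\in H^{1,1}(S_t,\mathbb R)$ in a small compact ball $K$ around $\omega$. Then $D$ is orthogonal to the positive definite $3$-plane $\langle \mathrm{Re}\,\sigma_t,\mathrm{Im}\,\sigma_t,\alpha\rangle$.
\item For $t$ in a relatively compact part of $\Delta$ these planes form a compact family. Hence $-D^2\ge c\|D\|^2$ for a fixed Euclidean norm, and only finitely many $D$ occur.
\item For each such $D$ with $(D,\omega)=0$, the closed locus $\{t:(\sigma_t,D)=0\}$ misses $0$. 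This holds by $v$-genericity on $S_0$, and for $D^2=-2$ because $\pm D$ would be effective on $S_0$. So shrinking $\Delta$ removes it.
\item Shrinking $K$ then gives a convex ball $U\ni\omega$ met by no $(-2)$-wall and no $v$-wall of any $S_t$.
\end{enumerate}
This is what makes $\omega$ a $v$-generic Kähler class on every fibre, which is needed for $\mathcal M\to\Delta$ to exist at all. It is also what makes your steps 2 and 3 valid for rational $h\in U$. Note that Kodaira--Spencer stability alone only produces Kähler classes on $S_t$ near $\omega$, not $\omega$ itself.

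\textbf{Density of $\Delta_1$.} This is not the Siu argument verbatim. On $\Delta$ one has $(\sigma_t,\omega)\equiv 0$, so $(\sigma_t,h)=(\sigma_t,h-\omega)$, and its value and derivative are both of size $|h-\omega|$. Rational $h$ close to $\omega$ therefore do not automatically give $\Delta_h$ near a prescribed point $t_1$. You must choose $h$ so that $(h-\omega)/|h-\omega|$ is close to a real class $u_0\in H^{1,1}(S_{t_1})$ outside $\mathrm{span}(\omega,\xi)$. Then apply the implicit function theorem to $\bigl(\sigma_t,(h-\omega)/|h-\omega|\bigr)=0$.

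\textbf{Comparison with the paper's route.} The paper's remark says Perego takes $\Delta_1$ to be the whole projective locus. There, given the uniform genericity above, the passage to an ample class can be made fibre by fibre:
\begin{enumerate}
\item Let $\omega_{\mathrm{NS}}$ be the orthogonal projection of $\omega$ to $\mathrm{NS}(S_t)_{\mathbb R}$. It pairs with $\mathrm{NS}(S_t)$ exactly as $\omega$ does, satisfies $\omega_{\mathrm{NS}}^2\ge\omega^2$, and is ample.
\item Hence $\mu_\omega$- and $\mu_{\omega_{\mathrm{NS}}}$-stability coincide.
\item One then approximates $\omega_{\mathrm{NS}}$ by a rational class inside its chamber in $\mathrm{NS}(S_t)_{\mathbb R}$.
\end{enumerate}
Your choice of $h$ near $\omega$ buys one polarization on a whole hypersurface, at the price of the uniform wall control in $U$.
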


In the proof of \cite[Lemma 4.1]{Per}, the dense subset $\Delta_{1}$ is taken so that $S_{t}$ is a projective K3 surface for $t\in \Delta_{1}$.

Let $\Delta_{\mathrm{HK}}$ be the subset of $\Delta$ consisting of points $t$ such that $M_{t}$ is hyperk\"ahler. From Theorem \ref{Per 4.1}, we see that $\Delta_{\mathrm{HK}}$ is an open dense subset of $\Delta$, which is not necessarily connected.

\begin{proposition}\label{bounded periods hk}
The open dense subset $\Delta_{\mathrm{HK}}$ has only finitely many connected components:
$$\Delta_{\mathrm{HK}}=\bigcup_{\mu}\Delta^{\mu}_{\mathrm{HK}}.$$
For each connected component $\Delta_{\mathrm{HK}}^\mu$, the closure $\bar{\Delta_{\mathrm{HK}}^\mu}$ contains $0$, and the corresponding period map 
$$\Phi^{\mu}:\, \Delta_{\mathrm{HK}}^{\mu}\to \Omega_{\Lambda},$$
extends to $\Phi^{\mu}(0)\in \Omega_{\Lambda}$.
 \end{proposition}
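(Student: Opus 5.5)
Throughout, shrink $\Delta$ so that the family $\mathcal M\to\Delta$ is a differentiably trivial family over a contractible polydisk. This gives a fixed marking $\phi_t:\,H^{2}(M_t,\mathbb Z)\to \Lambda:=H^{2}(M_0,\mathbb Z)$ for all $t\in\Delta$. The key observation is that the periods of the hyperk\"ahler fibers can be computed on the K3 side. Theorem \ref{PT 1.1}(ii) gives the isometries $\lambda_t:\,v^{\perp}\to H^{2}(M_t,\mathbb Z)$. Since the family is induced from $\mathcal S\to\Delta$ with fixed $v$, these isometries are compatible with parallel transport in the local systems $R^{2}$ of $\mathcal S$ and of $\mathcal M$; I would take this from the relative construction in \cite{Per}, using the universal-family construction of $\lambda$. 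For $t\in\Delta_{\mathrm{HK}}$, Theorem \ref{OG main} shows that $\lambda_t$ is a Hodge isometry at the projective points $\Delta_1$. Both period maps are holomorphic on $\Delta_{\mathrm{HK}}$: on the $M$ side by local Torelli for hyperk\"ahler manifolds, and on the $S$ side by the K3 period map. They agree on the dense set $\Delta_1\cap\Delta^{\mu}_{\mathrm{HK}}$, so they agree on all of $\Delta^{\mu}_{\mathrm{HK}}$. Hence, on each component,
\begin{equation*}
\Phi^{\mu}(t)=\phi_t\bigl(\lambda_t((0,\sigma_t,0))\bigr),
\end{equation*}
where $\sigma_t$ generates $H^{2,0}(S_t)$.

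The right-hand side is defined for \emph{every} $t\in\Delta$, not only on $\Delta_{\mathrm{HK}}$, and it is holomorphic in $t$ because $t\mapsto[\sigma_t]$ is the period map of the K3 family. Its value at $0$ is $\phi_0\lambda_0((0,\sigma_0,0))$. This lies in $\Omega_\Lambda$: the Mukai pairing gives $(0,\sigma_0,0)^2=\sigma_0^2=0$ and $(\sigma_0,\bar\sigma_0)>0$, and $\lambda$ is an isometry onto $(H^{2},\tilde q)$. Therefore $\Phi^{\mu}$ extends to $0$ with $\Phi^{\mu}(0)\in\Omega_\Lambda$. The extension is the same for all $\mu$, which also shows that the period points near $0$ stay in a compact subset of $\Omega_\Lambda$.

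For the remaining claims, $0\in\bar{\Delta^{\mu}_{\mathrm{HK}}}$ and finiteness of the components, I would argue as follows. The complement $\Delta\setminus\Delta_{\mathrm{HK}}$ is contained in the locus where $M_t$ fails to be K\"ahler. By Theorem \ref{global Kahler}, K\"ahlerness propagates from any hyperk\"ahler fiber over the whole region $B_{t,c_0}$. This region has a uniform size, since $c_0$ depends only on the Hodge numbers, which are constant here, and because by the first two steps the Beltrami differentials are controlled uniformly, as in Section \ref{proof of main}. So each component of $\Delta_{\mathrm{HK}}$ contains a ball of uniform radius. After shrinking $\Delta$ to a smaller concentric polydisk, only finitely many components can meet it, and discarding those whose closure misses $0$ (possible after further shrinking) gives the statement.

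I expect the main obstacle to be the finiteness and closure step. The period extension itself is essentially formal once the compatibility of $\lambda_t$ with the family is granted. Making the uniform-radius statement rigorous requires the uniform operator bounds of Lemmas \ref{mt lem2} and \ref{mt lem3} along a sequence of hyperk\"ahler fibers. Proving those bounds may in turn need the periods to be bounded, which is exactly what the period extension step supplies, so the order of the steps matters.
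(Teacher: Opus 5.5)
Your period-extension step is essentially the paper's proof. The paper also writes $\Phi^{\mu}(t)=\phi_t\bigl(\lambda_t((0,\sigma_t,0))\bigr)$, notes that the right-hand side is defined for every $t\in\Delta$, and checks $\Phi^{\mu}(0)\in\Omega_\Lambda$ from the isometry property of $\lambda_0$. Your version is more careful. The paper applies Theorem~\ref{OG main} on all of $\Delta^{\mu}_{\mathrm{HK}}$, although it is only available at projective points. You instead pass from $\Delta_1\cap\Delta^{\mu}_{\mathrm{HK}}$ to $\Delta^{\mu}_{\mathrm{HK}}$ by density and continuity; both you and the paper take the flatness of $\lambda_t$ for granted.

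The genuine gap is in your finiteness/closure step.
\begin{itemize}
\item Theorem~\ref{global Kahler} gives a uniform threshold $c_0$ for the sup-norm of Beltrami differentials on $M_{t_0}$, measured in a coordinate cover of $M_{t_0}$. It says nothing about the size of $B_{t_0,c_0}$ in the coordinate $t$ of $\Delta$, and nothing controls that size as $t_0$ approaches $\Delta\setminus\Delta_{\mathrm{HK}}$.
\item Lemmas~\ref{mt lem2} and~\ref{mt lem3} do not supply this. They hold only for fibers of one fixed smooth family $\mathcal Y\to\Delta_{\delta'}$ of Calabi--Yau manifolds with smoothly varying Ricci-flat metrics. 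The proof of Theorem~\ref{main1-prime} extracts from them, by contradiction, only the qualitative fact $0\in U^{(i)}$ for large $i$, not a radius in the base.
\item Your claim is too strong to serve as an intermediate step. If every hyperk\"ahler fiber propagated K\"ahlerness over a $t$-ball of fixed radius, density of $\Delta_{\mathrm{HK}}$ would immediately make $\Delta_{\mathrm{HK}}$ contain a neighbourhood of $0$. That is already the conclusion of Theorem~\ref{app3 main}, which this proposition is supposed to feed.
\item The ``shrink and discard'' step fails as stated. The components of $\Delta_{\mathrm{HK}}\cap\Delta'$ for a smaller polydisk $\Delta'$ are not restrictions of components of $\Delta_{\mathrm{HK}}$: one component can break into many, even infinitely many, pieces. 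So neither finiteness nor the property $0\in\overline{\Delta^{\mu}_{\mathrm{HK}}}$ survives shrinking. Moreover, for a fixed $\Delta$ the closure claim concerns every component, so you cannot discard any.
\end{itemize}

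The paper does not route this through Theorem~\ref{global Kahler}. It obtains finiteness from Verbitsky's theorem that $\mathfrak M_\Lambda$ has finitely many connected components and indexes the $\Delta^{\mu}_{\mathrm{HK}}$ accordingly. Strictly, this only sorts the components of $\Delta_{\mathrm{HK}}$ into finitely many classes. The paper gives no separate argument for $0\in\overline{\Delta^{\mu}_{\mathrm{HK}}}$. What is actually used afterwards, in deducing Theorem~\ref{app3 main} from Theorem~\ref{hk main}, is only the content of your first step. One needs a sequence in the dense set $\Delta_{\mathrm{HK}}$ tending to $0$ whose periods converge in $\Omega_\Lambda$, and the globally defined formula $\phi_t\lambda_t((0,\sigma_t,0))$ provides this directly. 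The finiteness and closure assertions can therefore be dispensed with there, rather than proved by a K\"ahler-stability argument.
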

\begin{proof} 
The family $\mathcal M \to \Delta \subset \C^{N}$ of compact holomorphically symplectic manifolds gives a marking
$$
\phi_{t}:\, H^{2}(M_{t},\mathbb Z)\to \Lambda =H^{2}(M_{0},\mathbb Z)
$$
on $M_{t}$ for $t\in \Delta$, where the nondegenerate quadratic form $\tilde q_{t}$ on $M_{t}$ is given by Theorem \ref{PT 1.1}. In particular, the open dense subset $\Delta_{\mathrm{HK}}$ parametrizes the marked hyperk\"ahler manifolds.

From Theorem 2.6 of \cite{VerbitskyErratum}, one sees that the number of connected components of the marked moduli space of hyperk\"ahler manifolds is finite, which divides $\Delta_{\mathrm{HK}}$ into a finite union of connected components.

When $t\in \Delta_{\mathrm{HK}}^{\mu}$, the quadratic form $\tilde q_{t}$ is the Beauville–Bogomolov form on the hyperk\"ahler manifold $M_{t}$. Then from Theorem \ref{OG main} we have the period map
$$\Phi^{\mu}:\, \Delta_{\mathrm{HK}}^{\mu}\to \Omega_{\Lambda},\, t\mapsto \phi_{t}\left(\lambda_{t}((0,\sigma_{t},0))\right),$$
where $\sigma_{t}\in H^{2,0}(S_{t})$ is the generator.

Since $\phi_{t}$ and $\lambda_{t}$ are defined for all $t\in \Delta$, we have that the period map $\Phi^{\mu}$ extends to
$$\Phi^{\mu}(0)=\phi_{0}(\lambda_{0}((0,\sigma_{0},0))).$$

We claim that
\begin{equation}\label{claim bp}
 \Phi^{\mu}(0)\text{ lies in } \Omega_{\Lambda}.
\end{equation}
In fact, Theorem \ref{PT 1.1} implies that $\lambda_{0}$ is an isometry, and hence
\begin{eqnarray*}
\tilde q_{0}\bigl(\lambda_{0}((0,\sigma_{0},0)),\lambda_{0}((0,\sigma_{0},0))\bigr)&=&(\sigma_{0},\sigma_{0})_{S}=0,\\
\tilde q_{0}\bigl(\lambda_{0}((0,\sigma_{0},0)),\bar{\lambda_{0}((0,\sigma_{0},0))}\bigr)&=&(\sigma_{0},\bar{\sigma_{0}})_{S}>0.\\
\end{eqnarray*}
This proves \eqref{claim bp}, which finishes the proof of the proposition.
\end{proof}

Now we can prove the main result of this section, which gives an affirmative answer to Conjecture 1.1 in \cite{Per}.

%\begin{theorem}\label{app3 main}
%For a fixed Mukai vector
%$$
%v=(r,\xi,a)\in H^{2*}(S,\mathbb Z),
%$$
%where $\xi\in \mathrm{NS}(S)$, $r>1$ is prime to $\xi$, and $v^{2}\geq 0$. 
%Suppose that the K\"ahler class $\omega$ is $v$-generic. Then the moduli space
%$M_v(S,\omega)$ of $\mu_\omega$-stable coherent sheaves $F$ on $S$ satisfying
%$v(F)=v$ is a hyperk\"ahler manifold.
%\end{theorem}•

\begin{theorem}\label{app3 main}
Let $S$ be a K3 surface with a K\"ahler class $\omega$ and a Mukai vector
$$
v=(r,\xi,a)\in H^{2*}(S,\mathbb Z),\, v^{2}\geq 0
$$
where $\xi\in \mathrm{NS}(S)$, $r>1$ is prime to $\xi$, and $\omega$ is $v$-generic. 
Then the moduli space
$M_v(S,\omega)$ of $\mu_\omega$-stable coherent sheaves on $S$ with Mukai vector $v$ is a hyperk\"ahler manifold.
\end{theorem}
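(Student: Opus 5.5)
The plan is to realize $M_v(S,\omega)$ as the central fiber of a family whose nearby fibers are hyperk\"ahler with bounded periods, and then invoke Theorem \ref{hk main}. As noted before Theorem \ref{OG main}, the cases $v^{2}\le 0$ are trivial, so I assume $v^{2}>0$ throughout.

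First I take the family $\mathcal S\to\Delta$ of K3 surfaces from Theorem \ref{Per 4.1}, with fixed $\omega$ and $v$. It induces a family $\mathcal M\to\Delta$ of compact holomorphically symplectic manifolds $M_t=M_v(S_t,\omega)$ with $M_0=M_v(S,\omega)$, and the subset $\Delta_{\mathrm{HK}}$ of hyperk\"ahler fibers is open and dense. By Proposition \ref{bounded periods hk}, $\Delta_{\mathrm{HK}}$ has finitely many components $\Delta^{\mu}_{\mathrm{HK}}$, each accumulating at $0$. On each component the period map $\Phi^{\mu}(t)=\phi_t(\lambda_t((0,\sigma_t,0)))$ extends continuously to $0$, with $\Phi^{\mu}(0)\in\Omega_\Lambda$. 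So I fix one component and choose a sequence $t^{(i)}\in\Delta^{\mu}_{\mathrm{HK}}$ converging to $0$. Continuity of $t\mapsto \phi_t\lambda_t((0,\sigma_t,0))$ on all of $\Delta$ then shows that the periods $\Phi^{\mu}(t^{(i)})$ converge to the point $\Phi^{\mu}(0)$ of $\Omega_\Lambda$. Hence they lie in a compact subset of the period domain, namely the sequence together with its limit.

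Next I pass to the Kuranishi family $\pi:\X\to B$ of $X=M_0$. By universality, the restriction of $\mathcal M\to\Delta$ near $0$ is pulled back from $\pi$ via a holomorphic map $\Delta\to B$ sending $0\mapsto 0$. The points $t^{(i)}$ then map to points of $B$ converging to $0$ whose fibers are hyperk\"ahler. The markings induced by $\pi$ agree with the markings $\phi_t$ coming from $\mathcal M$, since both come from the same local system on a contractible base, so the period boundedness carries over. Theorem \ref{hk main} now shows that $M_0$ is K\"ahler. By Theorem \ref{PT 1.1}(i), $M_0$ is a compact connected holomorphically symplectic manifold, so the last clause of Theorem \ref{hk main} gives that $M_0$ is hyperk\"ahler.

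The main obstacle is the final upgrade from ``K\"ahler and holomorphically symplectic'' to ``hyperk\"ahler'', which also needs simple connectivity and $h^{2,0}=1$. I would handle this via deformation equivalence with a Hilbert scheme, using Theorem \ref{PT 1.1}(i). Simple connectivity is topological and so is inherited from the diffeomorphic fibers $M_{t^{(i)}}$. For $h^{2,0}$, once $M_0$ is K\"ahler the Hodge numbers are upper semicontinuous and $b_2$ is constant, so the Hodge decomposition on $M_0$ and the Hodge numbers of the nearby hyperk\"ahler fibers force $h^{2,0}(M_0)=1$. A secondary point to check is that the compactness hypothesis of Theorem \ref{hk main} is formulated with the Kuranishi marking. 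This is handled by the local-system argument above.
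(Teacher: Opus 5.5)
Your proposal is correct and follows the paper's route: the paper's proof is the one-line combination of Proposition~\ref{bounded periods hk}, which gives a component of $\Delta_{\mathrm{HK}}$ accumulating at $0$ with periods converging into $\Omega_\Lambda$, and Theorem~\ref{hk main}. You also make explicit three points the paper leaves implicit, and each is handled soundly: the pullback of the family to the Kuranishi base of $M_0$, the compatibility of the markings via the local system, and the upgrade to hyperk\"ahler (simple connectivity is topological, and $h^{2,0}(M_0)=1$ follows from upper semicontinuity together with the Hodge decomposition on the now-K\"ahler fiber $M_0$).
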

\begin{proof}
The theorem follows immediately from Theorem \ref{hk main} and Proposition \ref{bounded periods hk}.
\end{proof}

%%%%%%%%%%%%%%%%%%%%%%%%%%%%%%%%%%%%%%%%%%%%%%%%%%%%%%%%%%%%%%%
%%%%%%%%%%%%%%%%%%%%%%%%%%%%%%%%%%%%%%%%%%%%%%%%%%%%%%%%%%%%%%%
%%%%%%%%%%%%%%%%%%%%%%%%%%%%%%%%%%%%%%%%%%%%%%%%%%%%%%%%%%%%%%% 

%%%%%%%%%%%%%%%%%%%%%%%%%%%%%%%%%%%%%%%%%%%%%%%%%%%%%%%%%%%%%%%
%%%%%%%%%%%%%%%%%%%%%%%%%%%%%%%%%%%%%%%%%%%%%%%%%%%%%%%%%%%%%%%
%%%%%%%%%%%%%%%%%%%%%%%%%%%%%%%%%%%%%%%%%%%%%%%%%%%%%%%%%%%%%%% 

%%%%%%%%%%%%%%%%%%%%%%%%%%%%%%%%%%%%%%%%%%%%%%%%%%%%%%%%%%%%%%%
%%%%%%%%%%%%%%%%%%%%%%%%%%%%%%%%%%%%%%%%%%%%%%%%%%%%%%%%%%%%%%%
%%%%%%%%%%%%%%%%%%%%%%%%%%%%%%%%%%%%%%%%%%%%%%%%%%%%%%%%%%%%%%% 

%%%%%%%%%%%%%%%%%%%%%%%%%%%%%%%%%%%%%%%%%%%%%%%%%%%%%%%%%%%%%%%
%%%%%%%%%%%%%%%%%%%%%%%%%%%%%%%%%%%%%%%%%%%%%%%%%%%%%%%%%%%%%%%
%%%%%%%%%%%%%%%%%%%%%%%%%%%%%%%%%%%%%%%%%%%%%%%%%%%%%%%%%%%%%%% 

\vspace{+12 pt}

%\noindent Center of Mathematical Sciences, Zhejiang University, Hangzhou, Zhejiang 310027, China;\\
%Department of Mathematics, University of California at Los Angeles, Los Angeles, CA 90095-1555, USA\\
%\noindent e-mail: liu@math.ucla.edu, kefeng@cms.zju.edu.cn
%
%
%\vspace{+6pt}
%\noindent Center of Mathematical Sciences, Zhejiang University, Hangzhou, Zhejiang 310027, China \\
%\noindent e-mail: syliuguang2007@163.com

\end{document}